\newcommand{\eChar}{\begin{enumerate}[(i)]}
\newcommand{\eCharR}{\begin{enumerate}[(a)]}
\newcommand{\eBr}{\begin{enumerate}[(1)]}
\newcommand{\Abstract}
\theoremstyle{plain}
\newtheorem{lemma}{Lemma}[section]
\newtheorem{theorem}[lemma]{Theorem}
\newtheorem{proposition}[lemma]{Proposition}
\newtheorem{corollary}[lemma]{Corollary}
\theoremstyle{definition}
\newtheorem{conjecture}[lemma]{Conjecture}
\newtheorem{definition}[lemma]{Definition}
\newtheorem{remark}[lemma]{Remark}
\newtheorem{example}[lemma]{Example}
\numberwithin{equation}{section}
\numberwithin{subsection}{section}
\numberwithin{theorem}{section}
\numberwithin{lemma}{section}
\numberwithin{proposition}{section}
\numberwithin{example}{section}
\numberwithin{no}{section}
\numberwithin{rem}{section}
\numberwithin{defn}{section}
\numberwithin{corollary}{section}
\title
{
Ricci curvature, diameter and eigenvalues of amply regular graphs
}
\author[1]{Kaizhe Chen\thanks{Email: ckz22000259@mail.ustc.edu.cn}}
\author[2]{Chunyang Hu\thanks{Email: chunyanghu@mail.ustc.edu.cn}}
\author[3]{Shiping Liu\thanks{Email: spliu@ustc.edu.cn}}
\author[4]{Heng Zhang\thanks{Email: hengz@mail.ustc.edu.cn}}
\affil[1]{School of the Gifted Young, University of Science and Technology of China, Hefei}
\affil[2,3,4]{School of Mathematical Sciences, University of Science and Technology of China, Hefei}
\date{ }
\begin{document}

\maketitle

\thispagestyle{plain}

\begin{abstract}
Amply regular graphs are graphs with local distance-regularity constraints. In this paper, we prove a weaker version of a conjecture proposed by Qiao, Park, and Koolen on diameter bounds of amply regular graphs and make new progress on Terwilliger's conjecture on finiteness of amply regular graphs. Terwilliger's conjecture can be considered as a natural extension of the Bannai-Ito conjecture about distance-regular graphs confirmed by Bang, Dubickas, Koolen, and Moulton. As a consequence, we show that there are only finitely many amply regular graphs with parameters $(n,d,\alpha,\beta)$ satisfying $\alpha\leq 6\beta-9$. We achieve these results by a significantly improved Lin--Lu--Yau curvature estimate and new Bakry--\'Emery curvature estimates. We further discuss applications of our curvature estimates to bounding eigenvalues, isoperimetric constants, and expansion properties. In addition, we obtain a volume estimate, which is sharp for hypercubes. 
\iffalse
The Bannai-Ito conjecture claiming the finiteness of number of distance regular graphs with fixed vertex degree $d\geq 3$ has been confirmed by Bang, Dubickas, Koolen and Moulton. As an extension, it is natural to ask whether there are only finitely many amply regular graphs with fixed vertex degree, where amply regular graphs are graphs with only a local distance-regularity. In this paper, we prove a weaker version of a conjecture proposed by Qiao, Park and Koolen on diameter bounds of amply regular graphs and make a new progress on Terwilliger's conjecture on finiteness of amply regular graphs. As a consequence, we show that there are only finitely many amply regular graphs with parameters $(n,d,\alpha,\beta)$ satisfying $\alpha\leq 6\beta-9$. We achieve these results by a significantly improved Lin--Lu--Yau curvature estimate and new Bakry--\'Emery curvature estimates. We further discuss applications of our curvature estimates to bounding eigenvalues, isoperimetric constants and expansion properties. In addition, we obtain a volume estimate which is sharp for hypercubes. 
\fi
\end{abstract}

%%%%%%%%%%%%%%%%%%%%%%%%%%%%%%%%%%%%%%%%%%%%%%%%%%%%%
\section{Introduction}
%%%%%%%%%%%%%%%%%%%%%%%%%%%%%%%%%%%%%%%%%%%%%%%%%%%%%

It was one of the main open problems about distance regular graphs to show that there are only finitely many of them with fixed vertex degree $d\geq 3$ \cite{Terwilliger82, Terwilliger83}. This was later known as the Bannai-Ito conjecture, referring to their 1984 book \cite[p.237]{BI} and has been confirmed in 2015 by Bang, Dubickas, Koolen and Moulton \cite{BDKM15}. The study of Bannai-Ito conjecture has particularly stimulated a very fruitful approach of bounding the diameter of a distance regular graph in terms of its vertex degree, see, e.g., \cite{BCN89,Hiraki07, Mulder79, NS22JCTB,NS22, Smith74,Terwilliger82, Terwilliger83}.
 
During the process of investigating the diameter bounds of distance regular graphs, it was realized that some diameter bounds hold even for graphs with only a fraction of the regularity properties distance-regular graphs possess. For example, Mulder \cite{Mulder79} obtained in 1979 a diameter bound for $(0,\lambda)$-graphs.
A $(0,\lambda)$-graph is a connected  graph such that any two distinct vertices have either $0$ or $\lambda$ common neighbors. 
Terwilliger \cite{Terwilliger83} derived in 1983 a diameter bound for $(s,c,a,k)$-graphs. In case of $s=2$, any $(2,c,a,k)$-graph is amply regular. An amply regular graph with parameters $(n, d,$ $\alpha, \beta)$ is a $d$-regular graph with $n$ vertices such that any two adjacent vertices have $\alpha$ common neighbors, and any two vertices at distance $2$ have $\beta$ common neighbors.
%In case that $s=2$, a $(2,c,a,k)$-graph is a connected amply regular graph with parameter $(n,k,a,c)$, i.e., a $d$-regular graph such that any two adjacent vertices have $a$ common neighbors and any two vertices at distance $2$ have $c$ common neighbors. The parameter $n$ is the number of vertices. We allow $n=\infty$ here. 
%Recently, Neumaier and Penji\'{c} \cite{NS22} raised a question of bounding the diameter of a distance regular graph with a small initial part of its intersection array. In particular, they asked for diameter bounds in term of the first four numbers of the intersection array. Since the amply regular graph is just defined by the first four numbers of the intersection array, we directly consider diameter bounds for amply regular graphs.
%Notice that the existence of a diameter bound implies that there are finitely many amply regular graphs with fixed $d$. Since amply regular graph is a generalization of distance regular graph, the above question is a stronger version of the Bannai-Ito conjecture.

In this paper, we study diameter bounds for amply regular graphs with parameters $(n,d,\alpha,\beta)$ with $n$ being possibly infinity. Below we recall briefly several known diameter bounds and related open questions for an amply regular graph $G$.  
Inspired by the diameter bound due to Mulder for $(0,\lambda)$-graphs, Brouwer, Cohen and Neumaier \cite[Corollary 1.9.2]{BCN89} prove that 
\begin{equation}\label{eq:BCN}
    \mathrm{diam}(G)\leq \max\{3, d-2\beta+4\},\,\,\text{if}\,\,1\neq \beta\geq \alpha.
\end{equation}
Terwilliger's diameter bounds in \cite[Theorem 3.4]{Terwilliger83} imply particularly that 
\begin{align}
    &\mathrm{diam}(G)\leq d, \,\,&\text{if}\,\, 2=\beta\geq \alpha,\label{eq:Terwilliger1}\\
    &\mathrm{diam}(G)\leq \max\left\{5, \frac{2\beta(d-1)}{3\beta-2}-2\beta+7\right\},\,\,&\text{if}\,\, \beta\geq \max\{3,\alpha\}.\label{eq:Terwilliger2}
\end{align}
Notice that the diameter bound \eqref{eq:Terwilliger2} is stronger than \eqref{eq:BCN} for large $d$. Particularly, we point out that the coefficient of $d$ in \eqref{eq:Terwilliger2} satisfies $1>\frac{2\beta}{3\beta-2}>\frac{2}{3}$.
Terwilliger \cite[p.160]{Terwilliger83} further proposed the following conjecture. 
\begin{conjecture}[Terwilliger]\label{conj:Terwilliger}
    Any amply regular graph with parameters $(n,d,\alpha,\beta)$ with $\beta\geq 2$ is always finite. 
    %i.e., $n<\infty$. 
\end{conjecture}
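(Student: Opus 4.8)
The plan is to reduce the finiteness statement to a diameter bound. Since an amply regular graph with parameters $(n,d,\alpha,\beta)$ is $d$-regular, any bound of the form $\mathrm{diam}(G)\le D(d,\beta)$ immediately forces $n\le 1+d\sum_{i=0}^{D-1}(d-1)^i<\infty$, so it suffices to bound the diameter in terms of the degree $d$ and $\beta$ alone. In the ``thin'' regime $\beta\ge\alpha$ such a bound is already available from \eqref{eq:BCN}, \eqref{eq:Terwilliger1} and \eqref{eq:Terwilliger2}; the real content of Conjecture~\ref{conj:Terwilliger} lies in the ``triangle-rich'' regime, where $\alpha$ may be arbitrarily large compared with $\beta$, and a new mechanism is needed.

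My main tool would be discrete Ricci curvature together with a Bonnet--Myers-type theorem: if every edge $xy$ of $G$ satisfies $\kappa_{\mathrm{LLY}}(x,y)\ge K>0$ for the Lin--Lu--Yau curvature, then $\mathrm{diam}(G)\le 2/K$, and an analogous diameter bound holds under a positive Bakry--\'Emery curvature lower bound on graphs of bounded degree. So the central step is to prove a uniform lower bound $\kappa_{\mathrm{LLY}}(x,y)\ge c(d,\beta)>0$ (or the Bakry--\'Emery analogue) for every edge of any amply regular graph with $\beta\ge2$. Concretely, for a fixed edge $xy$ one builds an explicit optimal transport coupling between the uniform (lazy) probability measures on the $1$-balls around $x$ and around $y$: the $\alpha$ common neighbors are transported to themselves at zero cost, and the vertices of $N(x)\setminus(N(y)\cup\{y\})$ are matched to those of $N(y)\setminus(N(x)\cup\{x\})$. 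The point is that each $u\in N(x)$ at distance $2$ from $y$ has exactly $\beta$ common neighbors with $y$, which supplies many paths of length $\le 2$ and lets one route mass cheaply; quantifying this matching is precisely where $\alpha$ and $\beta$ enter, and pushing the estimate as far as it will go yields positivity in a range of the shape $\alpha\le 6\beta-9$. To cover more of the conjecture one would iterate this with the Bakry--\'Emery curvature, which is more sensitive to the local graph structure and can sometimes salvage positivity where the transport argument fails.

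The main obstacle is that this one-local curvature computation degenerates once $\alpha$ is large relative to $\beta$: then $N(x)$ is nearly a clique, its induced subgraph carries almost no $\beta$-information, and the natural coupling only gives $K\le0$, so no diameter bound results. Overcoming this seems to require either a genuinely non-local (multi-step ``path'') curvature notion with its own Bonnet--Myers theorem, or a finer structural analysis of the highly constrained graph induced on $N(x)$ when $\alpha$ is large, or a departure from curvature entirely --- for instance an eigenvalue-interlacing argument in the spirit of Bang, Dubickas, Koolen and Moulton that bounds the number of distinct adjacency eigenvalues, fed into the volume and expansion estimates developed here. A complementary fallback is to combine a spectral-gap lower bound (obtained from the same curvature estimate via a discrete Lichnerowicz inequality) with a volume-growth upper bound to force $n<\infty$ directly; but this too stalls for large $\alpha$, which is exactly why only the partial statement under $\alpha\le 6\beta-9$ is within reach.
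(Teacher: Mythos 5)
This statement is a \emph{conjecture}; the paper does not prove it, and you correctly do not claim to either. What you have written is a plan for the partial result the paper actually establishes (Theorem \ref{thm:finite_ARG}, finiteness when $\alpha\le 6\beta-9$), and your plan matches the paper's route in its broad strokes: a Bonnet--Myers reduction of finiteness to a positive discrete-curvature lower bound, a Lin--Lu--Yau estimate built from a local matching between $N_x$ and $N_y$ exploiting the $\beta$ common neighbors, and a Bakry--\'Emery computation to push beyond $\beta\ge\alpha$. You also correctly identify the genuine obstruction (large $\alpha$ relative to $\beta$) that keeps the full conjecture open.

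Two points where your account of the mechanism differs from what the paper does. First, the transport/matching argument is \emph{not} what produces the range $\alpha\le 6\beta-9$: the Lin--Lu--Yau estimate (Theorem \ref{thm:beta_alpha_1}, proved by applying K\"onig's theorem to an auxiliary $(\beta-1)$-regular bipartite multigraph with $\alpha(\beta-\alpha)$ parallel edges) only gives a positive bound in the regime $1\ne\beta\ge\alpha$, where Terwilliger had already settled finiteness. The range $\alpha\le 6\beta-9$, which allows $\alpha>\beta$, comes entirely from the Bakry--\'Emery side: the paper derives an \emph{exact formula} for $K_{BE}(+,x)$ in terms of the spectrum of the local graph on $S_1(x)$ (Theorem \ref{thm:ARGCurvature}), and then combines it with the structural dichotomy of \cite[Theorem 1.2.3]{BCN89} --- either $d\ge 3(\alpha-\beta+2)$, or $G$ is the icosahedron or a line graph (forcing $\beta=1$) --- to get $K_{BE}(+,x)>\tfrac12$ whenever $\alpha\le 6\beta-9$. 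Your proposal treats Bakry--\'Emery as a vague ``sometimes salvages positivity'' backup, whereas in the paper it is the load-bearing ingredient, and the degree lower bound from BCN is an essential external input you do not mention. Second, finiteness of the conjecture's graphs for \emph{fixed} degree is immediate from any diameter bound as you say, but the conjecture as stated asks for finiteness of each individual graph (with $n$ possibly infinite a priori), which is exactly what the discrete Bonnet--Myers theorems (Theorems \ref{LLYBM} and \ref{BEBM}) deliver from a positive curvature bound on a locally finite graph; your reduction is consistent with this but worth stating carefully.
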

We mention that both the diameter bounds \eqref{eq:Terwilliger1}-\eqref{eq:Terwilliger2} and the conjecture of Terwilliger are made for $(s,c,a,k)$-graphs. We reduce them to amply regular graphs here. Terwilliger's diameter bounds actually confirm Conjecture \ref{conj:Terwilliger} for amply regular graphs with $1\neq \beta\geq \alpha$.  
%A related natural question is whether or not there are only finitely many amply regular graphs with $\beta\geq 2$ and fixed degree.

 Qiao, Park, and Koolen \cite{QPK19} observed from Terwilliger's diameter bounds that any connected amply regular graph with parameters $(n,d,\alpha,\beta)$ such that $\beta>\frac{d}{3}\geq 1$ and $\beta\geq \alpha$ has diameter at most $7$. Based on this phenomenon, they \cite[Conjecture 1.2]{QPK19} proposed the following conjecture.

 \begin{conjecture}[Qiao, Park and Koolen] \label{conj:QPK} Let $G$ be a connected amply regular graph with parameters $(n,d,\alpha,\beta)$ and $0<\varepsilon<1$ be a real number. Then there exists a constant $K(\varepsilon)$ such that, if $\beta>\varepsilon d\geq 1$ and $\beta\geq \alpha$ all hold, then the diameter of $G$ is at most $K(\varepsilon)$.
 \end{conjecture}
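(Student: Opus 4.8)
The plan is to bypass the combinatorial bookkeeping of Terwilliger's method and instead deduce the bound from discrete Ricci-curvature estimates fed into Bonnet--Myers-type diameter bounds. I would separate two regimes according to the size of $d$. If $d$ stays bounded in terms of $\varepsilon$, then $\beta>\varepsilon d\geq1$ forces $\beta$, $\alpha$ (and $d$) to be bounded, and since $\beta\geq\alpha$ and $\beta\geq 2$ the classical estimates \eqref{eq:Terwilliger1}--\eqref{eq:Terwilliger2} already yield $\mathrm{diam}(G)\leq K_0(\varepsilon)$. The genuinely new case is $d$ large with $\beta>\varepsilon d$, where \eqref{eq:Terwilliger2} degrades (its coefficient of $d$ never drops below $2/3$); here I would use curvature.

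First I would prove a sharpened Lin--Lu--Yau (Ollivier--Ricci) estimate $\kappa(x,y)\geq\Phi(d,\alpha,\beta)$ for every edge $xy$, with $\Phi$ an explicit rational function. The input is the local combinatorics of an amply regular graph: the edge $xy$ lies in exactly $\alpha$ triangles, each of $x,y$ has $d-1-\alpha$ private neighbours, and the number of $4$-cycles through $xy$ is $\alpha(\alpha-1)+(d-1-\alpha)(\beta-1)$, which grows linearly in $\beta$. One constructs an optimal transport plan between the lazy random-walk measures at $x$ and $y$ in which the shared diagonal mass and the common neighbours are matched to themselves while the private mass of $x$ is moved to the private mass of $y$ in a single step along these $4$-cycles, the existence of an efficient (near-)matching following from a Hall-type argument that uses the amply regular constraint to bound the relevant bipartite degrees from below. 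Tracking the cost gives a $\Phi$ that equals $2/d$ at $(\alpha,\beta)=(0,2)$ — sharp on the hypercube — and is strictly positive precisely in a range of the form $\alpha\leq 6\beta-9$. Strict positivity of $\kappa$ already yields a Terwilliger-type finiteness statement: $\mathrm{diam}(G)\leq 2/\inf_{xy}\Phi<\infty$, so $G$ is finite whenever $\alpha\leq 6\beta-9$.

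For the quantitative constant $K(\varepsilon)$ of Conjecture \ref{conj:QPK} a further ingredient is needed, because $\Phi$ is only of order $1/d$ in the hard regime: complete bipartite graphs $K_{m,m}$ (for which $\alpha=0$, $\beta=d=m$ and $\kappa=2/d$) show that $\kappa$ need not scale with $\beta$. I would therefore also establish Bakry--\'Emery curvature-dimension estimates — computing $\Gamma$ and $\Gamma_2$ at a vertex from $(d,\alpha,\beta)$ and the local $4$-cycle count — of the form $CD(K,N)$ with a finite dimension $N$ comparable to $d$ and $K$ comparable to $\beta$ up to a controlled multiple of $\alpha$, and then invoke a Bakry--\'Emery diameter bound of Liu--M\"unch--Peyerimhoff type, which gives $\mathrm{diam}(G)\lesssim N/K\lesssim d/\beta<1/\varepsilon$, independent of $d$; together with the bounded-$d$ case this produces a constant $K(\varepsilon)$. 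The hard part is precisely this Bakry--\'Emery estimate: the $\Gamma_2$ computation has to be carried out sharply enough that the positive term genuinely scales like $\beta$ while keeping $N$ finite and controlled, the $-\alpha$ and lower-order corrections being exactly what force a linear restriction such as $\beta\geq\alpha$ (or $\alpha\leq 6\beta-9$); without an optimal accounting one obtains only a weaker version of the conjecture — with a non-sharp constant, or under a mildly strengthened hypothesis — rather than the full statement.
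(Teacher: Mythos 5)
The statement you were asked to prove is an open conjecture: the paper does not prove it, and explicitly only establishes a weaker version (Corollary \ref{cor:QPK_weak}), which adds the hypothesis $\alpha>\varepsilon d$ and then gets $\mathrm{diam}(G)\le 4/\varepsilon$ from Theorem \ref{thm:main1}. Your sketch is honest that it only reaches ``a weaker form,'' and its first half does track the paper's actual machinery: a Lin--Lu--Yau lower bound via an efficient local matching of the private neighbours of $x$ and $y$ (the paper uses K\"onig's theorem on an auxiliary $(\beta-1)$-regular bipartite multigraph rather than a Hall-type argument, which is what produces the precise term $\lceil\alpha(\beta-\alpha)/(\beta-1)\rceil$), plus a Bakry--\'Emery computation and Bonnet--Myers. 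One small misattribution: the range $\alpha\le 6\beta-9$ comes from the Bakry--\'Emery estimate (Theorem \ref{thm:finite_ARG}), not from the Lin--Lu--Yau bound, which in the paper requires $1\neq\beta\ge\alpha$.

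The genuine gap is the step you yourself flag as ``the hard part'': a curvature--dimension estimate $CD(K,N)$ with $N$ comparable to $d$ and $K$ comparable to $\beta$, which would give $\mathrm{diam}(G)\lesssim N/K\lesssim 1/\varepsilon$. This cannot work. Since $\frac1N(\Delta f)^2\ge 0$, any $CD(K,N)$ with finite $N$ implies $CD(K,\infty)$, so $K\le K_{BE}(+,x)$; and the paper's exact formula (Theorem \ref{thm:ARGCurvature}) shows $K_{BE}(+,x)\le 2+\frac{\alpha}{2}$ for every amply regular graph, with equality in the regime $\beta\ge\alpha$. Hence the Bakry--\'Emery curvature never scales with $\beta$ when $\alpha$ is small: your own counterexample $K_{m,m}$ (where $\alpha=0$, $\beta=d=m$, $K_{BE}=2$) defeats the Bakry--\'Emery route exactly as it defeats the Lin--Lu--Yau route, since $\kappa_{LLY}\le(2+\alpha)/d$ as well. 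Both curvature notions are capped by $\alpha$, which is why the paper's weaker version must assume $\alpha>\varepsilon d$ rather than merely $\beta>\varepsilon d$; the genuinely hard case of the conjecture ($\alpha$ small, $\beta\sim\varepsilon d$, $d\to\infty$) is untouched by this entire framework, and your split into bounded versus large $d$ does not rescue it because the large-$d$ branch rests on the impossible estimate.
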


Recently, Huang, the third named author and Xia \cite{HLX24} show for an amply regular graph $G$ with parameters $(n,d,\alpha,\beta)$ that 
\begin{equation}\label{eq:HLX24}
    \mathrm{diam}(G)\leq \left\lfloor\frac{2d}{3}\right\rfloor, \,\,\text{if}\,\,\beta>\alpha\geq 1.
\end{equation}
%Notice that the bound \eqref{eq:HLX24} is sharp. The equality holds, e.g., for the 9-Paley graph, which is an amply regular graph parameters $(9,4,1,2)$ with diameter $2$.

In this paper, we establish the following diameter bound for amply regular graphs. 
\begin{theorem}\label{thm:main1}
    Let $G$ be an amply regular graph with parameters $(n,d,\alpha,\beta)$. Then the diameter of $G$ satisfies
    \begin{equation}\label{eq:main1}
 \mathrm{diam}(G)\leq \left\lfloor\frac{2d}{2+\max\left\{\frac{\alpha}{2},\left\lceil\frac{\beta-\alpha}{\beta-1}\alpha\right\rceil\right\}}\right\rfloor,\,\,\text{if}\,\,\beta\geq \max\{3,\alpha\}. 
    \end{equation}
\end{theorem}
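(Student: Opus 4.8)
The plan is to deduce Theorem~\ref{thm:main1} from a lower bound on the Lin--Lu--Yau Ricci curvature $\kappa$ by means of the discrete Bonnet--Myers theorem of Lin--Lu--Yau: if $\kappa(x,y)\ge k>0$ for every edge $xy$ of a (locally finite, connected) graph $G$, then $G$ is finite and $\operatorname{diam}(G)\le 2/k$, hence $\operatorname{diam}(G)\le\lfloor 2/k\rfloor$ since the diameter is an integer. Taking $k=\bigl(2+\max\{\tfrac\alpha2,\lceil\tfrac{\beta-\alpha}{\beta-1}\alpha\rceil\}\bigr)/d$ turns $\lfloor 2/k\rfloor$ into exactly the right-hand side of \eqref{eq:main1}, so it suffices to prove, for every edge $xy$ of an amply regular graph with $\beta\ge\max\{3,\alpha\}$, the curvature estimate $\kappa(x,y)\ge k$.

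To estimate $\kappa(x,y)$ I would invoke the idleness-function machinery of Bourne--Cushing--Liu--M\"unch--Peyerimhoff: for a $d$-regular graph the Ollivier curvature function $p\mapsto\kappa_p(x,y)$ is linear on $[\tfrac1{d+1},1]$, and therefore $\kappa(x,y)=\tfrac{d+1}{d}\bigl(1-W_1(\mu_x,\mu_y)\bigr)$, where $\mu_z$ is the uniform probability measure on the closed unit ball $B_1(z)=\{z\}\cup N(z)$ and $W_1$ is the $1$-Wasserstein distance. Fix an edge $xy$, put $\Delta:=N(x)\cap N(y)$ (so $|\Delta|=\alpha$), $A:=N(x)\setminus(\{y\}\cup N(y))$ and $B:=N(y)\setminus(\{x\}\cup N(x))$, so $|A|=|B|=d-1-\alpha=:m$. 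Since $B_1(x)\cap B_1(y)=\{x,y\}\cup\Delta$, the measures $\mu_x,\mu_y$ agree there, and Kantorovich--Rubinstein duality gives $W_1(\mu_x,\mu_y)=\tfrac1{d+1}\,T$, where $T$ is the minimal cost of transporting the unit mass on each vertex of $A$ onto $B$; as $A\cap B=\varnothing$ and the transportation polytope has integral optimum, $T=\min_\phi\sum_{u\in A}d(u,\phi(u))$ over bijections $\phi\colon A\to B$, and every such distance lies in $\{1,2,3\}$. Thus the whole problem collapses to a combinatorial one: exhibit a bijection $\phi\colon A\to B$ whose defect $\sum_{u\in A}\bigl(d(u,\phi(u))-1\bigr)$ is at most $\alpha-\max\{\tfrac\alpha2,\lceil\tfrac{\beta-\alpha}{\beta-1}\alpha\rceil\}=\min\{\lfloor\tfrac\alpha2\rfloor,\lfloor\tfrac{\alpha(\alpha-1)}{\beta-1}\rfloor\}$.

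The heart is therefore a matching statement for the bipartite graph $H$ on $A\cup B$ whose edges are the adjacencies of $G$. I would first read off the local identities from amply regularity: for $u\in A$ the $\beta$-condition on the distance-$2$ pair $\{u,y\}$ (whose common neighbours are $x$, together with $a_u:=|N(u)\cap\Delta|$ vertices of $\Delta$ and $\deg_H(u)=|N(u)\cap B|$ vertices of $B$) gives $\deg_H(u)=\beta-1-a_u$, while the $\alpha$-condition on the edge $ux$ gives $|N(u)\cap A|=\alpha-a_u$; symmetrically on the $B$-side; and the $\alpha$-condition on the edges $xz,yz$ for $z\in\Delta$ shows each $z$ has equally many neighbours in $A$ and in $B$, so the number $E$ of $\Delta$--$A$ edges satisfies $E=\sum_{u\in A}a_u\le\alpha(\alpha-1)$. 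Feeding this into Hall's deficiency theorem: for $S\subseteq A$, double counting the $S$--$B$ edges of $H$ and using $\deg_H(\cdot)\le\beta-1$ on the $B$-side yields $|N_H(S)|\ge|S|-\tfrac1{\beta-1}\sum_{u\in S}a_u\ge|S|-\tfrac{\alpha(\alpha-1)}{\beta-1}$, so $H$ has a matching covering all but at most $\lfloor\tfrac{\alpha(\alpha-1)}{\beta-1}\rfloor$ vertices of $A$; extending it to a bijection and routing the few leftover vertices through $\Delta$ at distance $2$ gives the $\beta$-branch of the bound. The $\tfrac\alpha2$-branch needs a different idea: when $\beta$ is close to $\alpha$ the graph $H$ can be too sparse for the deficiency bound, so one builds the transport plan more carefully, exploiting the per-vertex balance $|N(z)\cap A|=|N(z)\cap B|$ on $\Delta$ to send some pairs deliberately at distance $2$ through $\Delta$ and thereby free distance-$1$ slots for the rest, after which a sharper Hall/flow estimate bounds the residual defect by $\lfloor\tfrac\alpha2\rfloor$.

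The step I expect to be the main obstacle is exactly this $\tfrac\alpha2$-estimate in the regime $\alpha\le\beta\le 2\alpha-2$, together with the accompanying claim that the vertices left unmatched by a near-optimal matching of $H$ can always be paired at distance $2$ (rather than $3$) through the common-neighbour set $\Delta$; making the choice of matching and the rerouting through $\Delta$ mutually compatible is the delicate part. By contrast, the Bonnet--Myers reduction, the idleness-linearity identity, and the double counting giving the $\beta$-branch are routine once the local identities above are in hand.
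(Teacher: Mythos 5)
Your Bonnet--Myers reduction and your setup for the Lin--Lu--Yau estimate (idleness linearity, reduction to a bijection $N_x\setminus(\Delta\cup\{y\})\to N_y\setminus(\Delta\cup\{x\})$, the local degree identities, and the Hall-deficiency count giving a matching that misses at most $\lfloor\alpha(\alpha-1)/(\beta-1)\rfloor$ vertices) do align with the paper's strategy for the $\lceil(\beta-\alpha)\alpha/(\beta-1)\rceil$ branch of the maximum. But there are two genuine gaps. First, even in that branch your plan requires the unmatched vertices on the two sides to be pairable at distance $2$, and you give no argument for this; a leftover pair can a priori be at distance $3$ (nothing in amply regularity forces an unmatched $u$ and an unmatched $w$ to share a neighbour), which would double the defect and break the bound. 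The paper avoids this issue entirely: it builds a $(\beta-1)$-regular bipartite \emph{multigraph} on $(N_x\cup\Delta_{xy})\sqcup(N_y\cup\Delta'_{xy})$, where $\Delta'_{xy}$ is a copy of $\Delta_{xy}$ and each $z_i$ is joined to its copy $z_i'$ by $\beta-\alpha$ parallel edges, applies K\"onig's theorem to decompose it into $\beta-1$ perfect matchings, and selects one containing at least $\lceil\alpha(\beta-\alpha)/(\beta-1)\rceil$ parallel edges. The induced transport plan moves mass only over distances $0$ or $1$ (the parallel edges mean ``mass stays put''), so no rerouting of leftovers at distance $2$ or $3$ is ever needed.

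Second, and more seriously, you have no proof of the $\frac{\alpha}{2}$ branch, and the route you gesture at (a sharper Hall/flow estimate for the same transport problem) is not how the paper obtains it. The paper's Lin--Lu--Yau bound degenerates to $\frac{2}{d}$ when $\beta=\alpha$, and it is neither claimed nor clear that $\kappa_{LLY}(x,y)\ge\frac{2+\alpha/2}{d}$ holds in that regime; the two curvature notions genuinely diverge here. Instead, the $\frac{\alpha}{2}$ branch comes from Bakry--\'Emery curvature: the paper computes the curvature matrix at $x$ (a Schur complement of $\Gamma_2(x)$) explicitly in terms of the adjacency matrix $A_{S_1(x)}$ of the local graph, diagonalizes it, and shows $K_{BE}(+,x)=2+\frac{\alpha}{2}$ whenever $1\neq\beta\ge\alpha$ and $(\alpha,\beta)\neq(2,2)$, after which the Bakry--\'Emery Bonnet--Myers theorem gives $\mathrm{diam}(G)\le\lfloor 2d/(2+\frac{\alpha}{2})\rfloor$. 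Without this spectral ingredient (or a genuinely new transport construction), the harder half of the stated bound is unproved in your proposal.
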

We remark that \eqref{eq:main1} also holds for the case $2=\beta>\alpha$. That is, \eqref{eq:main1} actually holds for the case $1\neq \beta\geq \alpha$ except for $\alpha=\beta=2$. 
%In case $\alpha=\beta=2$, both our method and the existing bound \eqref{eq:BCN} and \eqref{eq:Terwilliger1} can only tell $\mathrm{diam}(G)\leq d$ (see Corollary \ref{cor:BEcurvature} below). 
We conjecture that the bound \eqref{eq:main1} still holds when $\alpha=\beta=2$.

Theorem \ref{thm:main1} significantly improves previous diameter bounds \eqref{eq:BCN},\eqref{eq:Terwilliger2} and \eqref{eq:HLX24} (for large $d$). Indeed, the coefficient of $d$ in \eqref{eq:main1} can be much smaller than $\frac{2}{3}$. 
%In particular, if $\beta>\alpha^2-\alpha+1$, then Theorem \ref{thm:main1} tells $\mathrm{diam}(G)\leq \left\lfloor\frac{2}{2+\alpha}d\right\rfloor$. We refer to Corollary \ref{cor:perfect_matching} for the existence of a local perfect matching in this case.

Moreover, Theorem \ref{thm:main1} confirms a weaker version of Conjecture \ref{conj:QPK}.
\begin{corollary}\label{cor:QPK_weak}
    Let $G$ be a connected amply regular graph with parameters $(n,d,\alpha,\beta)$ and $0<\varepsilon<1$ be a real number. If $\beta\geq \alpha >\varepsilon d\geq 1$, then the diameter of $G$ is at most $4/\varepsilon$.
\end{corollary}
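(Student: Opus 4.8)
The statement is a quick consequence of Theorem~\ref{thm:main1}: the plan is simply to substitute the hypothesis $\alpha>\varepsilon d$ into the denominator of \eqref{eq:main1}, after first checking that we are in a regime where \eqref{eq:main1} (or a classical substitute) is available. First I would record the elementary consequences of $\beta\geq\alpha>\varepsilon d\geq 1$: since $\alpha$ is an integer strictly larger than $1$, we have $\alpha\geq 2$, hence $\beta\geq\alpha\geq 2$; in particular $\beta\neq 1$, so we never fall into a degenerate case, and moreover $\varepsilon d\geq 1$ gives $d\geq 1/\varepsilon$.

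\emph{Main case $\beta\geq 3$.} Then $\beta\geq\max\{3,\alpha\}$ and Theorem~\ref{thm:main1} applies. Discarding the ceiling term and using only that the maximum in \eqref{eq:main1} is at least $\alpha/2$, together with $\alpha>\varepsilon d>0$, I obtain
\[
\mathrm{diam}(G)\;\le\;\left\lfloor\frac{2d}{2+\max\left\{\frac{\alpha}{2},\left\lceil\frac{\beta-\alpha}{\beta-1}\alpha\right\rceil\right\}}\right\rfloor\;\le\;\frac{2d}{2+\frac{\alpha}{2}}\;<\;\frac{2d}{\frac{\alpha}{2}}\;=\;\frac{4d}{\alpha}\;<\;\frac{4d}{\varepsilon d}\;=\;\frac{4}{\varepsilon}.
\]

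\emph{Edge case $\beta=2$.} Here $2\le\alpha\le\beta=2$ forces $\alpha=\beta=2$, which is precisely the configuration excluded from \eqref{eq:main1}. Instead I would invoke Terwilliger's bound \eqref{eq:Terwilliger1}, which is valid when $2=\beta\geq\alpha$ and gives $\mathrm{diam}(G)\le d$; combined with $2=\alpha>\varepsilon d$, i.e.\ $d<2/\varepsilon$, this yields $\mathrm{diam}(G)<2/\varepsilon<4/\varepsilon$.

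In both cases $\mathrm{diam}(G)<4/\varepsilon$, which is slightly stronger than claimed. There is essentially no obstacle here beyond bookkeeping: the only points to be careful about are that the hypothesis $\alpha>\varepsilon d\geq 1$ never permits $\beta\le 1$, and that the single borderline parameter set $\alpha=\beta=2$ (the one place \eqref{eq:main1} does not reach) must be disposed of via the classical bound \eqref{eq:Terwilliger1}; everything else is the one-line estimate displayed above.
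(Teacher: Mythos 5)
Your proof is correct and follows essentially the same route as the paper: apply Theorem~\ref{thm:main1} and the estimate $2d/(2+\alpha/2)<4d/\alpha<4/\varepsilon$ in the main case, and handle the single excluded configuration $\alpha=\beta=2$ separately via a $\mathrm{diam}(G)\le d<2/\varepsilon$ bound. The only (immaterial) difference is that for that edge case the paper cites its own Corollary~\ref{cor:BE_d}(ii) rather than Terwilliger's bound \eqref{eq:Terwilliger1}; both yield $\mathrm{diam}(G)\le d$, so your argument is complete as written.
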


We prove Theorem \ref{thm:main1} by discrete Ricci curvature estimates of amply regular graphs. Recall that the diameter bound \eqref{eq:HLX24} is obtained via estimating the Lin--Lu--Yau curvature \cite[Theorem 1.3]{HLX24}. The proof of Theorem \ref{thm:main1} is built upon two main ingredients: a significant improvement of the Lin--Lu--Yau curvature estimate and a formula to calculate the Bakry--\'Emery curvature of amply regular graphs; The former is related to local matching conditions and is achieved by using K\"onig theorem for regular bipartite graphs with possibly multiple edges, while the latter is related to the adjacency eigenvalue of the local graph induced by neighbors of a vertex and is a generalization of previous results for strongly regular graphs in the arXiv version of \cite{CLP}.

Lin--Lu--Yau curvature \cite{LLY11,O09} and Bakry--\'Emery curvature \cite{BE,LY10} of graphs are discrete analogues of Ricci curvature of Riemannian manifolds, based on optimal transportation and Bochner identity, respectively. The study of discrete Ricci curvatue has attracted lots of attention in recent years, see, e.g., the book \cite{NR17} and references therein. Various combinatorial, geometric and analytic consequences of Lin--Lu--Yau and Bakry--\'Emery curvature lower bounds have been established, see, e.g., \cite{BRT21,CKKLMP20,HPS24,HL17,KKRT,LMP24,LW20,MWAdvMath,SalezGAFA,SalezJEMS,S20,Smith14}. Especially, positive Lin--Lu--Yau or Bakry--\'Emery curvature lower bound implies the finiteness of a locally finite graph and a sharp diameter bound \cite{LLY11,LMP18}. Such results are considered as discrete analogues of Bonnet-Myers theorem from Riemannian geometry. 

Another consequence of our discrete Ricci curvature approach is an improvement of Terwilliger's finiteness result for amply regular graphs, concerning his Conjecture \ref{conj:Terwilliger}.
\begin{theorem}\label{thm:finite_ARG}
Any amply regular graph with parameters $(n,d,\alpha,\beta)$ with $\alpha\leq6\beta-9$ is finite. Moreover, 
    there are only finitely many amply regular graphs with $\alpha\leq6\beta-9$ and fixed degree.
\end{theorem}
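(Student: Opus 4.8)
\emph{Proof strategy.} The plan is to show that the hypothesis $\alpha\le 6\beta-9$ forces $\mathrm{diam}(G)$ to be bounded above by a quantity depending only on $(d,\alpha,\beta)$. A finite diameter $D:=\mathrm{diam}(G)$ immediately yields $n\le 1+d\sum_{i=0}^{D-1}(d-1)^i<\infty$, which proves the first assertion. For the second, observe that for a fixed degree $d$ there are only finitely many admissible parameter pairs, since $0\le\alpha\le d-1$ and, once the trivial case $\mathrm{diam}(G)\le 1$ (where $G$ is a complete graph) is set aside, $1\le\beta\le d$; the diameter bound is therefore uniform over these pairs, so $n$ is bounded and only finitely many such graphs exist up to isomorphism.

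It remains to bound $\mathrm{diam}(G)$, which I would do by cases; note first that $0\le\alpha\le 6\beta-9$ forces $\beta\ge 2$. (i) If $\beta\ge\max\{3,\alpha\}$, then Theorem~\ref{thm:main1} applies and gives $\mathrm{diam}(G)\le\lfloor 2d/(2+\max\{\alpha/2,\lceil\tfrac{\beta-\alpha}{\beta-1}\alpha\rceil\})\rfloor\le d$; this disposes of all pairs with $\beta\ge 3$ and $\alpha\le\beta$. (ii) If $\beta=2$ and $\alpha\le 2$, then $\beta\neq 1$ and $\beta\ge\alpha$, so Terwilliger's bound~\eqref{eq:Terwilliger1} (or \eqref{eq:BCN}) gives $\mathrm{diam}(G)\le d$. (iii) There remains the regime $\alpha>\beta$, where necessarily $2\le\beta<\alpha\le 6\beta-9$ (for instance $(\alpha,\beta)=(3,2)$); this case is not covered by any of the classical diameter bounds~\eqref{eq:BCN}--\eqref{eq:HLX24}, and is the heart of the matter.

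For case (iii) I would invoke the Bakry--\'Emery curvature formula for amply regular graphs. Every amply regular graph is $S^1$-regular: at each vertex $x$, every neighbour $y$ of $x$ has exactly one neighbour in $\{x\}$ and exactly $d-1-\alpha$ neighbours in the distance-$2$ sphere $S_2(x)$, and the local graph $G_x$ on $N(x)$ is $\alpha$-regular on $d$ vertices. Under such regularity $\mathcal{K}_\infty(x)$ admits a lower bound expressed through $\alpha$, $\beta$ and the relevant adjacency eigenvalue $\lambda$ of $G_x$, generalizing the strongly regular case of \cite{CLP}; moreover the amply-regularity of $G$ constrains the common-neighbour counts inside $G_x$ in terms of $\beta$, which in turn bounds $\lambda$. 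Feeding these bounds into the curvature formula, the hypothesis $\alpha\le 6\beta-9$ yields $\mathcal{K}_\infty(x)\ge K$ for an explicit constant $K=K(d,\alpha,\beta)>0$, uniformly in $x$. The Bonnet--Myers theorem for Bakry--\'Emery curvature~\cite{LMP18} then gives $\mathrm{diam}(G)\le 2d/K<\infty$, completing case (iii) and hence both assertions.

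The main obstacle is establishing the Bakry--\'Emery curvature lower bound for amply regular graphs --- the generalization of \cite{CLP} --- and, within it, identifying the correct a priori bound on the local-graph eigenvalue $\lambda$ so that the arithmetic reproduces exactly the threshold $6\beta-9$ with \emph{strict} positivity, in particular at the boundary $\alpha=6\beta-9$ and in the small-$\beta$ or small-diameter (strongly regular) situations, which if necessary are handled directly via~\eqref{eq:BCN}--\eqref{eq:Terwilliger2}.
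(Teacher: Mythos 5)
Your skeleton --- reduce to a diameter bound via discrete curvature, then count --- is essentially the paper's, and your cases (i) and (ii) are fine. The genuine gap is in case (iii), $\beta<\alpha\le 6\beta-9$, where the mechanism you propose is not the one that works. The curvature formula of Theorem \ref{thm:ARGCurvature} reads
\[
K_{BE}(+,x)=2+\frac{\alpha}{2}+\left(\frac{2d(\beta-2)-\alpha^2}{2\beta}+\frac{2}{\beta}\min_{\lambda\in\mathrm{sp}(A_{S_1(x)})}\left(\lambda-\frac{\alpha}{2}\right)^2\right)_-,
\]
and the paper's proof does \emph{not} bound the local eigenvalue $\lambda$ at all: it discards the nonnegative term $\frac{2}{\beta}(\lambda-\alpha/2)^2$ entirely and instead needs a \emph{lower bound on the degree}, namely $d\ge 3(\alpha-\beta+2)$. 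That bound is supplied by Theorem \ref{thm:1.2.3}: an amply regular graph of diameter at least $3$ with $d<3(\alpha-\beta+2)$ is the icosahedron or the line graph of a regular graph of girth at least $5$; the latter has $\beta=1$, which is excluded by $\alpha\le 6\beta-9$, and the former is a single finite graph. With $d\ge 3(\alpha-\beta+2)$, $\alpha\ge\beta$ and $6\beta-9-\alpha\ge 0$, one computes $\frac{2d(\beta-2)-\alpha^2}{2\beta}>-\frac{\alpha+3}{2}$, hence $K_{BE}(+,x)>\frac12$, and Bonnet--Myers gives $\mathrm{diam}(G)\le 4d$.

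Constraining $\lambda$ via common-neighbour counts in the local graph, as you suggest, cannot substitute for this degree bound. If one only knows $d\ge\alpha+2$, take $\beta=3$, $\alpha=9=6\beta-9$, $d=11$: the bracketed term is $\frac{22-81}{6}+\frac{2}{3}\min_\lambda\left(\lambda-\frac{9}{2}\right)^2\approx-9.83+\frac{2}{3}\min_\lambda\left(\lambda-\frac{9}{2}\right)^2$ against $2+\frac{\alpha}{2}=6.5$, so positivity would require every eigenvalue of the $\alpha$-regular local graph to avoid an interval of radius greater than $\sqrt{5}$ around $\frac{9}{2}$ --- and nothing in amply-regularity guarantees this. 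So the missing idea is precisely the appeal to Theorem \ref{thm:1.2.3} and the separate disposal of its exceptional graphs; once $d\ge 3(\alpha-\beta+2)$ is secured, no eigenvalue information is needed and the arithmetic closes with strict positivity at the threshold $\alpha=6\beta-9$.
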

Recall that Terwilliger \cite{Terwilliger83} has shown the above result for the case $1\neq\beta\geq \alpha$. Theorem \ref{thm:finite_ARG} is proved via Bakry-\'Emery curvature lower bound estimates not only for the case $\beta\geq \alpha$ but also for certain case with $\alpha>\beta$.

Our discrete Ricci curvature approach further leads to spectral gap estimates for amply regular graphs. Let $\theta_1\leq\cdots\leq \theta_{n-1}\leq \theta_n$ be the adjacency eigenvalues of a $d$-regular graph $G$ with $n$ vertices. Then we have 
\[-d\leq \theta_1\leq\cdots\leq \theta_{n-1}\leq \theta_n=d,\]
where $\theta_{n-1}=d$ if and only if $G$ is disconnected, and $\theta_1=-d$ if and only if $G$ is bipartite.

\begin{theorem}\label{thm:upper_bound_for_adjacency_eigencalue}
    Let $G$ be an amply regular graph with parameters $(n,d,\alpha,\beta)$. 
    \begin{itemize}
        \item [(i)] If $1\neq \beta\geq \alpha$ and $(\alpha,\beta)\neq (2,2)$, then 
        \[\theta_{n-1}\leq d-2-\max\left\{\frac{\alpha}{2},\left\lceil\frac{(\beta-\alpha)\alpha}{\beta-1}\right\rceil\right\}.\]
        \item [(ii)] If $\alpha=\beta=2$, then 
        \[\theta_{n-1}\leq d-2.\]
    \end{itemize}
\end{theorem}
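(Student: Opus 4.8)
The plan is to deduce Theorem~\ref{thm:upper_bound_for_adjacency_eigencalue} from the discrete Ricci curvature lower bounds driving this paper, by replacing the Bonnet--Myers-type diameter bound used for Theorem~\ref{thm:main1} with a Lichnerowicz-type spectral gap estimate. Recall two classical facts: (a) if $G$ is $d$-regular and the Lin--Lu--Yau curvature of every edge is at least $\kappa$, then the first nonzero eigenvalue $\lambda_1$ of the normalized Laplacian $\mathcal{L}=I-\frac{1}{d}A$ satisfies $\lambda_1\geq\kappa$ \cite{LLY11}; (b) if instead $G$ satisfies the Bakry--\'Emery condition $CD(\mathcal{L},K)$, then $\lambda_1\geq K$ \cite{LY10} (the integrated Bochner inequality applied to an eigenfunction). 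Since on a $d$-regular graph the eigenvalues of $\mathcal{L}$ are $1-\theta_i/d$ and, for connected $G$, $\lambda_1=1-\theta_{n-1}/d$, each of (a) and (b) translates into $\theta_{n-1}\leq d(1-\kappa)$, respectively $\theta_{n-1}\leq d(1-K)$. We assume throughout that $G$ is connected, since otherwise $\theta_{n-1}=d$ and there is nothing to say. Thus the theorem reduces, in each case, to producing the right curvature lower bound of the form $c/d$.

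For part~(i) the plan is to invoke the improved Lin--Lu--Yau curvature estimate that also underlies Theorem~\ref{thm:main1}: for an amply regular graph with parameters $(n,d,\alpha,\beta)$ satisfying $1\neq\beta\geq\alpha$ and $(\alpha,\beta)\neq(2,2)$, every edge $xy$ has
\[
\kappa(x,y)\;\geq\;\frac{1}{d}\left(2+\max\left\{\frac{\alpha}{2},\ \left\lceil\frac{(\beta-\alpha)\alpha}{\beta-1}\right\rceil\right\}\right).
\]
Feeding this value of $\kappa$ into translation~(a) yields exactly $\theta_{n-1}\leq d-2-\max\{\alpha/2,\lceil(\beta-\alpha)\alpha/(\beta-1)\rceil\}$. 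In this sense part~(i) is the spectral-gap counterpart of the diameter bound~\eqref{eq:main1}, needing no input beyond the curvature estimate already required there; the remark after Theorem~\ref{thm:main1} that this estimate also covers the case $2=\beta>\alpha$ carries over verbatim.

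The case $\alpha=\beta=2$ is precisely the one excluded from that sharp estimate, so for part~(ii) the plan is to fall back on the baseline curvature bound $\kappa(x,y)\geq 2/d$, valid for every edge of an amply regular graph with $\beta\geq 2$. This is the ``$2$'' appearing in the estimate above, the part that survives even when $(\alpha,\beta)=(2,2)$; it can equivalently be read off from the Bakry--\'Emery curvature formula for amply regular graphs, since when $\alpha=2$ the graph induced on the neighbours of a vertex is $2$-regular, hence a disjoint union of cycles with largest adjacency eigenvalue $2$, which fixes the relevant curvature term. Translation~(a) (or (b)) then gives $\theta_{n-1}\leq d-2$. Together, (i) and (ii) cover the full range $1\neq\beta\geq\alpha$.

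I expect essentially no obstacle in Theorem~\ref{thm:upper_bound_for_adjacency_eigencalue} itself: given the curvature estimates it is a two-line corollary, the $1/d$ scaling of the normalized Laplacian turning the ``$2+\max\{\cdots\}$'' of the curvature bound into the ``$d-2-\max\{\cdots\}$'' of the eigenvalue bound. The genuine work lies upstream --- the K\"onig-theorem/local-matching argument producing the Lin--Lu--Yau bound behind part~(i), and the local-eigenvalue computation behind the Bakry--\'Emery bound relevant to part~(ii) --- and these are the technical core of the paper rather than of this theorem. Two small points still deserve care: that in both translations the curvature hypothesis is imposed only on edges (legitimate, since the Lin--Lu--Yau eigenvalue estimate is stated in exactly that form and the Bakry--\'Emery one only uses the pointwise $CD$ inequality, so no idleness-parameter bookkeeping is needed), and the degenerate case $\alpha=\beta=2$, where one must be content with the weaker bound $d-2$.
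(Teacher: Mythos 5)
Your overall strategy --- converting the curvature lower bounds into spectral gaps via Lichnerowicz-type estimates instead of Bonnet--Myers --- is exactly the paper's, and your part (ii) is fine (both routes work: Theorem \ref{thm:beta_alpha_1} gives $\kappa_{LLY}\geq 2/d$ when $\alpha=\beta=2$ since the ceiling term vanishes, and Corollary \ref{cor:BEcurvature}(iii) gives $K_{BE}(+,x)\geq 2$). The gap is in part (i): you assert that every edge satisfies
\[
\kappa_{LLY}(x,y)\;\geq\;\frac{1}{d}\left(2+\max\left\{\frac{\alpha}{2},\ \left\lceil\frac{(\beta-\alpha)\alpha}{\beta-1}\right\rceil\right\}\right),
\]
attributing the entire maximum to ``the Lin--Lu--Yau estimate underlying Theorem \ref{thm:main1}.'' But Theorem \ref{thm:beta_alpha_1} only proves $\kappa_{LLY}(x,y)\geq\bigl(2+\lceil\alpha(\beta-\alpha)/(\beta-1)\rceil\bigr)/d$; the $\alpha/2$ branch of the maximum in Theorem \ref{thm:main1} does not come from Lin--Lu--Yau curvature at all, but from the Bakry--\'Emery computation $K_{BE}(+,x)=2+\frac{\alpha}{2}$ (Corollary \ref{cor:BEcurvature}(ii)), and the diameter bound is obtained by intersecting two \emph{different} Bonnet--Myers theorems. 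No inequality $\kappa_{LLY}(x,y)\geq(2+\alpha/2)/d$ is proved anywhere --- when $\beta=\alpha$ the proved Lin--Lu--Yau bound degenerates to $2/d$ --- and there is no general comparison between the two curvature notions that would supply it; the paper explicitly notes they can even have opposite signs. So the $\alpha/2$ half of your part (i) is unjustified as written.

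The repair is short and uses a tool you already listed as your fact (b) but did not deploy: apply the Bakry--\'Emery Lichnerowicz estimate (Theorem \ref{BEEI}) with $K=K_{BE}(+,x)=2+\frac{\alpha}{2}$ and translate Laplacian eigenvalues into adjacency eigenvalues via Lemma \ref{lemma:LapAdj} to obtain $\theta_{n-1}\leq d-2-\frac{\alpha}{2}$; separately apply the Lin--Lu--Yau Lichnerowicz estimate (Theorem \ref{LLYEI}) with the bound of Theorem \ref{thm:beta_alpha_1} to obtain $\theta_{n-1}\leq d-2-\lceil(\beta-\alpha)\alpha/(\beta-1)\rceil$; and take the stronger of the two. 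This two-pronged argument is exactly what the paper does.
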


This improves the previous sharp eigenvalue estimates in \cite[Corollary 1.6]{HLX24} significantly. Notice that $n<\infty$ in Theorem \ref{thm:upper_bound_for_adjacency_eigencalue} is guaranteed by the parameter conditions due to Theorem \ref{thm:finite_ARG}.

Moreover, our approach also produces an estimate for the gap between $\theta_1$ and $-d$ in the other end. 

\begin{theorem}\label{thm:dual}
 Let $G$ be an amply regular graph with parameters $(n,d,\alpha,\beta)$ of diameter at least $4$. If $2\le\alpha\le10\beta-12$, then 
 \[\theta_1\geq -d+2.\]
\end{theorem}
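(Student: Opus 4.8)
The plan is to derive Theorem~\ref{thm:dual} from a Bakry--\'Emery curvature lower bound on the \emph{bipartite double cover} $\widetilde G=G\times K_2$ --- the ``dual'' construction to the estimate behind Theorem~\ref{thm:upper_bound_for_adjacency_eigencalue}. Writing $\mathcal{L}=I-\frac1d A$ for the normalized Laplacian of $G$ (spectrum $\{1-\theta_i/d\}$), the desired inequality $\theta_1\ge -d+2$ is exactly $\lambda_{\max}(\mathcal L)\le 2-\frac2d$, i.e.\ it pushes the top of the spectrum away from its maximal possible value $2$ by $\frac2d$. Since $\alpha\ge2$ forces a triangle in $G$, the graph $G$ is non-bipartite, so $\widetilde G$ is a \emph{connected} $d$-regular bipartite graph; the spectrum of its normalized Laplacian is $\{1-\theta_i/d\}\cup\{1+\theta_i/d\}$, with $1-\theta_n/d=0$ its unique (simple) zero, so the spectral gap of $\widetilde G$ equals $\min\{1-\theta_{n-1}/d,\,1+\theta_1/d\}$. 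Hence a pointwise bound $\mathrm{CD}(K,\infty)$ at every vertex of $\widetilde G$ gives $\min\{1-\theta_{n-1}/d,\,1+\theta_1/d\}\ge K$; taking $K=\frac2d$ already yields $\theta_1\ge -d+2$ (and, as a by-product, the weaker bound $\theta_{n-1}\le d-2$). Equivalently one may phrase this as a curvature bound for the all-negative signing of $G$, i.e.\ for the signless normalized Laplacian $I+\frac1d A$; the two formulations coincide. So the task reduces to: \emph{show that the Bakry--\'Emery curvature of $\widetilde G$ is at least $\tfrac2d$ at every vertex.}

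The heart of the matter is then a curvature computation, and here the rigidity of amply regular graphs does the work. Fix $x\in V(G)$; the ball of radius $2$ around $(x,0)$ in $\widetilde G$ is determined purely by $(\alpha,\beta)$: $\widetilde G$ is triangle-free, the sphere $S_1=\{(y,1):y\sim_G x\}$ has $d$ vertices and carries no edges, the sphere $S_2=\{(z,0):1\le d_G(x,z)\le 2\}$ is independent, each $(y,1)\in S_1$ sends $d-1$ edges into $S_2$, and any two vertices $(y,1),(y',1)\in S_1$ have $\alpha-1$ common neighbours in $S_2$ if $y\sim_G y'$ and $\beta-1$ if $y\not\sim_G y'$. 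By the Bakry--\'Emery curvature formula specialized to a triangle-free vertex, the $\infty$-curvature at $(x,0)$ is governed by an eigenvalue problem for an explicit matrix built from $\alpha,\beta,d$ and the adjacency matrix $A_{\mathrm{loc}}$ of the local graph $G[N_G(x)]$ --- essentially the ``overlap matrix'' $(d-1)I+(\beta-1)(J-I)+(\alpha-\beta)A_{\mathrm{loc}}$, obtained after the standard Schur-complement elimination of the $S_2$-variables (which is clean here because $S_2$ is independent). Since $G[N_G(x)]$ is $\alpha$-regular, the eigenvalues of $A_{\mathrm{loc}}$ lie in $[-\alpha,\alpha]$; bounding the overlap matrix uniformly over all $\alpha$-regular $G[N_G(x)]$ should produce a clean lower bound $K(x)\ge\frac1d\,g(\alpha,\beta)$, and a direct check should show $g(\alpha,\beta)\ge2$ precisely in the range $2\le\alpha\le10\beta-12$. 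The assumption $\mathrm{diam}(G)\ge4$ enters to discard the degenerate local configurations (and the finitely many small graphs) in which this uniform estimate does not close; outside them the computation above applies and gives $K(x)\ge\frac2d$ for all $x$, hence $\theta_1\ge -d+2$.

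I expect the main obstacle to be exactly this curvature computation: one must carry out the $\Gamma_2$-expansion for a triangle-free vertex of $\widetilde G$ with the given overlap data, identify the correct matrix whose least eigenvalue is the curvature, and prove an eigenvalue estimate for it sharp enough to yield the slope $10\beta-12$ rather than a weaker constant --- this is where one genuinely needs both $\lambda_{\max}(A_{\mathrm{loc}})\le\alpha$ and $\lambda_{\min}(A_{\mathrm{loc}})\ge-\alpha$, and where the possibility $\alpha>\beta$ (permitted here, unlike in Theorem~\ref{thm:upper_bound_for_adjacency_eigencalue}) forces one to balance the two parameters carefully throughout rather than leaning on $\beta\ge\alpha$. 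A secondary point to pin down is the precise way $\mathrm{diam}(G)\ge4$ is used to isolate exactly the right exceptional cases.
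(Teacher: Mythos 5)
Your overall strategy --- prove a Bakry--\'Emery curvature lower bound that controls the bottom of the adjacency spectrum --- is the right one, and your description of the local data of the double cover (out-degrees $d-1$, common-neighbour counts $\alpha-1$ and $\beta-1$) is correct. But there are two concrete gaps. First, the matrix you propose to analyze is not the right one. In the Schur complement that produces the curvature matrix, the contribution of a pair $y_i,y_j\in S_1$ is $\sum_k a_{y_iz_k}a_{y_jz_k}/d^{-}_{z_k}$, i.e.\ common neighbours weighted by \emph{reciprocal in-degrees}. In $\widetilde G$ the sphere $S_2((x,0))$ contains two kinds of vertices: $(z,0)$ with $z\sim_G x$ (in-degree $\alpha$) and $(z,0)$ with $d_G(x,z)=2$ (in-degree $\beta$). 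The common neighbours of $y_i,y_j$ of the first kind are counted by $(A_{\mathrm{loc}}^2)_{ij}$, so the relevant matrix is quadratic in $A_{\mathrm{loc}}$, not the affine expression $(d-1)I+(\beta-1)(J-I)+(\alpha-\beta)A_{\mathrm{loc}}$ you wrote; this quadratic dependence is exactly what produces, in the paper's direct computation with the all-$-1$ signature on $G$ itself (Theorem~\ref{-1curvature}), the term $\min_{\lambda\in\mathrm{sp}(A_{S_1(x)})}\left(\lambda+\beta-\tfrac{\alpha}{2}\right)^2$. Working on $(G,-)$ and invoking the signed Lichnerowicz inequality (Theorem~\ref{BEEI} with Lemma~\ref{lemma:LapAdj}) also sidesteps your unproved claim that a curvature bound on $\widetilde G$ ``coincides'' with the signed one: the $\Gamma_2$-form at a vertex of $\widetilde G$ has strictly more degrees of freedom than the signed form on $B_2(x)\subset G$, the symmetric/antisymmetric decomposition under the deck transformation does not diagonalize it, and the needed conclusion $\theta_1\ge -d+2$ only requires the (weaker, directly computable) bound $K_{BE}(-,x)\ge 2$.

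Second, and more seriously, there is no $d$-free function $g(\alpha,\beta)$ that is $\ge 2$ exactly on the range $2\le\alpha\le10\beta-12$: the curvature formula gives $K_{BE}(-,x)\ge 2$ precisely when $2d(\beta-2)\ge(\alpha-\beta)(\alpha-4\beta)$, which genuinely involves $d$. The hypothesis $\mathrm{diam}(G)\ge4$ is not used to ``discard finitely many small graphs'' but to supply lower bounds on $d$ in terms of $\alpha,\beta$. When $2\le\alpha\le\beta$ one uses $d\ge2\beta$ (Theorem~\ref{thm:1.9.3}) to get $(\alpha-\beta)(\alpha-4\beta)\le4\beta(\beta-2)\le2d(\beta-2)$. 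When $\beta<\alpha\le10\beta-12$ one uses Theorem~\ref{thm:1.2.3} to rule out $d<3(\alpha-\beta+2)$ (the icosahedron has diameter $3$, and line graphs of girth-$5$ regular graphs have $\beta=1$, incompatible with $\alpha\le10\beta-12$), and then $d\ge3(\alpha-\beta+2)$ yields $(\alpha-\beta)(\alpha-4\beta)-2d(\beta-2)\le(\alpha-\beta)(\alpha-10\beta+12)-12\beta+24\le0$. This two-case analysis is where the threshold $10\beta-12$ actually comes from; without these degree bounds your argument does not close.
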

%\begin{theorem}\label{thm:sign}
 %   Let $G=(V,E)$ be an amply regular graph with parameter $(n,d,\alpha,\beta)$. Let $\theta_1\leq\cdots\leq \theta_{n-1}\leq \theta_n=d$ be the adjacency eigenvalues of $G$. If $2d(\beta-2)\geq (\alpha-\beta)(\alpha-4\beta)-4\alpha$, then
%        \[\theta_1\geq -d+2.\]
        
%%\item [(ii)]  If $2d(\beta-2)\geq (\alpha-\beta)(\alpha-4\beta)$, then \[\theta_1\geq -d+2.\]
        
%\end{theorem}

%\begin{remark}
  %  The above estimate Theorem \ref{thm:sign} $(ii)$ is sharp. Recall that the $4\times 4$ Rook's graph, i.e., the Cartesian product of two copies of $K_4$, is amply regular with parameters $(16,6,2,2)$. It satisfies the assumption in Theorem \ref{thm:sign} $(ii)$ and has $\sigma_1=-2=-d+2+\frac{2\alpha}{\beta}$.
%\end{remark}
%\begin{remark}
 %   A sufficient condition for Theorem \ref{thm:sign} is $1\neq \beta\in \left[\frac{1}{4}\alpha,\frac{5}{4}\alpha\right]$.
%\end{remark}
We achieve this via establishing a formula to calculate the Bakry--\'Emery curvature of a signed graph $(G,\sigma)$ with $\sigma$ being the all-$-1$ signature. A signed graph is a graph $G=(V,E)$ associated with a signature $\sigma: E\to \{+1,-1\}$ \cite{Harary,Zaslavsky}. Bakry--\'Emery curvature for general connection graphs, including signed graphs as a special case, is introduced by the third named author, M\"unch and Peyerimhoff \cite{LMP}, and further studied in \cite{CL24,HL22}.
%\begin{itemize}
%    \item If $\beta\geq \alpha\geq 3$, then $\mathrm{diam}(G)\leq \left\lfloor\frac{4d}{4+\alpha}\right\rfloor$.
 %   \item  If $\beta\geq \alpha\geq 3$, then $\sigma_{n-1}\leq d-2-\frac{\alpha}{2}$.
%    \item If $\frac{5}{4}\alpha\geq\beta\geq \alpha\geq 3$, then 
%    \[-d+2+\frac{2\alpha}{\beta} \leq\sigma_1\leq\cdots\leq \sigma_{n-1}\leq d-2-\frac{\alpha}{2}.\]
%\end{itemize}

Our eigenvalue estimates in Theorem \ref{thm:upper_bound_for_adjacency_eigencalue} and Theorem \ref{thm:dual} naturally yields estimates on Cheeger constant, bipartiteness constant \cite{BJ13,Trevisan}, and expansion constant \cite[p.151]{AS16} of an amply regular graph, see Corollary \ref{cor:CheegerDual} and Theorem \ref{thm:expander}.

We further derive the following sharp volume growth estimate by combining our discrete curvature estimates with two key lemmas due to Lin-Lu-Yau \cite[Lemma 4.4]{LLY11} and Terwilliger \cite[Lemma 3.3]{Terwilliger83}, respectively.
\begin{theorem}\label{thm:volume}
    Let $G=(V,E)$ be an amply regular graph with parameters $(n,d,\alpha,\beta)$ such that $1 \neq \beta \geq \alpha$. Then we have for any $x \in V$ and $i\geq1$ that   $$|S_{i+1}(x)|\leq \frac{d-\max\left\{\alpha+1, i\left(1+\frac{1}{2}\left\lceil\frac{\alpha(\beta-\alpha)}{\beta-1}\right\rceil\right)\right\}}{\beta+i-1}|S_{i}(x)|,$$
    where $S_i(x)$ is the set of vertices of distance $i$ to $x$.
\end{theorem}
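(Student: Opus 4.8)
The plan is to compare $|S_{i+1}(x)|$ with $|S_i(x)|$ by counting the edges between the two spheres in two ways. For a vertex $v$ at distance $\ell$ from $x$, write $D^+_x(v)=|N(v)\cap S_{\ell+1}(x)|$ and $D^-_x(v)=|N(v)\cap S_{\ell-1}(x)|$ for its numbers of neighbours one step farther from, respectively closer to, $x$. Then
\[
\sum_{y\in S_{i+1}(x)}D^-_x(y)=\bigl|E\bigl(S_i(x),S_{i+1}(x)\bigr)\bigr|=\sum_{w\in S_i(x)}D^+_x(w),
\]
so that a uniform lower bound on $D^-_x$ over $S_{i+1}(x)$ together with a uniform upper bound on $D^+_x$ over $S_i(x)$ directly yields the desired inequality between sphere sizes.

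For the lower bound I would invoke Terwilliger's Lemma 3.3 \cite[Lemma 3.3]{Terwilliger83}, which under the hypothesis $1\neq\beta\geq\alpha$ gives $D^-_x(y)\geq\beta+i-1$ for every $y\in S_{i+1}(x)$; for $i=1$ this is just the defining property that two vertices at distance two have $\beta$ common neighbours. Summing over $y\in S_{i+1}(x)$ produces the denominator $\beta+i-1$.

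For the upper bound I would show that $D^+_x(w)\leq d-\max\{\alpha+1,\;i(1+\tfrac12\lceil\alpha(\beta-\alpha)/(\beta-1)\rceil)\}$ for every $w\in S_i(x)$, in two independent parts. The term $\alpha+1$ is elementary: fixing a down-neighbour $z\in S_{i-1}(x)$ of $w$, the $\alpha$ common neighbours of the edge $wz$ all lie in $S_{i-1}(x)\cup S_i(x)$ (each being a neighbour both of $z$ and of $w$), and together with $z$ itself they exhibit $\alpha+1$ neighbours of $w$ outside $S_{i+1}(x)$. For the term $i(1+\tfrac12\lceil\alpha(\beta-\alpha)/(\beta-1)\rceil)$ I would bring in the discrete curvature machinery: the improved Lin--Lu--Yau curvature estimate established earlier gives a lower bound $\kappa_{\mathrm{LLY}}(u,v)\geq K/d$ with $K=2+\lceil\alpha(\beta-\alpha)/(\beta-1)\rceil$ on every edge $uv$. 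Applying Lin--Lu--Yau's Lemma 4.4 \cite[Lemma 4.4]{LLY11} to the $1$-Lipschitz function $f=d(x,\cdot)$ shows that $\Delta f$ drops by at least $\kappa_{\mathrm{LLY}}$ at each step along a shortest path from $x$ to $w$; since $\Delta f(x)=1$ and $\Delta f(w)=\tfrac1d\bigl(D^+_x(w)-D^-_x(w)\bigr)$, telescoping gives $D^+_x(w)-D^-_x(w)\leq d-iK$. Adding the trivial relation $D^+_x(w)+D^-_x(w)\leq d$ and dividing by $2$ yields exactly $D^+_x(w)\leq d-iK/2=d-i\bigl(1+\tfrac12\lceil\alpha(\beta-\alpha)/(\beta-1)\rceil\bigr)$. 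Summing the resulting two-part upper bound over $w\in S_i(x)$ and comparing with the lower bound from Terwilliger's lemma completes the proof.

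The step I expect to be most delicate is the curvature one: one must check that the improved Lin--Lu--Yau estimate is available throughout the range $1\neq\beta\geq\alpha$ --- in particular in the degenerate cases $\alpha=0$ and $\alpha=\beta=2$, where the ceiling term vanishes and the estimate reduces to $\kappa_{\mathrm{LLY}}\geq 2/d$ --- and that the normalisation of the Lin--Lu--Yau curvature entering Lemma 4.4 agrees with the one in which our estimate is stated, so that the telescoping and the initial value $\Delta f(x)=1$ are used correctly. As a sanity check, on the hypercube $Q_n$, where $\alpha=0$ and $\beta=2$, every inequality above becomes an equality and the bound collapses to $|S_{i+1}(x)|=\tfrac{n-i}{i+1}|S_i(x)|=\binom{n}{i+1}$, which accounts for the claimed sharpness.
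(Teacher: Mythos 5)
Your proposal is correct and follows essentially the same route as the paper: double-count the edges $E(S_i(x),S_{i+1}(x))$, bound the in-degrees from below by $\beta+i-1$ via Terwilliger's lemma, and bound the out-degrees from above by the elementary $\alpha+1$ argument together with the Lin--Lu--Yau Lemma 4.4 applied with the curvature lower bound $k=\bigl(2+\lceil\alpha(\beta-\alpha)/(\beta-1)\rceil\bigr)/d$ from the improved estimate. The delicate points you flag are handled exactly as you anticipate: the curvature estimate is stated for the full range $1\neq\beta\geq\alpha$, and the combination $d^{x,+}_y-d^{x,-}_y\leq d-ikd$ with $d^{x,+}_y+d^{x,-}_y\leq d$ gives $d^{x,+}_y\leq d(1-ik/2)$ precisely as in the paper.
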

The volume growth estimates in Theorem \ref{thm:volume} and the corresponding volume estimate are sharp for hypercube graphs (see Example \ref{ex:hypercube}).

Finally, we comment that our diameter and eigenvalue estimates in Theorems \ref{thm:main1}, \ref{thm:upper_bound_for_adjacency_eigencalue} and \ref{thm:dual} still hold under more general conditions, see Corollary \ref{cor:BE_d} and Theorem \ref{thm:eigenvalueestimate}.

The rest of our paper is organized as follows. In Section \ref{preliminary}, we provide basics about the two discrete Ricci curvature notions and about signed and amply regular graphs. We establish the Lin--Lu--Yau curvature and Bakry--\'Emery curvature estimates of amply regular graphs with applications to diameter bounds in Sections \ref{section:LLY} and \ref{section:BE}, respectively. We discuss further applications of our curvature estimates in bounding eigenvalues, isoperimetric constants and volume growth of amply regular graphs in Section \ref{section:furtherApp}.

%%%%%%%%%%%%%%%%%%%%%%%%%%%%%%%%%%%%%%%%%%%%%%%%%%%%%
\section{Preliminaries}\label{preliminary}
In this section, we collect preliminaries about Lin--Lu--Yau curvature, Bakry--\'Emery curvature, signed graphs and amply regular graphs. Let $G=(V,E)$ be a graph with vertex set $V$ and edge set $E$. For any two vertices $x,y\in V$, we write $xy\in E$ if there exists an edge $\{x,y\}\in E$ between them. The vertex degree $d_x$ of a vertex $x\in V$ is defined by $d_x=\sum_{y\in V:\, xy\in E} 1$. A graph $G$ is called locally finite if $d_x<\infty$ for any $x\in V$.

\subsection{Lin--Lu--Yau Curvature}

 \begin{definition}[Wasserstein distance]
   
     Let $G=(V,E)$ be a locally finite graph, $\mu_1$ and $\mu_2$ be two probability measures on $V$. The Wasserstein distance $W_1(\mu_1, \mu_2)$ between $\mu_1$ and $\mu_2$ is defined as
     \[W_1(\mu_1,\mu_2)=\inf_{\pi}\sum_{y\in V}\sum_{x\in V}d(x,y)\pi(x,y),\]
     where $d(x,y)$ is the combinatorial distance between $x$ and $y$ in $G$, and the infimum is taken over all maps $\pi: V\times V\to [0,1]$ satisfying
     \[\mu_1(x)=\sum_{y\in V}\pi(x,y),\,\,\mu_2(y)=\sum_{x\in V}\pi(x,y).\] Such a map is called a transport plan.
     \end{definition}
     For any $p\in [0,1]$, consider the particular probability measure $\mu_x^p$ around a vertex $x\in V$ defined as follows:
     \[\mu_x^p(y)=\left\{
                    \begin{array}{ll}
                      p, & \hbox{if $y=x$;} \\
                      \frac{1-p}{d_x}, & \hbox{if $xy\in E$;} \\
                      0, & \hbox{otherwise.}
                    \end{array}
                  \right.
     \]

   The following definition of discrete curvature is introduced by Ollivier \cite{O09} and Lin, Lu and Yau \cite{LLY11}, as a discrete analgoue of the Ricci curvature of a Riemannian manifold.
   \begin{definition}[$p$-Ollivier curvature and Lin--Lu--Yau curvature \cite{LLY11,O09}] Let $G=(V,E)$ be a locally finite graph. For any vertices $x,y\in V$ and $p\in [0,1]$, the $p$-Ollivier curvature $\kappa_p(x,y)$ is defined as
       \[\kappa_p(x,y):=1-\frac{W_1(\mu_x^p,\mu_y^p)}{d(x,y)}.\]
     We then define the Lin--Lu--Yau curvature $\kappa_{LLY}(x,y)$ as
       \[\kappa_{LLY}(x,y):=\lim_{p\to 1}\frac{\kappa_p(x,y)}{1-p}.\]
       \end{definition}
       Notice that $\kappa_1(x,y)$ is always $0$. Hence, the Lin--Lu--Yau curvature $\kappa_{LLY}(x,y)$ equals to the negative of left derivative of the function $p\mapsto \kappa_p(x,y)$ at $p=1$.
       
For regular graphs, Bourne et. al. \cite{BCMLP} found the following limit-free reformulation of the Lin--Lu--Yau curvature, by showing that the curvature function $p\mapsto \kappa_p(x,y)$ at any edge $xy\in E$ is piecewise linear with at most $2$ linear parts.
       \begin{theorem}[{\cite{BCMLP}}]\label{thm:BCMLP}
           Let $G=(V,E)$ be a $d$-regular locally finite graph. For any edge $xy\in E$, we have
\begin{equation*}
\kappa_{LLY}(x,y)=2\kappa_{\frac{1}{2}}(x,y)=\frac{d+1}{d}\kappa_\frac{1}{d+1}(x,y).
\end{equation*}
       \end{theorem}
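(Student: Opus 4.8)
The plan is to determine the ``idleness function'' $p\mapsto\kappa_p(x,y)$ exactly on $[\tfrac1{d+1},1]$; everything else is then formal. Fix an edge $xy$, write $N(\cdot)$ for neighbourhoods, and set $A=N(x)\setminus(N(y)\cup\{y\})$ and $B=N(y)\setminus(N(x)\cup\{x\})$; since $d_x=d_y=d$ one has $|A|=|B|$, every $a\in A$ has $d(a,y)=2$, every $b\in B$ has $d(x,b)=2$, and $d(a,b)\le d(a,x)+d(x,y)+d(y,b)=3$. A vertex-by-vertex computation gives $\mu_x^p-\mu_y^p=a(p)(\delta_x-\delta_y)+b(p)(\mathbbm{1}_A-\mathbbm{1}_B)$ with $a(p)=\tfrac{(d+1)p-1}{d}$ and $b(p)=\tfrac{1-p}{d}$; in particular $p=\tfrac1{d+1}$ is exactly the value at which $a(p)$ changes sign, and $\tfrac12\ge\tfrac1{d+1}$ for all $d\ge1$. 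Since $W_1$ depends only on the difference of its two arguments, for $p\in[\tfrac1{d+1},1]$ we have $W_1(\mu_x^p,\mu_y^p)=W_1\bigl(a(p)\delta_x+b(p)\mathbbm{1}_A,\,a(p)\delta_y+b(p)\mathbbm{1}_B\bigr)$, and the target formula is
\[W_1(\mu_x^p,\mu_y^p)=a(p)\,d(x,y)+b(p)\,W_1(\mathbbm{1}_A,\mathbbm{1}_B),\qquad p\in[\tfrac1{d+1},1].\]
This is affine in $p$, so $\kappa_p(x,y)=1-W_1(\mu_x^p,\mu_y^p)$ is affine on $[\tfrac1{d+1},1]$ and, because $\kappa_1(x,y)=0$, equals $\kappa_{LLY}(x,y)(1-p)$ there; evaluating at $p=\tfrac12$ and at $p=\tfrac1{d+1}$ yields $\kappa_{LLY}(x,y)=2\kappa_{\frac12}(x,y)=\tfrac{d+1}{d}\kappa_{\frac1{d+1}}(x,y)$.

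The inequality $\le$ in the target formula is the ``decoupled'' plan: ship $a(p)$ from $x$ to $y$ along the edge and ship $b(p)\mathbbm{1}_A$ to $b(p)\mathbbm{1}_B$ optimally, with total cost $a(p)d(x,y)+b(p)W_1(\mathbbm{1}_A,\mathbbm{1}_B)$. The heart of the matter, and the step I expect to be the main obstacle, is the matching lower bound: that re-routing mass through $x$, $y$, or the common neighbours $N(x)\cap N(y)$ cannot do better. I would prove this by a direct cost estimate. Given any plan $\pi$ for $(a(p)\delta_x+b(p)\mathbbm{1}_A,\,a(p)\delta_y+b(p)\mathbbm{1}_B)$, write the mass sent out of $x$ as $s_1$ (to $y$) plus $s_2$ (to $B$), and the mass sent out of $a\in A$ as $t_a$ (to $y$) plus $u_a$ (to $B$); mass balance at $y$ forces $\sum_a t_a=s_2$. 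Using $d(x,b)=d(a,y)=2$,
\[\mathrm{cost}(\pi)\ \ge\ s_1+2s_2+2\textstyle\sum_a t_a+\mathrm{cost}(u\text{-part})\ =\ s_1+4s_2+\mathrm{cost}(u\text{-part}),\]
where the $u$-part is the induced partial $A\to B$ transport. Appending to it any plan moving the leftover $s_2$ units of $A$-mass to the leftover $s_2$ units of $B$-demand (cost $\le3s_2$, since each unit moves distance $\le3$) produces a full $b(p)\mathbbm{1}_A\to b(p)\mathbbm{1}_B$ transport, so $\mathrm{cost}(u\text{-part})\ge b(p)W_1(\mathbbm{1}_A,\mathbbm{1}_B)-3s_2$; substituting and using $s_1+s_2=a(p)$ gives $\mathrm{cost}(\pi)\ge a(p)d(x,y)+b(p)W_1(\mathbbm{1}_A,\mathbbm{1}_B)$, as required.

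Alternatively — and this is the route suggested by the sentence preceding the statement — one can observe that $p\mapsto W_1(\mu_x^p,\mu_y^p)$ is the optimal value of a transportation linear program whose data is affine in $p$, hence a convex piecewise-linear function of $p$; it then suffices to show that for an edge the only breakpoint lies at $p=\tfrac1{d+1}$, the value at which $\mu_x^{1/(d+1)}$ becomes the uniform measure on $N(x)\cup\{x\}$. Either way, the explicit linear piece on $[\tfrac1{d+1},1]$ computed above is all that Theorem~\ref{thm:BCMLP} requires, since $\tfrac12$ and $\tfrac1{d+1}$ both lie in this interval.
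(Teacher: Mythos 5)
Your proof is correct. Note that the paper does not prove Theorem~\ref{thm:BCMLP} at all --- it is quoted from \cite{BCMLP} --- so the comparison is with the original argument there, which establishes the stronger structural statement that the idleness function $p\mapsto\kappa_p(x,y)$ is concave and piecewise linear on $[0,1]$ with at most two (for regular graphs, at an edge) linear pieces, the only possible breakpoint being $p=\tfrac{1}{d+1}$; the identities in the theorem then follow exactly as in your last step. What you do instead is compute the linear piece on $[\tfrac{1}{d+1},1]$ explicitly by a primal argument: the decomposition $\mu_x^p-\mu_y^p=a(p)(\delta_x-\delta_y)+b(p)(\mathbbm{1}_A-\mathbbm{1}_B)$ is right (and $a(p)\ge0$ precisely on that interval), the decoupled plan gives the upper bound, and your cost-accounting lower bound is sound: mass balance at $y$ forces $\sum_a t_a=s_2$, the distances $d(x,b)=d(a,y)=2$ and $d(a,b)\le3$ are all correct for $a\in A$, $b\in B$, and completing the $u$-part to a full $A\to B$ plan at extra cost at most $3s_2$ yields $\mathrm{cost}(\pi)\ge s_1+s_2+b(p)W_1(\mathbbm{1}_A,\mathbbm{1}_B)=a(p)+b(p)W_1(\mathbbm{1}_A,\mathbbm{1}_B)$. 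This is more elementary and self-contained than the LP/duality route, at the price of proving only what is needed (linearity on $[\tfrac1{d+1},1]$) rather than the full concavity picture. The one step you invoke without justification is that $W_1$ depends only on $\mu_1-\mu_2$, i.e.\ that common mass may be left in place; this is standard but does use that the cost $d(\cdot,\cdot)$ is a metric (via Kantorovich--Rubinstein duality or a direct rerouting argument), and is worth a sentence if you write this up.
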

We have the following Lin--Lu--Yau curvature upper bound and an interesting connection to the existence of a local perfect matching, see, e.g., \cite[Propsition 2.7]{CKKLMP20}. The upper bound estimate has been established in \cite[Lemma 4.4]{LLY11}, and its relations with local matchings have been discussed in \cite{Bonini,MWAdvMath,Smith14}.
\begin{theorem}[{\cite[Propsition 2.7]{CKKLMP20}}]\label{thm:upperk}
   Let $G=(V,E)$ be a $d$-regular graph. For any edge $xy\in E$, we have
   \[\kappa_{LLY}(x,y)\leq \frac{2+|\Delta_{xy}|}{d},\]
   where $\Delta_{xy}:=S_1(x)\cap S_1(y)$ and $S_1(v):=\{w\in V|\, vw\in E\}$.  Moreover, the following two propositions are equivalent:
   \begin{itemize}
   \item[(a)] $\kappa_{LLY}(x,y) = \frac{2+|\Delta_{xy}|}{d}$;
   \item[(b)] there is a perfect matching between  $N_x$ and $N_y$, where
     $N_x:= S_1(x) \backslash (\Delta_{xy} \cup \{y\})$ and $N_y:= S_1(y) \backslash (\Delta_{xy} \cup \{x\})$.
   \end{itemize}
   \end{theorem}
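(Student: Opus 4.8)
The plan is to reduce to the idleness parameter $p=\tfrac{1}{d+1}$, where the transport problem becomes transparent, and then to read both the inequality and its equality case off Kantorovich duality. By Theorem~\ref{thm:BCMLP}, since $d(x,y)=1$, we have $\kappa_{LLY}(x,y)=\tfrac{d+1}{d}\bigl(1-W_1(\mu_x^{p},\mu_y^{p})\bigr)$ for $p=\tfrac{1}{d+1}$. For this $p$ the measure $\mu_x^{p}$ is uniform of weight $\tfrac{1}{d+1}$ on $\{x\}\cup S_1(x)$, and $\mu_y^{p}$ uniform of weight $\tfrac{1}{d+1}$ on $\{y\}\cup S_1(y)$; using $xy\in E$ and the absence of loops, these two supports meet exactly in $\{x,y\}\cup\Delta_{xy}$, on which the measures agree, while the excess mass of $\mu_x^{p}$ sits with weight $\tfrac{1}{d+1}$ on each of the $|N_x|=d-1-|\Delta_{xy}|$ vertices of $N_x$, and symmetrically for $\mu_y^{p}$ on $N_y$ (so $|N_x|=|N_y|$). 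Since every measure in play is finitely supported, the whole argument lives on a finite subgraph, and I may freely invoke linear-programming duality and the Birkhoff--von Neumann theorem.

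For the upper bound I would test Kantorovich duality against the indicator $f=\mathbf{1}_{V\setminus(\{y\}\cup S_1(y))}$, which is $1$-Lipschitz in the graph metric (indeed every indicator function is). Since $\mu_y^{p}$ is supported where $f\equiv 0$, and $\mu_x^{p}-\mu_y^{p}$ is positive off $\{y\}\cup S_1(y)$ precisely on $N_x$, this gives $W_1(\mu_x^{p},\mu_y^{p})\ge\sum_v f(v)\bigl(\mu_x^{p}(v)-\mu_y^{p}(v)\bigr)=\tfrac{|N_x|}{d+1}=\tfrac{d-1-|\Delta_{xy}|}{d+1}$, and substituting into the identity above yields $\kappa_{LLY}(x,y)\le\tfrac{2+|\Delta_{xy}|}{d}$. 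For the implication (b)$\Rightarrow$(a) I would exhibit the obvious matching plan: leave the common mass on $\{x,y\}\cup\Delta_{xy}$ in place at cost $0$, and ship weight $\tfrac{1}{d+1}$ from each $u\in N_x$ to its partner along the matching edge, at cost $\tfrac{1}{d+1}$ per pair; the total cost $\tfrac{|N_x|}{d+1}=\tfrac{d-1-|\Delta_{xy}|}{d+1}$ shows $W_1(\mu_x^{p},\mu_y^{p})\le\tfrac{d-1-|\Delta_{xy}|}{d+1}$, which together with the upper bound forces equality.

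The substantial direction is (a)$\Rightarrow$(b). Assuming equality, the function $f$ above becomes a dual optimizer, and so does its mirror image $g=\mathbf{1}_{V\setminus(\{x\}\cup S_1(x))}$ (it computes the same value for $W_1(\mu_y^{p},\mu_x^{p})$). Taking an optimal transport plan $\pi^{*}$, complementary slackness against $f$ forces $f(u)-f(v)=d(u,v)$ whenever $\pi^{*}(u,v)>0$, and against $g$ it forces $g(v)-g(u)=d(u,v)$ whenever $\pi^{*}(u,v)>0$. Chasing these two identities --- using that $\mu_x^{p}$ vanishes on $N_y$, that $\mu_y^{p}$ vanishes on $N_x$, and that any neighbour of $x$ adjacent to a neighbour of $y$ already lies in $\Delta_{xy}$ --- I would conclude that every unit of mass leaving a vertex of $N_x$ travels distance exactly $1$ and lands in $N_y$, and dually that each vertex of $N_y$ is fed only from $N_x$. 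Hence $(d+1)\pi^{*}$ restricted to $N_x\times N_y$ is a doubly stochastic matrix supported on the bipartite adjacency between $N_x$ and $N_y$, and Birkhoff--von Neumann (equivalently, Hall's theorem applied to this fractional perfect matching) produces an integral perfect matching, i.e.\ (b).

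The main obstacle is precisely this last piece of bookkeeping: starting from an \emph{arbitrary} optimal plan, one must rule out any leakage of mass from $N_x$ into the common set $\{x,y\}\cup\Delta_{xy}$ (or further away), and symmetrically rule out $N_y$ receiving mass from anywhere other than $N_x$ --- and both exclusions genuinely need the two Lipschitz potentials $f$ and $g$ used in tandem, not either one alone. Once the optimal plan is confined to edges running between $N_x$ and $N_y$, extracting the matching is routine.
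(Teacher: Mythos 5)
The paper offers no proof of this statement at all: it is quoted verbatim from \cite[Proposition 2.7]{CKKLMP20} (with the upper bound traced back to \cite[Lemma 4.4]{LLY11}), so there is nothing in the source to compare your argument against line by line. Judged on its own, your proof is correct and complete. The reduction to idleness $p=\tfrac{1}{d+1}$ via Theorem~\ref{thm:BCMLP}, the computation of the two uniform measures and of their common part $\{x,y\}\cup\Delta_{xy}$, the Kantorovich lower bound from the $1$-Lipschitz indicator $f=\mathbf{1}_{V\setminus(\{y\}\cup S_1(y))}$, and the explicit plan for (b)$\Rightarrow$(a) are all right. For (a)$\Rightarrow$(b), the complementary-slackness argument does confine any optimal plan to the identity on $\{x,y\}\cup\Delta_{xy}$ and to graph edges from $N_x$ into $N_y$, after which $(d+1)\pi^{*}|_{N_x\times N_y}$ is doubly stochastic with support in the bipartite adjacency, and Birkhoff--von Neumann finishes the job. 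Two small remarks. First, the parenthetical claim that ``any neighbour of $x$ adjacent to a neighbour of $y$ already lies in $\Delta_{xy}$'' is false as written (edges between $N_x$ and $N_y$ are exactly what the matching uses); you presumably meant that a common neighbour of $x$ and $y$ lies in $\Delta_{xy}$, and nothing in the argument actually relies on the false version. Second, the two potentials $f$ and $g$ are not really needed ``in tandem'': slackness against $f$ alone forces $\pi^{*}(u,u)=\tfrac{1}{d+1}$ for every $u\in\{x,y\}\cup\Delta_{xy}$, and the second marginal constraint then already saturates $\mu_y^{p}$ on that set, so no mass from $N_x$ can leak there; using $g$ as well is a clean alternative but not indispensable.
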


Next, we recall K\"onig's theorem which would be used in our curvature estimate.

\begin{theorem}[König's theorem]\label{konig}
A bipartite graph $G$ can be decomposed into $d$ edge-disjoint perfect matchings if and only if $G$ is $d$-regular.
\end{theorem}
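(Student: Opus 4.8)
The plan is to prove the two implications separately: the forward direction is immediate, and the reverse direction goes by induction on $d$ via Hall's marriage theorem.

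For the ``only if'' direction, suppose $E(G)=M_1\cup\cdots\cup M_d$ is a partition into edge-disjoint perfect matchings. Fix any vertex $v$. Each $M_i$, being a perfect matching, contains exactly one edge incident to $v$; since the $M_i$ are pairwise edge-disjoint and their union is all of $E(G)$, the edges incident to $v$ are exactly these $d$ edges. Hence $d_v=d$ for every $v$, so $G$ is $d$-regular.

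For the ``if'' direction, let $V(G)=A\sqcup B$ be the bipartition; we may assume $d\geq1$. Counting edges from each side gives $d|A|=|E(G)|=d|B|$, so $|A|=|B|$. We induct on $d$. The base case $d=1$ is trivial, since a $1$-regular bipartite graph is itself a perfect matching. For $d\geq2$ it suffices to extract a single perfect matching $M\subseteq E(G)$: then $G-M$ is a $(d-1)$-regular bipartite graph, the induction hypothesis decomposes it into $d-1$ edge-disjoint perfect matchings, and adjoining $M$ yields the desired decomposition of $G$. To produce $M$, we apply Hall's theorem to $G$ with sides $A,B$. For $S\subseteq A$, the number of edges of $G$ incident to $S$ equals $d|S|$ (counted with multiplicity); each such edge has its other endpoint in $N(S)\subseteq B$, and each vertex of $B$ is incident to at most $d$ edges of $G$, so $d|S|\leq d|N(S)|$, i.e. $|N(S)|\geq|S|$. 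Thus Hall's condition holds, there is a matching saturating $A$, and since $|A|=|B|$ it is perfect. This completes the induction.

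The one point demanding care — rather than a genuine obstacle — is that $G$ may carry multiple edges (this is precisely the generality in which the paper applies König's theorem, cf. the matching arguments of Section \ref{section:LLY}), so ``degree'', the edge counts, and the matchings must all be read with multiplicity; the Hall-condition count above is insensitive to this. An alternative packaging of the same content is to invoke König's edge-coloring theorem, that a bipartite (multi)graph has chromatic index equal to its maximum degree: a $d$-regular bipartite graph then admits a proper $d$-edge-coloring whose color classes are perfect matchings. That route, however, rests on the same inductive Hall argument, so the self-contained proof above is preferable here.
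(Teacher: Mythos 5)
Your proof is correct. Note that the paper itself gives no proof of Theorem \ref{konig}: it is quoted as a classical fact (with only the remark that multiple edges are permitted), so there is nothing to compare against. Your argument is the standard one — the forward direction by counting the unique edge of each matching at a vertex, the reverse by induction on $d$, extracting one perfect matching via Hall's condition (the count $d|S|\leq d|N(S)|$) and passing to the $(d-1)$-regular remainder — and you correctly flag the only point that matters for the paper's application, namely that the Hall count and the notion of matching are insensitive to parallel edges, so the statement holds for the bipartite multigraph $H_G$ constructed in Section \ref{section:LLY}.
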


Notice that in Theorem \ref{konig}, the bipartite graph is allowed to have multiple edges. 

Let us conclude this subsection by recalling the following diameter estimate, which is an analogue of the Bonnet--Myers theorem in Riemannian geometry.
\begin{theorem}[Discrete Bonnet--Myers theorem via Lin--Lu--Yau curvature \cite{LLY11,O09}]\label{LLYBM} Let $G=(V,E)$ be a locally finite connected graph. If $\kappa_{LLY}(x,y)\geq k>0$ holds true for any edge $xy\in E$, then the graph is finite and the diameter satisfies
\[\mathrm{diam}(G)\leq \frac{2}{k}.\]
\end{theorem}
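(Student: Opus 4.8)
The plan is to deduce the statement from a single distance estimate: it suffices to prove $d(x,y)\le 2/k$ for every pair of vertices $x,y$, since then $\mathrm{diam}(G)\le 2/k$, and a connected locally finite graph of finite diameter is finite (the ball of radius $\lfloor 2/k\rfloor$ about any vertex is all of $V$, and it is finite by local finiteness). Because $G$ is connected, $D:=d(x,y)<\infty$ for every pair, so the estimate is meaningful, and the whole proof reduces to showing $Dk\le 2$.

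Fix $x,y$ and a geodesic $x=x_0\sim x_1\sim\cdots\sim x_D=y$. On each edge $x_ix_{i+1}$ one has $d(x_i,x_{i+1})=1$, hence $W_1(\mu_{x_i}^p,\mu_{x_{i+1}}^p)=1-\kappa_p(x_i,x_{i+1})$, and by definition $\kappa_p(x_i,x_{i+1})/(1-p)\to\kappa_{LLY}(x_i,x_{i+1})\ge k$ as $p\to 1$. Since $W_1$ is a metric on probability measures, the triangle inequality along the geodesic gives
\[
W_1(\mu_{x}^p,\mu_{y}^p)\ \le\ \sum_{i=0}^{D-1}W_1(\mu_{x_i}^p,\mu_{x_{i+1}}^p)\ =\ D-\sum_{i=0}^{D-1}\kappa_p(x_i,x_{i+1}).
\]
For the opposite inequality, note $\mu_v^1=\delta_v$ and that transporting, for each neighbour $w$ of $v$, the mass $(1-p)/d_v$ from $w$ back to $v$ is a plan from $\mu_v^p$ to $\mu_v^1$ of cost $1-p$, so $W_1(\mu_v^p,\mu_v^1)\le 1-p$; combined with $W_1(\mu_x^1,\mu_y^1)=d(x,y)=D$ and two applications of the triangle inequality,
\[
W_1(\mu_x^p,\mu_y^p)\ \ge\ W_1(\mu_x^1,\mu_y^1)-W_1(\mu_x^1,\mu_x^p)-W_1(\mu_y^p,\mu_y^1)\ \ge\ D-2(1-p).
\]

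Comparing the two displays yields $\sum_{i=0}^{D-1}\kappa_p(x_i,x_{i+1})\le 2(1-p)$; dividing by $1-p>0$ and then letting $p\to 1$ (the sum runs over a fixed finite index set, so the limit passes inside term by term) gives $\sum_{i=0}^{D-1}\kappa_{LLY}(x_i,x_{i+1})\le 2$, and since each term is $\ge k$ we conclude $Dk\le 2$, i.e. $d(x,y)\le 2/k$. Taking the supremum over all pairs $x,y$ completes the proof.

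I expect the crux to be the lower estimate $W_1(\mu_x^p,\mu_y^p)\ge D-2(1-p)$ together with the limiting argument: this is the step encoding the rigidity that makes local curvature control global distance, and it hinges on the explicit transport plan realizing $W_1(\mu_v^p,\mu_v^1)\le 1-p$ and on the fact that perturbing $\delta_x,\delta_y$ into $\mu_x^p,\mu_y^p$ moves each measure by at most $1-p$. Everything else — the triangle inequality along the geodesic, the identity $W_1(\mu_{x_i}^p,\mu_{x_{i+1}}^p)=1-\kappa_p(x_i,x_{i+1})$, and the finiteness deduction — is routine; one could optionally streamline the limiting step using Ollivier's concavity of $p\mapsto\kappa_p(x,y)$, which makes $\kappa_p(x_i,x_{i+1})/(1-p)$ converge monotonically to $\kappa_{LLY}(x_i,x_{i+1})$.
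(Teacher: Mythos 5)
Your proof is correct and is essentially the standard argument from the cited source \cite{LLY11}: the paper itself states Theorem \ref{LLYBM} without proof, and your combination of the two-sided $W_1$ estimates (triangle inequality along a geodesic versus $W_1(\mu_v^p,\delta_v)\le 1-p$) followed by dividing by $1-p$ and letting $p\to 1$ is exactly how Lin--Lu--Yau derive the bound $\mathrm{diam}(G)\le 2/k$, with finiteness then following from local finiteness.
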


\subsection{Bakry--\'Emery curvature of signed graphs}
 A signed graph $(G,\sigma)$ is a graph $G=(V,E)$ associated with a signature $\sigma:E\rightarrow \{+1,-1\}$. The signature of a cycle in $G$ is defined to be the product of all signatures on edges of this cycle. 
A signed graph $(G,\sigma)$ is called \emph{balanced} if the signature of each cycle of $G$ is $+1$, and \emph{anti-balanced} if the signature of each even cycle is $+1$ and of each odd cycle is $-1$ \cite{Harary}. 

An important characterization of balanced and anti-balanced signatures is given via the following concept of switching equivalence.
\begin{definition}[Switching equivalence]
    Let $G=(V,E)$ be a graph. Two signatures $\sigma,\sigma'$ of $G$ are said to be \emph{switching equivalent} if there exists a function $\tau:V\rightarrow \{+1,-1\}$, called a \emph{switching function}, such that for each edge $xy\in E$,
    $$\sigma'(xy)=\tau(x)\sigma(xy)\tau(y).$$
\end{definition}

Indeed, a signed graph $(G,\sigma)$ is balanced if and only if the signature $\sigma$ is switching equivalent to the all-$+1$ signature and is anti-balanced if and only if the signature is switching equivalent to the all-$-1$ signature \cite{Zaslavsky}. 

For an edge $xy\in E$, We will write $\sigma_{xy}=\sigma(xy)$ for short.

Next, we recall the connection Laplacian and the corresponding Bakry--\'Emery curvature on a signed graph \cite{BE,LMP}. The connection Laplacian $\Delta^\sigma$ is defined as below. For any function $f:V\rightarrow \mathbb{R}$ and any vertex $x\in V$, 
$$\Delta^{\sigma}f(x):=\sum_{y:\,xy\in E}
(\sigma_{xy}f(y)-f(x)).$$
In matrix form, we have $\Delta^\sigma=A^\sigma-D$, where $A^\sigma$ is the signed adjacency matrix and $D$ is the diagonal degree matrix. In case that $\sigma\equiv +1$ is the all-$+1$ signature, we write $\Delta$ instead of $\Delta^\sigma$ for short.

Following \cite{LMP}, we define the $\Gamma^\sigma$-operator and $\Gamma^\sigma_{2}$-operator as follows. For any functions $f,g:V\rightarrow \mathbb{R}$ and any vertex $x\in V$, we have
\begin{align*}
2\Gamma^\sigma(f,g)(x):=&\Delta(fg)(x)-(\Delta^\sigma f)g(x)-f(\Delta^\sigma g)(x).\\
2\Gamma_{2}^{\sigma}(f,g)(x):=&\Delta\Gamma^\sigma(f,g)(x)-\Gamma^\sigma(\Delta^\sigma f,g)(x)-\Gamma^\sigma(f,\Delta^\sigma g)(x).
\end{align*}
Notice in the above definitions that both the connection Laplacians $\Delta^\sigma$ for the signature $\sigma$ and $\Delta$ for the all-$+1$ signature are involved.
For simplicity, we denote
    \[\Gamma^{\sigma}(f)(x):=\Gamma^{\sigma}(f,f)(x),\,\,
    \Gamma^{\sigma}_{2}(f)(x):=\Gamma^{\sigma}_{2}(f,f)(x).\]

\begin{definition}[Bakry--\'Emery curvature \cite{BE,LMP}] Let $(G,\sigma)$ be a signed graph. We say a vertex $x$ satisfies the Bakry--\'Emery curvature dimension inequality $CD^{\sigma}(K,N)$ for $K\in \mathbb{R}$ and $N\in (0,\infty]$, if 
 it holds for any function $f:V\rightarrow{\mathbb{R}}$ that
    \begin{equation}\label{curvatureineq}
    \Gamma_{2}^\sigma(f)(x)\geq\frac{1}{N}(\Delta^\sigma f)^{2}(x)+K\Gamma^\sigma(f)(x).
    \end{equation}
For $N\in (0,\infty]$, we define the $N$-Bakry--\'Emery curvature at a vertex $x$ of $(G,\sigma)$ as
    \[K_{BE}(G,\sigma, x,N):=\sup\{K: CD^\sigma(K,N) \text{ holds at } x\}.\]
\end{definition}
We are particularly interested in the case $N=\infty$, for which we use the convention that $\frac{1}{N}=0$. 

\begin{proposition}[Switching invariance \cite{LMP}]\label{prop:switching}
Let $(G,\sigma)$ be a signed graph with $G=(V,E)$. Let $\tau:V\to \{+1,-1\}$ be a switching function and $\sigma^\tau$ be the signature such that
$\sigma^{\tau}_{xy}:=\tau(x)\sigma_{xy}\tau(y)$ for any $xy\in E$.
Then $(G,\sigma)$ satisfy $CD^{\sigma}(K,N)$ at $x$ if and only if $(G,\sigma^\tau)$ satisfy $CD^{\sigma^\tau}(K,N)$ at $x$. That is, we have 
$$K_{BE}(G,\sigma,x,N)=K_{BE}(G,\sigma^\tau,x,N).$$
\end{proposition}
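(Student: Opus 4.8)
The plan is to realise the switching operation as a conjugation of the connection Laplacian by the diagonal involution attached to $\tau$, and then to observe that this conjugation leaves untouched the two ``unsigned'' ingredients entering the $\Gamma$-calculus. First I would introduce the linear operator $T_\tau$ acting on functions $f\colon V\to\mathbb{R}$ by $(T_\tau f)(x):=\tau(x)f(x)$. Since $\tau$ takes values in $\{+1,-1\}$ we have $\tau(x)^2=1$ everywhere, so $T_\tau$ is an involution, $T_\tau^2=\mathrm{id}$, and in particular a bijection of the space of real functions on $V$. The first computation is the conjugation identity $\Delta^{\sigma^\tau}(T_\tau f)=T_\tau(\Delta^\sigma f)$, i.e. $\Delta^{\sigma^\tau}=T_\tau\,\Delta^\sigma\,T_\tau$: in $\sum_{y:\,xy\in E}(\sigma^\tau_{xy}(T_\tau f)(y)-(T_\tau f)(x))$ the factor $\tau(y)$ coming from $\sigma^\tau_{xy}=\tau(x)\sigma_{xy}\tau(y)$ cancels the factor $\tau(y)$ from $(T_\tau f)(y)$, leaving $\tau(x)(\sigma_{xy}f(y)-f(x))$ inside the sum.

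Next I would record the elementary fact that $T_\tau$ acts trivially on pointwise products: $(T_\tau f)(T_\tau g)=\tau^2fg=fg$ as functions on $V$. This is the only place where the hypothesis $\tau(V)\subseteq\{+1,-1\}$ is used, and it is the one subtle point. With it, substituting $(f,g)\mapsto(T_\tau f,T_\tau g)$ into the defining identity $2\Gamma^\sigma(f,g)=\Delta(fg)-(\Delta^\sigma f)g-f(\Delta^\sigma g)$ — in which $\Delta$ is the all-$+1$ Laplacian, which does \emph{not} get conjugated — gives $\Delta((T_\tau f)(T_\tau g))=\Delta(fg)$ for the first term, $(\Delta^{\sigma^\tau}T_\tau f)(T_\tau g)=(T_\tau\Delta^\sigma f)(T_\tau g)=(\Delta^\sigma f)g$ for the second (using the conjugation identity together with $\tau^2=1$), and likewise $f(\Delta^\sigma g)$ for the third; hence $\Gamma^{\sigma^\tau}(T_\tau f,T_\tau g)=\Gamma^\sigma(f,g)$ as functions. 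Feeding this, together with $\Delta^{\sigma^\tau}(T_\tau f)=T_\tau(\Delta^\sigma f)$, into $2\Gamma_2^\sigma(f,g)=\Delta\Gamma^\sigma(f,g)-\Gamma^\sigma(\Delta^\sigma f,g)-\Gamma^\sigma(f,\Delta^\sigma g)$ and arguing term by term in the same way yields $\Gamma_2^{\sigma^\tau}(T_\tau f,T_\tau g)=\Gamma_2^\sigma(f,g)$. Specialising to $g=f$ and evaluating at the vertex $x$, I then obtain the pointwise identities $\Gamma^{\sigma^\tau}(T_\tau f)(x)=\Gamma^\sigma(f)(x)$, $\Gamma_2^{\sigma^\tau}(T_\tau f)(x)=\Gamma_2^\sigma(f)(x)$, and $(\Delta^{\sigma^\tau}T_\tau f)^2(x)=(\tau(x)\,\Delta^\sigma f(x))^2=(\Delta^\sigma f)^2(x)$.

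Finally I would translate these into the curvature–dimension inequalities. Because $T_\tau$ is a bijection with $T_\tau^{-1}=T_\tau$, demanding the inequality in $CD^{\sigma^\tau}(K,N)$ for \emph{all} test functions $h$ is equivalent to demanding it for all $h$ of the form $h=T_\tau f$; and for such $h$ the three quantities occurring in it, namely $\Gamma_2^{\sigma^\tau}(h)(x)$, $(\Delta^{\sigma^\tau}h)^2(x)$ and $\Gamma^{\sigma^\tau}(h)(x)$, coincide respectively with $\Gamma_2^\sigma(f)(x)$, $(\Delta^\sigma f)^2(x)$ and $\Gamma^\sigma(f)(x)$ by the previous paragraph. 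Thus $CD^{\sigma^\tau}(K,N)$ at $x$ is literally the same family of inequalities (indexed by the test function) as $CD^\sigma(K,N)$ at $x$, so one holds if and only if the other does; taking the supremum over all admissible $K$ then gives $K_{BE}(G,\sigma,x,N)=K_{BE}(G,\sigma^\tau,x,N)$. There is no genuine obstacle in this plan: the whole proof is bookkeeping organised around the single identity $\Delta^{\sigma^\tau}=T_\tau\Delta^\sigma T_\tau$, and the only step that really uses anything — that the unsigned Laplacian and the pointwise product are unchanged under $f\mapsto T_\tau f$ — reduces to $\tau^2\equiv 1$.
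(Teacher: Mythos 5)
Your argument is correct and complete: the conjugation identity $\Delta^{\sigma^\tau}=T_\tau\Delta^\sigma T_\tau$, together with $\tau^2\equiv 1$ (which makes the unsigned Laplacian term $\Delta(fg)$ and all pointwise products invariant under $f\mapsto T_\tau f$), gives $\Gamma^{\sigma^\tau}(T_\tau f)(x)=\Gamma^\sigma(f)(x)$, $\Gamma_2^{\sigma^\tau}(T_\tau f)(x)=\Gamma_2^\sigma(f)(x)$ and $(\Delta^{\sigma^\tau}T_\tau f)^2(x)=(\Delta^\sigma f)^2(x)$, and the bijectivity of $T_\tau$ converts these into the equivalence of the two $CD$ conditions. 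The paper itself states this proposition with a citation to \cite{LMP} and offers no proof, but your argument is precisely the standard one behind that reference, so there is nothing to add.
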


In this paper, we are concerned with the $\infty$-Bakry--\'Emery curvature of a graph associated with either the all-$+1$ or the all-$-1$ signature. We will write the corresponding Bakry--\'Emery curvature as
\[K_{BE}(+,x):=K(G,+,x,\infty),\,\,\text{and}\,\,K_{BE}(-,x):=K_{BE}(G,-,x,\infty),\]
for short.  

The following diameter estimate via Bakry--\'Emery curvature is another analogue of the Bonnet-Myers theorem in Riemannian geometry.
\begin{theorem}[Discrete Bonnet--Myers theorem via Bakry--\'Emery curvature \cite{LMP18}]\label{BEBM} Let $G=(V,E)$ be a locally finite connected graph. If $K_{BE}(+,x)\geq K>0$ holds true for any vertex $x\in V$, then the graph is finite and the diameter satisfies
\[\mathrm{diam}(G)\leq \frac{2d_{\max}}{K},\]
where $d_{\max}$ stands for the maximal vertex degree.
\end{theorem}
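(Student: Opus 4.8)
The plan is to run the standard Bakry--\'Emery heat-semigroup argument, arranged so that it produces exactly the constant $2d_{\max}/K$. First I would dispose of a triviality: if $d_{\max}=\infty$ there is nothing to prove, so assume $d_{\max}<\infty$. Then the Laplacian $\Delta$ is a bounded operator on $\ell^\infty(V)$, the heat semigroup $P_t:=e^{t\Delta}$ is well defined, it solves $\partial_t P_tf=\Delta P_tf=P_t\Delta f$, and it is Markovian: $P_t\geq 0$, $P_t\mathbf 1=\mathbf 1$, $\|P_t\|_{\ell^\infty\to\ell^\infty}\le 1$. Fix a vertex $x_0$; it suffices to prove $\dist(x_0,y)\le 2d_{\max}/K$ for every $y\in V$, since a connected locally finite graph of bounded diameter is finite.

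The first substantial step is the semigroup gradient estimate
\[
\Gamma(P_tf)(x)\;\le\;e^{-2Kt}\,P_t(\Gamma f)(x)
\]
valid for all bounded $f$, all $x\in V$ and all $t\ge 0$. This is the usual $\Gamma_2$-interpolation: with $\phi(s):=P_s\big(\Gamma(P_{t-s}f)\big)(x)$ on $[0,t]$, one computes from the heat equation and the identity $2\Gamma_2(g)=\Delta\Gamma(g)-2\Gamma(g,\Delta g)$ that $\phi'(s)=2P_s\big(\Gamma_2(P_{t-s}f)\big)(x)$; since $CD^{+}(K,\infty)$ holds at every vertex and $P_s$ is positivity preserving, $\phi'\ge 2K\phi$, hence $\phi(t)\ge e^{2Kt}\phi(0)$, which is the claimed inequality. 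In particular, as $\Gamma f\ge 0$ and $P_s\mathbf 1=\mathbf 1$, we get $\|\Gamma(P_tf)\|_\infty\le e^{-2Kt}\|\Gamma f\|_\infty$.

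The second step is a discrete Laplacian comparison for a truncated distance function. For $R>0$ set $f=f_R:=\min\{\dist(x_0,\cdot),R\}$; this is bounded, $1$-Lipschitz, and vanishes at $x_0$, so $\Gamma f\le d_{\max}/2$ everywhere. The elementary Cauchy--Schwarz bound $(\Delta g(x))^2\le d_x\sum_{w\sim x}(g(w)-g(x))^2=2d_x\Gamma g(x)$, applied to $g=P_sf$ together with the gradient estimate, yields $|\Delta P_sf(x)|\le d_{\max}e^{-Ks}$, and integrating the heat equation gives $|P_tf(x)-f(x)|\le\frac{d_{\max}}{K}(1-e^{-Kt})$ at every $x$ (in particular at $x_0$ and $y$). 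Moreover, for adjacent $u\sim v$ one has $|P_tf(u)-P_tf(v)|\le\sqrt{2\Gamma(P_tf)(u)}\le\sqrt{d_{\max}}\,e^{-Kt}$, so telescoping along a shortest path from $x_0$ to $y$ of length $m:=\dist(x_0,y)$ gives $|P_tf(y)-P_tf(x_0)|\le m\sqrt{d_{\max}}\,e^{-Kt}$. Choosing $R\ge m$, so that $f_R(x_0)=0$ and $f_R(y)=m$, and combining the bounds $m=f_R(y)\le P_tf(y)+\frac{d_{\max}}{K}$, $P_tf(y)\le P_tf(x_0)+m\sqrt{d_{\max}}\,e^{-Kt}$, and $P_tf(x_0)\le\frac{d_{\max}}{K}$, we arrive at
\[
m\Big(1-\sqrt{d_{\max}}\,e^{-Kt}\Big)\;\le\;\frac{2d_{\max}}{K}.
\]
Since $m<\infty$, letting $t\to\infty$ gives $\dist(x_0,y)=m\le 2d_{\max}/K$. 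As $x_0$ and $y$ were arbitrary, $\diam(G)\le 2d_{\max}/K$, and hence $G$ is finite.

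I expect the main obstacle to be technical bookkeeping rather than a new idea: justifying the term-by-term differentiation and the commutations of $P_s$ with $\Delta$, $\Gamma$ and $\Gamma_2$ in the gradient-estimate step (which is routine once we have reduced to $d_{\max}<\infty$, so that $\Delta$ is bounded), and correctly handling the a priori possibility that $\diam(G)=\infty$ --- which is exactly the point of working with the truncations $f_R$ and letting $R\to\infty$ only at the very end. The one genuinely non-automatic choice, taking the (truncated) distance function $\dist(x_0,\cdot)$ as the test function, is precisely what makes the constant $2d_{\max}/K$ fall out.
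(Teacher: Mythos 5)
Your proof is correct; note that the paper itself does not prove this theorem but imports it from [LMP18], and your argument is essentially the same semigroup proof given there: the $\Gamma_2$-interpolation yielding $\Gamma(P_tf)\le e^{-2Kt}P_t\Gamma f$, the Cauchy--Schwarz bound $(\Delta g)^2\le 2d_x\Gamma g$ applied to a $1$-Lipschitz (truncated) distance function, and integration of the heat equation. Your telescoping bound $|P_tf(y)-P_tf(x_0)|\le m\sqrt{d_{\max}}e^{-Kt}$ is a clean way to handle the middle term without first establishing convergence of $P_tf$ to a constant.
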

\begin{remark}
    We point out that similar diameter estimate as above using  Bakry--\'Emery curvature of a general signature $\sigma$ dose NOT hold any more \cite[Example 8.1]{HL22}. 
    %an anti-balanced regular graph $G=(V,E)$ with positive curvature $K_{BE}(-,x)=\frac{7-\sqrt{17}}{4}>0$ at each vertex $x\in V$ and diameter $\text{diam}(G)=\infty$ is constructed. 
    On the other hand, Bakry--\'Emery curvature on general signed graphs is quite useful for studying eigenvalue estimates and heat semigroup properties \cite{LMP}. 
\end{remark}

The two discrete Ricci curvature notions, Bakry--\'Emery curvature and Lin--Lu--Yau curvature, are naturally related. However, they can also be very different. There are special cases in which the two curvature values have opposite signs. \cite{CKKLP} shows more comparison results between these two curvature notions.

\subsection{Amply regular graphs}
We recall the definition of an amply regular graph from \cite[Section 1.1]{BCN89}.
       \begin{definition}[Amply regular graph \cite{BCN89}]\label{ARG} Let $G$ be a $d$-regular graph with $n$ vertices. Then $G$ is called an amply regular graph with parameters $(n, d,$ $\alpha, \beta)$ if any two adjacent vertices have $\alpha$ common neighbors, and any two vertices with distance $2$ have $\beta$ common neighbors.
         \end{definition}
In this paper, we allow $n=\infty$ and restrict ourselves to connected amply regular graph, which is neither complete nor empty. Then, we naturally have $d\geq \alpha+2$. The following degree estimates will also be useful for our later purpose.
\begin{theorem}[{\cite[Theorem 1.2.3 and Corollary 1.2.4]{BCN89}}]\label{thm:1.2.3}
    Let $G$ be a connected amply regular graph with parameters $(n,d,\alpha,\beta)$. If $G$ has diameter at least three and $d<3(\alpha-\beta+2)$, then $G$ is the icosahedron or the line graph of a regular graph of girth at least $5$.
\end{theorem}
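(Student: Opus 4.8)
The plan is to reduce everything to the structure of the local graphs. Fix a vertex $x$ and let $\Gamma(x)$ be the subgraph induced on its $d$ neighbours. Since adjacent vertices of $G$ have $\alpha$ common neighbours, $\Gamma(x)$ is $\alpha$-regular; and since vertices at distance $2$ have $\beta$ common neighbours, any two nonadjacent $y,w\in\Gamma(x)$ have at most $\beta-1$ common neighbours inside $\Gamma(x)$ (because $x$ is a common neighbour of $y$ and $w$ in $G$ that does not lie in $\Gamma(x)$), and at least $2\alpha-d+2$ of them (from $|\Gamma_{\Gamma(x)}(y)\cup\Gamma_{\Gamma(x)}(w)|\le d-2$). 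I would begin by recording these two inequalities, together with the elementary remark that $\Gamma(x)$ is a disjoint union of cliques precisely when it contains no induced path on three vertices, and that if it is such a union then $\alpha$-regularity forces each clique to be a $K_{\alpha+1}$, with $t:=d/(\alpha+1)$ of them.

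The core of the argument — and the step I expect to be the main obstacle — is to show that the hypothesis $d<3(\alpha-\beta+2)$ forces, at every vertex $x$, the local graph $\Gamma(x)$ to be either a disjoint union of copies of $K_{\alpha+1}$ or a pentagon $C_5$ (and the latter only when $\alpha=2$, $d=5$, $\beta\le2$, which in particular cannot coexist with the clique case since $3\nmid 5$). The first-moment bound already shows that as soon as $\Gamma(x)$ has a nonedge one has $2\alpha-d+2\le\beta-1$, i.e. $d\ge 2\alpha-\beta+3$, which together with the hypothesis only yields $\alpha\ge 2\beta-2$; to close the remaining gap one has to analyse the neighbourhood of an induced three-vertex path in $\Gamma(x)$ much more carefully — for instance by double counting nonadjacent pairs, or by tracking how the two endpoints of such a path interact with the rest of $\Gamma(x)$ — so as to exclude every $\alpha$-regular graph on $d$ vertices other than $\bigsqcup K_{\alpha+1}$ and $C_5$. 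One may equivalently phrase this as showing the bound forces $\Gamma(x)$ to have smallest adjacency eigenvalue at least $-2$ and then appealing to the classification of regular graphs with this property. This is essentially the casework carried out in \cite{BCN89}, and it is where almost all the work lies.

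Granting the dichotomy, the two remaining cases are routine. If every $\Gamma(x)$ is a $C_5$, then $G$ is connected and locally $C_5$, and the unique such graph is the icosahedron, whose diameter is $3$; I would verify this by growing $G$ from one edge and checking that the pentagonal links force the $12$-vertex structure. If instead every $\Gamma(x)$ is a disjoint union of $t$ copies of $K_{\alpha+1}$, then every edge of $G$ lies in a unique maximal clique, each such clique has size $\alpha+2$, and each vertex lies in exactly $t$ of them; since $\beta\ge1$ the hypothesis gives $d<3(\alpha+1)$, hence $t\le2$, and $t=1$ would make $\{x\}\cup\Gamma(x)$ a connected component equal to $K_{\alpha+2}$ of diameter $1$, which is excluded. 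Thus $t=2$, and by Krausz's characterization of line graphs $G=L(H)$, where $H$ is the graph whose vertices are the maximal cliques of $G$ and whose edges are the vertices of $G$. Finally I would read amply-regularity back through this construction: $d$-regularity of $G$ rules out degree-$1$ vertices in $H$ and then forces $H$ to be $k$-regular with $d=2k-2$ and $\alpha=k-2$; a triangle in $H$ would place some vertex of $G$ in three maximal cliques, contradicting $t=2$, and a $4$-cycle in $H$ would create a pair of vertices at distance $2$ in $G$ with two common neighbours while generic such pairs have only one, contradicting the constancy of $\beta$. Hence $H$ has girth at least $5$, and $G$ is the icosahedron or the line graph of a regular graph of girth at least $5$, the diameter hypothesis having been used only to discard $K_{\alpha+2}$.
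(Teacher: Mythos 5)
The paper offers no proof of this statement at all: it is imported verbatim from Brouwer--Cohen--Neumaier \cite{BCN89} and used as a black box, so the only question is whether your argument stands on its own. It does not, because the step carrying essentially all of the content is missing. You correctly set up the local analysis (the local graph $\Gamma(x)$ is $\alpha$-regular, and any nonadjacent pair in it has between $2\alpha-d+2$ and $\beta-1$ common neighbours inside $\Gamma(x)$), but this only yields $d\ge 2\alpha-\beta+3$, hence $\alpha\ge 2\beta-2$, as you note. The claimed dichotomy --- that $d<3(\alpha-\beta+2)$ forces $\Gamma(x)$ to be a disjoint union of copies of $K_{\alpha+1}$ or a pentagon --- is then asserted, three possible strategies are named, and none is carried out; ``this is essentially the casework carried out in \cite{BCN89}'' is a deferral, not a proof. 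Worse, the one strategy you make precise would not suffice even if executed: connected $\alpha$-regular graphs with smallest adjacency eigenvalue at least $-2$ include all line graphs of regular and biregular graphs, the cocktail-party graphs, and the exceptional graphs arising from the $E_8$ root system --- a vastly larger class than $\{$disjoint unions of cliques, $C_5\}$ --- so the spectral reduction would still leave most of the classification to do, and you have not proved the eigenvalue bound either.

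The endgame also contains a genuine error, not just hand-waving. Your exclusion of $4$-cycles in $H$ rests on the claim that ``generic'' distance-two pairs in $L(H)$ have one common neighbour; this is false. Take $H=K_{k,k}$ with $k\ge 5$: then $L(H)$ is connected, amply regular with parameters $(k^2,\,2k-2,\,k-2,\,2)$, satisfies $2k-2<3((k-2)-2+2)$, and is locally a disjoint union of two cliques $K_{k-1}$, yet $H$ has girth $4$ and \emph{every} distance-two pair in $L(H)$ has exactly two common neighbours, so no contradiction with the constancy of $\beta$ arises. The only thing that excludes this family is the diameter hypothesis ($L(K_{k,k})$ has diameter $2$), which contradicts your closing remark that the diameter hypothesis is used only to discard $K_{\alpha+2}$; the girth-$\ge 5$ step must invoke it, and doing so correctly requires an additional argument you have not supplied. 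A smaller issue in the same passage: constancy of the vertex degree of $L(H)$ does not by itself force $H$ to be regular (biregular $H$ also produces regular line graphs); regularity of $H$ should instead be read off from the fact that all maximal cliques of $G$ have the common size $\alpha+2$.
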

\begin{theorem}[{\cite[Theorem 1.9.3]{BCN89}}]\label{thm:1.9.3}
    Let $G$ be a connected amply regular graph of diameter at least $4$ with parameters $(n,d,\alpha,\beta)$, then $d\ge 2\beta$.
\end{theorem}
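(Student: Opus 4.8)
The plan is to exhibit, inside the neighbourhood of a single vertex, two disjoint families of $\beta$ vertices each, and then compare with the degree $d$. Since $\mathrm{diam}(G)\geq 4$, there is a shortest path $x_0\sim x_1\sim x_2\sim x_3\sim x_4$ in $G$ with $d(x_0,x_4)=4$; being subpaths of a geodesic, we have $d(x_0,x_2)=d(x_2,x_4)=2$. As $x_0$ and $x_2$ are at distance $2$ and $G$ is amply regular, the set $A:=S_1(x_0)\cap S_1(x_2)$ of their common neighbours has $|A|=\beta$ (and $x_1\in A$); similarly $A':=S_1(x_2)\cap S_1(x_4)$ has $|A'|=\beta$ (and $x_3\in A'$). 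Both $A$ and $A'$ are subsets of the $d$-element set $S_1(x_2)$.

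The key step is that $A\cap A'=\emptyset$. Indeed, any $z\in A\cap A'$ would be adjacent to both $x_0$ and $x_4$, giving $d(x_0,x_4)\leq d(x_0,z)+d(z,x_4)=2$, contradicting $d(x_0,x_4)=4$. Hence $A$ and $A'$ are disjoint subsets of $S_1(x_2)$, so $2\beta=|A|+|A'|=|A\cup A'|\leq |S_1(x_2)|=d$, which is the claimed inequality.

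There is essentially no computation here, so the only real ``obstacle'' is choosing the right configuration. The natural first attempt is to use the distance-$2$ pair $\{x_1,x_3\}$, but its $\beta$ common neighbours need not all lie in $S_1(x_2)$, so the count against $d$ fails. The correct move is to use instead the two distance-$2$ pairs $\{x_0,x_2\}$ and $\{x_2,x_4\}$ sharing the middle vertex $x_2$, which forces both families of common neighbours into $S_1(x_2)$; the hypothesis $\mathrm{diam}(G)\geq 4$ is used precisely to guarantee $d(x_0,x_4)>2$ and hence the disjointness. (For diameter $3$ one can only place $x_0,x_1,x_2,x_3$, and the two relevant distance-$2$ pairs no longer share a vertex, which is consistent with the bound being stated only for diameter at least $4$.)
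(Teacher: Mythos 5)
Your proof is correct. Note that the paper does not prove this statement itself --- it is quoted directly from Brouwer--Cohen--Neumaier \cite[Theorem 1.9.3]{BCN89} --- so there is no in-paper argument to compare against; your argument (placing the two $\beta$-element sets of common neighbours of the pairs $\{x_0,x_2\}$ and $\{x_2,x_4\}$ inside $S_1(x_2)$ and using $d(x_0,x_4)=4$ to force disjointness) is the standard one and is complete.
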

%%%%%%%%%%%%%%%%%%%%%%%%%%%%%%%%%%%%%%%%%%%%%%%%%%%%%%%%%%%%%
\section{Lin--Lu--Yau curvature of amply regular graphs}\label{section:LLY}
We first prove the following result.
\begin{theorem}\label{thm:beta_alpha_1}
Let $G=(V,E)$ be an amply regular graph with parameters $(n,d,\alpha,\beta)$ such that $1\neq\beta\geq \alpha$. Then the Lin--Lu--Yau curvature $\kappa_{LLY}(x,y)$ of any edge $xy\in E$ satisfies
\begin{equation}\label{eq:beta_alpha_1}
    \kappa_{LLY}(x,y)\geq\frac{2+\left\lceil\frac{\alpha(\beta-\alpha)}{\beta-1}\right\rceil}{d}.
\end{equation}
\end{theorem}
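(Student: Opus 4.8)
The plan is to use the limit-free reformulation $\kappa_{LLY}(x,y) = 2\kappa_{1/2}(x,y)$ from Theorem \ref{thm:BCMLP}, so that it suffices to construct an explicit transport plan $\pi$ from $\mu_x^{1/2}$ to $\mu_y^{1/2}$ whose cost is at most $d(x,y)\bigl(1 - \tfrac{1}{2d}(2+\lceil\alpha(\beta-\alpha)/(\beta-1)\rceil)\bigr) = 1 - \tfrac{1}{2d}(2+m)$, where I abbreviate $m := \lceil\alpha(\beta-\alpha)/(\beta-1)\rceil$. Alternatively, and perhaps more cleanly, I would go through Theorem \ref{thm:upperk}: the quantity $\tfrac{2+|\Delta_{xy}|}{d} = \tfrac{2+\alpha}{d}$ is the maximal possible value of $\kappa_{LLY}(x,y)$, attained iff there is a perfect matching between $N_x = S_1(x)\setminus(\Delta_{xy}\cup\{y\})$ and $N_y = S_1(y)\setminus(\Delta_{xy}\cup\{x\})$ (each of size $d-1-\alpha$). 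The idea is to quantify \emph{how much} of such a matching one can always guarantee: if one can always match at least $d-1-\alpha - r$ vertices of $N_x$ to distinct vertices of $N_y$ (leaving a ``deficiency'' of $r$), then a standard transport argument degrades the optimal curvature bound by a controlled amount, yielding $\kappa_{LLY}(x,y) \ge \tfrac{2+\alpha}{d} - \tfrac{2r}{d}$ or similar. So the real task is a combinatorial estimate on matchings in an auxiliary bipartite graph, and then bookkeeping to convert the matching deficiency into the claimed lower bound $\tfrac{2+m}{d}$, i.e. $\alpha - 2r = m$ up to the ceiling, so I expect to need a matching of $N_x$ into $N_y$ covering all but roughly $\tfrac{\alpha}{2} - \tfrac{m}{2} = \tfrac{\alpha(\alpha-1)}{2(\beta-1)}$ vertices.

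The key step is therefore the matching estimate, and this is where König's theorem (Theorem \ref{konig}) for regular bipartite \emph{multigraphs} enters. First I would set up the bipartite graph $H$ on parts $N_x$ and $N_y$ where $u \in N_x$ is joined to $v \in N_y$ with an edge for each way they ``interact'' — concretely, I expect $u$ and $v$ to be connected when $d(u,v)\le 1$, or via a weighting/multiplicity encoding the local combinatorial data (common-neighbor counts) of the amply regular structure. Using that $G$ is amply regular with parameters $(\alpha,\beta)$, I would show that $H$ (or a suitable subgraph/multigraph derived from it) is \emph{biregular}, with the common degree computable from $\alpha$ and $\beta$: each $u\in N_x$ should see, among $N_y$, a number of vertices dictated by $\beta$ (since $u$ is at distance $2$ from $y$, it has $\beta$ common neighbors with $y$, of which $\alpha$ ``used up'' near the edge $xy$ and the rest available), and symmetrically. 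Making this bipartite graph $d'$-regular for an explicit $d' \ge \beta - \alpha$ (roughly) and then invoking König gives a decomposition into $d'$ perfect matchings — but a perfect matching of a multigraph need not be a matching in $G$; so I would instead extract from one of these $d'$ matchings a large \emph{simple} matching in $G$, losing at most the multiplicity, which is where the fraction $\tfrac{\alpha(\beta-\alpha)}{\beta-1}$ — the ``ideal degree'' divided by the maximal multiplicity — and hence the ceiling, comes from.

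Once the matching deficiency bound is in hand, the remaining step is routine: split $\mu_x^{1/2}$ and $\mu_y^{1/2}$ over the sets $\{x\}, \{y\}, \Delta_{xy}, N_x, N_y$; transport mass on $\Delta_{xy}\cup\{x,y\}$ at zero or distance-one cost as in the proof of the optimal bound; transport the matched part of $N_x$ to $N_y$ at distance one; and absorb the unmatched $r$ vertices at an extra cost of at most $1$ each (distance at most $2$), contributing the penalty $\tfrac{2r}{d}$ to the Wasserstein distance and hence $-\tfrac{2r}{d}$ to the curvature. Dualizing via a $1$-Lipschitz function, or directly optimizing the plan, then gives \eqref{eq:beta_alpha_1}. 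The main obstacle I anticipate is the first step: verifying the exact biregularity of the auxiliary bipartite graph and pinning down the correct edge multiplicities so that König's theorem yields precisely a guaranteed simple matching of size $d - 1 - \alpha - \lceil \alpha(\alpha-1)/(2(\beta-1))\rceil$ — the amply-regular identities have to be manipulated carefully here, and the $\alpha=\beta$ boundary case (where the bound should reduce to $\tfrac{2}{d}$, consistent with hypercubes) needs a separate sanity check. The case $\beta = 2$, $\alpha < 2$, and the excluded $\alpha=\beta=2$ will also need to be tracked through the estimate, though these fall outside the hypothesis $1 \neq \beta \ge \alpha$ in only the degenerate way already flagged in the paper.
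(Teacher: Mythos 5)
Your overall strategy --- build an explicit transport plan whose cost is controlled by a matching extracted via K\"onig's theorem from a regular bipartite multigraph, with the term $\frac{\alpha(\beta-\alpha)}{\beta-1}$ produced by averaging over the $\beta-1$ perfect matchings --- is the right one and is essentially the paper's. But the two concrete steps your plan leans on both fail as stated. First, the bipartite graph with parts $N_x$ and $N_y$, joining $u$ to $w$ when $uw\in E$, is \emph{not} regular: each $u\in N_x$ has exactly $\beta-1$ neighbours in $N_y\cup\Delta_{xy}$ (it has $\beta$ common neighbours with $y$, one of which is $x$), but the split between $N_y$ and $\Delta_{xy}$ varies from vertex to vertex; when $\alpha\ge\beta-1$ a vertex of $N_x$ may have \emph{no} neighbour in $N_y$ at all, so no assignment of multiplicities to existing $N_x$--$N_y$ edges can repair regularity, and K\"onig's theorem cannot be applied to this graph. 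Second, your fallback of absorbing the $r$ unmatched vertices ``at distance at most $2$'' is unjustified: for $v\in N_x$ and $w\in N_y$ one only has $d(v,w)\le 3$ (and $3$ is attained, e.g.\ in hypercubes), so a direct transport of leftover mass costs an extra $2$ per unmatched vertex, and with that penalty your own bookkeeping requires a direct $N_x$--$N_y$ matching of deficiency about $\frac{\alpha(\alpha-1)}{2(\beta-1)}$ --- a statement your setup does not deliver and which I do not see how to prove.

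The missing idea is to enlarge the bipartite graph so that mass is allowed to route \emph{through} $\Delta_{xy}$. The paper takes the two sides to be $N_x\cup\Delta_{xy}$ and $N_y\cup\Delta'_{xy}$, where $\Delta'_{xy}=\{z'_1,\dots,z'_\alpha\}$ is a disjoint copy of $\Delta_{xy}$; it inserts the $G$-edges between the two sides (identifying $z'_i$ with $z_i$), the edges $z_iz'_j$ for $z_iz_j\in E$, and, crucially, $\beta-\alpha$ \emph{parallel} edges between each $z_i$ and its own copy $z'_i$. The amply regular conditions make this multigraph exactly $(\beta-1)$-regular, K\"onig gives $\beta-1$ edge-disjoint perfect matchings, and since the parallel edges number $\alpha(\beta-\alpha)$ in total, some matching $\mathcal M$ contains at least $\lceil\alpha(\beta-\alpha)/(\beta-1)\rceil$ of them. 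Read as a transport plan (with idleness $\frac{1}{d+1}$), a parallel edge $z_iz'_i\in\mathcal M$ means the mass at $z_i$ stays put (cost $0$), while every other edge of $\mathcal M$ moves one unit of mass along a single edge of $G$ (cost $1$); a vertex $v\in N_x$ matched into $\Delta'_{xy}$ sends its mass to the corresponding $z_i$, whose own mass is forwarded elsewhere by $\mathcal M$ --- this is how the ``route at extra cost $1$'' is realized without ever moving any unit of mass more than one step. One gets $W_1\le\frac{1}{d+1}\left(d-1-\left\lceil\frac{\alpha(\beta-\alpha)}{\beta-1}\right\rceil\right)$ and hence \eqref{eq:beta_alpha_1} via Theorem \ref{thm:BCMLP}. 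Without the duplicated $\Delta_{xy}$ and the parallel edges, neither the regularity needed for K\"onig nor the cost accounting goes through.
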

We first explain the notations. We set $S_1(v):=\{w\in V|\, vw\in E\}$ and use the notations:$$\Delta_{xy}:=S_1(x)\cap S_1(y),\ 
     N_x:= S_1(x) \backslash (\Delta_{xy} \cup \{y\})\ {\rm and}\ N_y:= S_1(y) \backslash (\Delta_{xy} \cup \{x\}).$$
It follows by definition that $|\Delta_{xy}|=\alpha\ {\rm and}\ |N_x|=|N_y|=d-\alpha-1$.

As a preparation for the proof of Theorem \ref{thm:beta_alpha_1}, we construct from the local structure of the edge $xy\in E$ a bipartite multigraph $H_G = (V_H, E_H)$ as follows. Let us denote the $\alpha$ vertices in $\Delta_{xy}$ by $z_1, \cdots, z_\alpha$. The vertex set of $H_G$ is given by $$V_H=N_x\cup N_y\cup \Delta_{xy}\cup \Delta'_{xy}.$$Here $\Delta'_{xy}:=\{z'_1, \cdots, z'_\alpha\}$ is a new added set with $\alpha$ vertices, which is considered as a copy of $\Delta_{xy}$. The edge set $E_H:=\cup_{i=1}^5E_i$ is given by
\begin{align}\notag
&E_1=\{vw|v\in N_x, w\in N_y, vw\in E\},\\ \notag
&E_2=\{vz'_i|v\in N_x, z'_i\in \Delta'_{xy}, vz_i\in E\},\\ \notag
&E_3=\{z_iw| z_i\in \Delta_{xy},w\in N_y, z_iw\in E\},\\ \notag
&E_4=\{z_iz'_j| z_iz_j\in E,1\le i\le \alpha,1\le j\le \alpha \},\\ \notag
&E_5=\{e_i^j|e_i^j=z_iz'_i, 1\le i\le\alpha, 1\le j\le \beta-\alpha\},
\end{align}
where $E_5$ contains $\beta-\alpha$ number of parallel edges between $z_i$ and $z'_i$ for each $1\le i\le\alpha$. Notice that $\Delta_{xy}=\emptyset$ if $\alpha=0$ and $E_5=\emptyset$ if $\beta=\alpha$.
\begin{lemma}
$H_G$ is $(\beta-1)$-regular.
\end{lemma}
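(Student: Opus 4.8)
The plan is to exhibit the natural bipartition of $H_G$ and then read off the degree of each vertex directly from the defining parameters of $G$. First I would note that $H_G$ is bipartite with parts $A:=N_x\cup\Delta_{xy}$ and $B:=N_y\cup\Delta'_{xy}$: indeed $E_1$ joins $N_x$ to $N_y$, $E_2$ joins $N_x$ to $\Delta'_{xy}$, $E_3$ joins $\Delta_{xy}$ to $N_y$, and $E_4,E_5$ join $\Delta_{xy}$ to $\Delta'_{xy}$, so every edge runs between $A$ and $B$. It then remains to check that every vertex has degree exactly $\beta-1$, and since the construction is symmetric under interchanging $x$ and $y$ (which swaps $N_x\leftrightarrow N_y$, $z_i\leftrightarrow z_i'$, $E_2\leftrightarrow E_3$ and fixes $E_1,E_4,E_5$), it suffices to treat a vertex $v\in N_x$ and a vertex $z_i\in\Delta_{xy}$.

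For $v\in N_x$: since $vx\in E$ while $v\notin S_1(y)\cup\{y\}$, the vertex $v$ is at distance $2$ from $y$, so $v$ and $y$ have exactly $\beta$ common neighbours. These all lie in $S_1(y)$, which is the disjoint union $\{x\}\sqcup\Delta_{xy}\sqcup N_y$; the vertex $x$ is one of them, so $v$ has exactly $\beta-1$ neighbours in $\Delta_{xy}\cup N_y$. Its neighbours in $N_y$ are precisely the endpoints of its $E_1$-edges, and its neighbours $z_i$ in $\Delta_{xy}$ correspond precisely to its $E_2$-edges (to the copies $z_i'$); hence $\deg_{H_G}(v)=\beta-1$.

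For $z_i\in\Delta_{xy}$: since $z_iy\in E$, the vertices $z_i$ and $y$ have exactly $\alpha$ common neighbours, again all in $S_1(y)=\{x\}\sqcup\Delta_{xy}\sqcup N_y$ with $x$ among them, so $z_i$ has $\alpha-1$ neighbours in $\Delta_{xy}\cup N_y$; here I use that $G$ is loopless, so $z_iz_i\notin E$ and this count equals $|\{j:1\le j\le\alpha,\ z_iz_j\in E\}|+|\{w\in N_y:\ z_iw\in E\}|$, the first term counting the $E_4$-edges and the second the $E_3$-edges at $z_i$. Finally $E_5$ contributes $\beta-\alpha$ further parallel edges from $z_i$ to $z_i'$, giving $\deg_{H_G}(z_i)=(\alpha-1)+(\beta-\alpha)=\beta-1$; the case $z_i'\in\Delta'_{xy}$ is identical using $z_ix\in E$ in place of $z_iy\in E$. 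The whole argument is essentially bookkeeping, and the only points demanding care are peeling off $x$ (resp.\ $y$) as exactly one of the relevant common neighbours, keeping $E_4$ and $E_5$ disjoint via looplessness, and checking that the degenerate cases $\alpha=0$ (so $\Delta_{xy}=\Delta'_{xy}=\emptyset$) and $\beta=\alpha$ (so $E_5=\emptyset$) are subsumed by the same computation — the hypothesis $\beta\ge\alpha$ being exactly what makes the multiplicity $\beta-\alpha$ in $E_5$ meaningful.
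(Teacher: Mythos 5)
Your proof is correct and follows essentially the same route as the paper: for $v\in N_x$ you count the $\beta$ common neighbours of $v$ and $y$ and peel off $x$, and for $z_i\in\Delta_{xy}$ you count the $\alpha$ common neighbours of $z_i$ and $y$, peel off $x$, and add the $\beta-\alpha$ parallel edges of $E_5$. The extra care you take with the bipartition, looplessness, and the degenerate cases $\alpha=0$ and $\beta=\alpha$ is all consistent with the paper's (more terse) argument.
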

\begin{proof}
For any $v\in N_x$, we have $d(v, y) = 2$ in $G$.  Therefore, there are $\beta$ common neighbors of $v$ and $y$ in $V$, $\beta-1$ vertices of which lie in $N_y\cup \Delta_{xy}$. Thus, in the graph $H_G$, there are $\beta-1$ neighbors of $v$ in $N_y\cup \Delta'_{xy}$ by definition. Similarly, for any $w\in N_y$, there are $\beta-1$ neighbors of $w$ in $N_x\cup \Delta_{xy}$.

For any $z_i\in \Delta_{xy}$, there are $\alpha$ common neighbors of $z_i$ and $y$ in $V$, $\alpha-1$ vertices of which lie in $N_y\cup \Delta_{xy}$. Together with $\beta-\alpha$ parallel edges between $z_i$ and $z'_i$, there are $\beta-1$ neighbors of $z_i$ in $N_y\cup \Delta'_{xy}$ in the graph $H_G$. Similarly, for any $z'_i\in\Delta'_{xy}$, there are $\beta-1$ neighbors of $z'_i$ in $N_x\cup \Delta_{xy}$.
\end{proof}
\begin{proof}[Proof of Theorem \ref{thm:beta_alpha_1}]
By Theorem \ref{konig}, $E_H$ can be decomposed into $\beta-1$ edge-disjoint perfect matchings. Since $|E_5|=\alpha(\beta-\alpha)$, there exists a perfect matching $\mathcal M$ such that 
\begin{equation*}\label{ME}
    |\mathcal M\cap E_5|\ge \left\lceil\frac{\alpha(\beta-\alpha)}{\beta-1}\right\rceil.
\end{equation*}
We consider the following particular transport plan $\pi_0:V\times V\to [0,1]$ from $\mu_x^{\frac{1}{d+1}}$ to $\mu_y^{\frac{1}{d+1}}$:

\begin{center}
$\pi_0(v,w)=\begin{cases}
\frac{1}{d+1}, &{\rm if}\ v\in N_x\cup \Delta_{xy}, w\in N_y\ {\rm and}\ vw\in {\mathcal M};\\
\frac{1}{d+1}, &{\rm if}\ v\in N_x\cup \Delta_{xy},w\in \Delta_{xy}\ {\rm and}\ vw'\in {\mathcal M};\\
0, &{\rm otherwise}.
\end{cases}$
\end{center}

It is then direct to check that $\pi_0$ is indeed a transport plan. There are $|{\mathcal M}|$ pairs of $(v,w)$ such that $\pi_0(v,w)\ne 0$. Among them, there are $|{\mathcal M}\cap E_5|$ pairs with $d(v,w)=0$ and $|{\mathcal M}|-|{\mathcal M}\cap E_5|$ pairs with $d(v,w)=1$. Therefore, we have
\begin{align}\notag
W\left(\mu_x^{\frac{1}{d+1}},\mu_y^{\frac{1}{d+1}}\right)&\le \sum_{v\in V}\sum_{w\in V}d(v,w)\pi_0(v,w)\\ \notag
&= \frac{1}{d+1}(|{\mathcal M}|-|{\mathcal M}\cap E_5|)\\ \notag
&\le \frac{1}{d+1}\left(d-1-\left\lceil\frac{\alpha(\beta-\alpha)}{\beta-1}\right\rceil\right).
\end{align}
It follows by Theorem \ref{thm:BCMLP} that \begin{equation*}\label{3.2}
    \kappa_{LLY}(x,y)=\frac{d+1}{d}\left(1-W\left(\mu_x^{\frac{1}{d+1}},\mu_y^{\frac{1}{d+1}}\right)\right) \ge \frac{2+\left\lceil\frac{\alpha(\beta-\alpha)}{\beta-1}\right\rceil}{d},
\end{equation*}
completing the proof.
\end{proof}

\begin{corollary}\label{LLYd}
Let $G=(V,E)$ be an amply regular graph with parameters $(n,d,\alpha,\beta)$ such that $1\neq\beta\geq \alpha$. Then, the diameter of $G$ satisfies\[\mathrm{diam}(G)\leq \left\lfloor\frac{2d}{2+\left\lceil\frac{\beta-\alpha}{\beta-1}\alpha\right\rceil}\right\rfloor.\]
\end{corollary}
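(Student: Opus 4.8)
The plan is simply to feed the Lin--Lu--Yau curvature lower bound of Theorem \ref{thm:beta_alpha_1} into the discrete Bonnet--Myers estimate of Theorem \ref{LLYBM}. First I would record that, under the standing assumption $1\neq\beta\geq\alpha$, Theorem \ref{thm:beta_alpha_1} gives, for \emph{every} edge $xy\in E$,
\[
\kappa_{LLY}(x,y)\;\geq\; k:=\frac{2+\left\lceil\frac{\alpha(\beta-\alpha)}{\beta-1}\right\rceil}{d}.
\]
The one point that needs a word of care here is that $k>0$: since $G$ is connected and non-complete we have $\beta\geq 1$, hence $\beta\geq 2$ (as $\beta\neq 1$), so the fraction $\frac{\alpha(\beta-\alpha)}{\beta-1}$ is well defined and nonnegative (because $\alpha\geq 0$ and $\beta-\alpha\geq 0$); therefore $k\geq 2/d>0$. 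This is also the place where the hypothesis $\beta\neq1$ is genuinely used, as it keeps the denominator $\beta-1$ from vanishing.

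Next I would invoke Theorem \ref{LLYBM} with this value of $k$. Since an amply regular graph is locally finite and we are assuming $G$ connected, that theorem applies and yields that $G$ is finite with
\[
\mathrm{diam}(G)\;\leq\;\frac{2}{k}\;=\;\frac{2d}{2+\left\lceil\frac{\alpha(\beta-\alpha)}{\beta-1}\right\rceil}\;=\;\frac{2d}{2+\left\lceil\frac{\beta-\alpha}{\beta-1}\alpha\right\rceil},
\]
the last equality being a trivial rewriting of the argument of the ceiling. Finally, since $\mathrm{diam}(G)$ is a nonnegative integer, the upper bound may be replaced by its floor, giving exactly the stated inequality
\[
\mathrm{diam}(G)\leq\left\lfloor\frac{2d}{2+\left\lceil\frac{\beta-\alpha}{\beta-1}\alpha\right\rceil}\right\rfloor.
\]

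\textbf{Main obstacle.} Candidly, there is no substantive obstacle: the statement is an immediate corollary of the two cited theorems. The only things to be careful about are the strict positivity of the curvature bound (so that the Bonnet--Myers hypothesis $k>0$ is met), which hinges on $\beta\geq2$, and the passage from the real-valued Bonnet--Myers bound $2/k$ to its integer floor, which is justified simply because diameter is integer-valued. If one wished, one could also note in passing that the same argument shows $G$ itself is finite under the hypothesis $1\neq\beta\geq\alpha$, recovering the relevant part of Terwilliger's finiteness result.
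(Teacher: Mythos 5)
Your proposal is correct and is exactly the paper's argument: the authors also obtain this corollary as a direct consequence of Theorem \ref{thm:beta_alpha_1} and the discrete Bonnet--Myers theorem (Theorem \ref{LLYBM}). Your added remarks on the positivity of $k$ and on taking the integer floor are fine and just make explicit what the paper leaves implicit.
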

\begin{proof}
It is a direct consequence of Theorem \ref{LLYBM} and Theorem \ref{thm:beta_alpha_1}.
\end{proof}

\begin{remark}
It has been proved that $\kappa_{LLY}(x,y)\geq \frac{2}{d}$ if $\beta=\alpha>1$ \cite[Theorem 1.3]{LL21}, and $\kappa_{LLY}(x,y)\geq \frac{3}{d}$ if $\beta>\alpha\geq 1$ \cite[Theorem 1.3]{HLX24}.
    Theorem \ref{thm:beta_alpha_1} recovers and improves those previous results. In particular, for given $\alpha$ and $d$, our lower bound tends to the maximal possible curvature value $\frac{2+\alpha}{d}$ when $\beta$ is large enough.
\end{remark}

\begin{corollary}\label{cor:perfect_matching}
    Let $G=(V,E)$ be an amply regular graph with parameters $(n,d,\alpha,\beta)$ such that $\beta>\alpha^2-\alpha+1$. Then, for any edge $xy\in E$, we have $\kappa_{LLY}(x,y)=\frac{2+\alpha}{d}$, which is equivalent to say that there exists a perfect matching between $N_x$ and $N_y$.
\end{corollary}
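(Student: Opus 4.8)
The plan is to squeeze $\kappa_{LLY}(x,y)$ between the universal upper bound of Theorem~\ref{thm:upperk} and the lower bound of Theorem~\ref{thm:beta_alpha_1}, and to check that the hypothesis $\beta>\alpha^{2}-\alpha+1$ makes the two bounds coincide. So the corollary will really be a bookkeeping consequence of results already in hand.

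First I would verify that the hypothesis already forces $1\neq\beta\geq\alpha$, so that Theorem~\ref{thm:beta_alpha_1} is applicable. Indeed $\alpha^{2}-\alpha+1-\alpha=(\alpha-1)^{2}\geq 0$ and $\alpha^{2}-\alpha+1\geq 1$ for every integer $\alpha\geq 0$, so $\beta>\alpha^{2}-\alpha+1$ yields both $\beta>\alpha$ and $\beta\geq 2$. Theorem~\ref{thm:beta_alpha_1} then gives
\[\kappa_{LLY}(x,y)\geq\frac{2+\left\lceil\frac{\alpha(\beta-\alpha)}{\beta-1}\right\rceil}{d},\]
while, since $|\Delta_{xy}|=\alpha$, Theorem~\ref{thm:upperk} gives $\kappa_{LLY}(x,y)\leq\frac{2+\alpha}{d}$.

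The only real computation is the claim that $\left\lceil\frac{\alpha(\beta-\alpha)}{\beta-1}\right\rceil=\alpha$ under the hypothesis. The inequality $\frac{\alpha(\beta-\alpha)}{\beta-1}\leq\alpha$ is immediate from $\beta-\alpha\leq\beta-1$ (and is trivial when $\alpha=0$). For the reverse I would establish the strict inequality $\frac{\alpha(\beta-\alpha)}{\beta-1}>\alpha-1$; multiplying through by $\beta-1>0$, this is equivalent to $\alpha(\beta-\alpha)>(\alpha-1)(\beta-1)$, which rearranges precisely to $\beta>\alpha^{2}-\alpha+1$. Since $\alpha-1$ is an integer, any real number strictly larger than it has ceiling at least $\alpha$, so the ceiling equals $\alpha$.

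Combining the two displayed bounds then gives $\frac{2+\alpha}{d}\leq\kappa_{LLY}(x,y)\leq\frac{2+\alpha}{d}$, hence $\kappa_{LLY}(x,y)=\frac{2+\alpha}{d}$. Finally, the equivalence of statements (a) and (b) in Theorem~\ref{thm:upperk} translates this equality into the existence of a perfect matching between $N_{x}$ and $N_{y}$, which is the asserted reformulation. I do not expect any genuine obstacle: the substance is carried by Theorems~\ref{thm:upperk} and~\ref{thm:beta_alpha_1}, and what remains is only the elementary rational-inequality manipulation above, together with the routine check that the parameter hypotheses of Theorem~\ref{thm:beta_alpha_1} are met.
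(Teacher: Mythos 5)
Your proposal is correct and follows essentially the same route as the paper: the lower bound from Theorem \ref{thm:beta_alpha_1} combined with the observation that $\beta>\alpha^2-\alpha+1$ is equivalent to $\frac{\alpha(\beta-\alpha)}{\beta-1}>\alpha-1$, squeezed against the upper bound and matching equivalence of Theorem \ref{thm:upperk}. Your additional checks (that the hypothesis forces $1\neq\beta\geq\alpha$, and that the ceiling is also at most $\alpha$) are harmless elaborations of what the paper leaves implicit.
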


\begin{proof}
Since $\beta>\alpha^2-\alpha+1$, we have $\frac{\beta-\alpha}{\beta-1}\alpha>\alpha-1$. Hence, we derive from Theorem \ref{thm:beta_alpha_1} that $\kappa_{LLY}(x,y)\ge\frac{2+\alpha}{d}$. By Theorem \ref{thm:upperk}, we deduce that $\kappa_{LLY}(x,y)=\frac{2+\alpha}{d}$, or equivalently, there exists a perfect matching between $N_x$ and $N_y$.
\end{proof}

\begin{remark}
    Several sufficient conditions involving restrictions on all parameters $n,d,\alpha,\beta$ for the existence of a perfect matching between $N_x$ and $N_y$ have been found in \cite[Theorem 1.4]{CLZ24}. In particular, a conjecture due to Bonini et. al. \cite{Bonini} on the existence of such local perfect matchings in conference graphs is confirmed. An advantage of Corollary \ref{cor:perfect_matching}, comparing to \cite[Theorem 1.4]{CLZ24}, is that we only need a restriction on $\alpha$ and $\beta$.
\end{remark}

\section{Bakry--\'Emery curvature formula of amply regular graphs}\label{section:BE}

In this section, we calculate the Bakry--\'Emery curvature at a vertex of an amply regular graph associated with either a balanced or an anti-balanced signature. For that purpose, we first recall the general curvature matrix method developed in \cite{CLP,CKLN,HL22,S20,S21}.

\subsection{Curvature matrix}

In \cite{HL22}, the second and third named authors establish the method for computing the Bakry--\'Emery curvature on connection graphs, i.e., graphs associated with an orthogonal or unitary matrix valued signature, building upon previous works \cite{CLP,CKLN,S20,S21} on unsigned graphs. We recall the calculation procedure here. For our later purpose, we restrict ourselves to the current setting of a $d$-regular graph associated with a signature $\sigma:E\to \{+1,-1\}$. For any two vertices $x,y$, let $a_{xy}=1$ if $xy\in E$ and $0$ otherwise.

Let $x\in V$ be a given vertex. For any integer $i$, define $S_i(x)$ to be the set of vertices at distance $i$ to $x$. Let us denote by 
\[S_1(x):=\{y_1,y_2,\ldots,y_d\},\,\text{and}\,\,S_2(x):=\{z_1,z_2,\ldots, z_{\ell}\}.\] 
For any $z_k\in S_2(x)$, we define its in-degree to be $d_{z_k}^{-}:=\sum_{i=1}^da_{y_iz_k}$.

In the case of $N=\infty$, the curvature dimension inequality (\ref{curvatureineq}) at a vertex $x$ becomes
\begin{equation*}
    \Gamma^{\sigma}_{2}(f)(x)-K\Gamma^{\sigma}(f)(x)\geq0,
\end{equation*}
for any function $f:V\rightarrow\mathbb{R}$. Observe that the value $\Gamma^{\sigma}_{2}(f)(x)$ depends only on the values of $f$ on $B_2(x)=\{x\}\cup S_1(x)\cup S_2(x)$. Let   $\Gamma^{\sigma}_{2}(x)$ be the symmetric matrix of size $(1+d+|S_{2}(x)|)\times (1+d+|S_{2}(x)|)$ corresponding to the quadratic bilinear form $\Gamma^{\sigma}_{2}(f)(x)$. 

%Now it can be considered as a linear programming problem as follows.
%\begin{align}
%    \text{maximum : }& K\notag\\
%    \text{such that : }& \Gamma^{\sigma}_{2}(x)-K\Gamma^{\sigma}(x)\succeq0.\label{eq:linearprogramming}
%\end{align}
%Next, a fundamental tool for positive semidefinite matrix, called \emph{Schur complement}, is used here to reformulate the symmetric matrix $\Gamma^{\sigma}_{2}(x)$ above.

%\begin{theorem}[\cite{Albert}, Albert]\label{thm:Schurcomplement}
%    Let $S:=\begin{pmatrix}
 %       S_{11} & S_{12} \\
 %       S_{21} & S_{22}
 %   \end{pmatrix}$ be an Hermitian matrix. Then $S$ is positive semidefinite (denoted by $S\succeq 0$) if and only if $S_{11}S^{\dagger}_{11}S_{12}=S_{12}$, $S_{11}\succeq 0$ and the Schur complement $S/S_{11}:=S_{22}-S_{21}S_{11}^{\dagger}S_{12}\succeq 0$. 
%\end{theorem}
Let us write $\Gamma^{\sigma}_{2}(x)$ blockwisely as \[\Gamma^{\sigma}_{2}(x)=\begin{pmatrix}
    \Gamma^{\sigma}_{2}(x)_{B_{1},B_{1}} & \Gamma^{\sigma}_{2}(x)_{B_{1},S_{2}}\\
    \Gamma^{\sigma}_{2}(x)_{S_{2},B_{1}} & \Gamma^{\sigma}_{2}(x)_{S_{2},S_{2}} 
\end{pmatrix},\]
where we use the notations that $B_1=B_1(x):=\{x\}\cup S_1(x)$ and $S_2=S_2(x)$.
A key observation is that the lower right block 
\[\Gamma^{\sigma}_{2}(x)_{S_{2},S_{2}}=\frac{1}{4}\mathrm{diag}(d_{z_{1}}^{-}, \ldots, d_{z_{\ell}}^{-}),\]
 is strictly positive definite. 
 Let $Q(x)$ be the Schur complement of $\Gamma^{\sigma}_{2}(x)_{S_{2},S_{2}}$ in the matrix $\Gamma^{\sigma}_{2}(x)$, that is, 
 \[Q(x):= \Gamma^{\sigma}_{2}(x)_{B_{1},B_{1}}-\Gamma^{\sigma}_{2}(x)_{B_{1},S_{2}}\left(\Gamma^{\sigma}_{2}(x)_{S_{2},S_{2}}\right)^{-1} \Gamma^{\sigma}_{2}(x)_{S_{2},B_1}. \]
% Taking Schur complement to $4\Gamma^{\sigma}_{2}(x)$ with respect to $(4\Gamma^{\sigma}_{2}(x))_{S_{2}(x),S_{2}(x)}$, where multiplying $4$ is to make each entry integer-valued, yields the following $(1+d)\times (1+d)$ symmetric matrix 
 %$$4Q(x):=(4\Gamma^{\sigma}_{2}(x))/(4\Gamma^{\sigma}_{2}(x))_{S_{2}(x),S_{2}(x)},$$ 
% and the inequality (\ref{eq:linearprogramming}) becomes
%\begin{align}\label{eq:4q}4Q(x)-4K\Gamma^{\sigma}%(x)\succeq 0.
% \end{align}
Below we recall the explicit formula of the $(d+1)\times (d+1)$ matrix $Q(x)$ from \cite[(A.10)-(A.13)]{HL22}: The rows and columns of $Q(x)$ are indexed by $x$ and $y_{1},\dots, y_{d}$. For convenience, we multiply $Q(x)$ by a factor $4$.
\begin{align}    
    &4Q(x)_{x,x}=3d+d^{2}
    -\sum_{k=1}^\ell\frac{1}{d_{z_k}^-}\left(\sum_{i=1}^d a_{y_{i}z_{k}}\sigma_{xy_{i}}\sigma_{y_{i}z_{k}}\right)^2,\label{eq:2.1}\\
    &4Q(x)_{x,y_i}=\sum_{j=1,j\neq i}^d a_{y_jy_i}\sigma_{xy_j}\sigma_{y_jy_i}-2\left(d+1\right)\sigma_{xy_i}\notag\\
    &\hspace{5.5cm}+2\sum_{k=1}^\ell\frac{1}{d_{z_{k}}^{-}}\left(\sum_{j=1}^d a_{y_{j}z_{k}}\sigma_{xy_{j}}\sigma_{y_{j}z_{k}}\right)a_{y_{i}z_{k}}\sigma_{y_{i}z_{k}},\,\forall\,i,\label{eq:2.2}\\
    &4Q(x)_{y_{i},y_{i}}=\sum_{j=1,j\neq i}^d a_{y_jy_{i}}+2\left(d+1\right)-4\sum_{k=1}^\ell\frac{a_{y_{i}z_{k}}}{d_{z_{k}}^{-}},\,\forall \,i,\label{eq:2.3}\\
    &4Q(x)_{y_{i},y_j}=-4a_{y_iy_{j}}\sigma_{y_{i}y_{j}}+2\sigma_{xy_{i}}\sigma_{xy_{j}}-4\sum_{k=1}^\ell\frac{1}{d_{z_{k}}^{-}}a_{y_{i}z_{k}}a_{y_jz_{k}}\sigma_{y_{i}z_{k}}\sigma_{y_jz_{k}},\,\forall\,i,\,\forall\,j\neq i.\label{eq:2.4}
\end{align}

We define another symmetric matrix $B$ of size $(d+1)\times (d+1)$ as below (see \cite[(3.33)]{HL22})
\begin{align}\label{eq:B0}
B:=
\begin{pmatrix}
   1  & 1 & \cdots & 1 \\
      & 1 &        &   \\
      &   & \ddots &   \\
      &   &        & 1
\end{pmatrix}
\end{align}
We then calculate the product $2BQ(x)B^\top$  and write it blockwisely as 
\[2BQ(x)B^\top=\begin{pmatrix}
    a & \omega^\top \\
    \omega & (2BQ(x)B^\top)_{\hat{1}}
\end{pmatrix},\]
where $(2BQ(x)B^\top)_{\hat{1}}$ stands for the $d\times d$ matrix obtained from $2BQ(x)B^\top$ by removing the first row and the first column.
By \cite[Proposition A.1]{HL22}, it always holds that $a\geq 0$. Taking the Schur complement of $a$ in the matrix $2BQ(x)B^\top$, we arrive at the so-called curvature matrix \cite[Definition 3.1]{HL22}.
%Since $B$ is inversible, multiplying the matrix $B$ and $B^\top$ to the matrices in the inequality (\ref{eq:4q}) yields
%\begin{align}
%    4BQ(x)B^{\top}-K
%    \begin{pmatrix}
%        0  &  \cdots & 0  \\
%        \vdots &  2\mathbf{I}_{d} & \\
%        0 &     &
%    \end{pmatrix}\succeq 0.
%\end{align}
%By Theorem \ref{thm:Schurcomplement}, take Schur complement with respect to $a$ for the second time, we obtain 
%\begin{align}
 %   (2BQ(x)B^\top)_{\hat{1}}-\omega a^{\dagger}\omega^{\top}-K\mathbf{I}_{d}\succeq 0.
%\end{align}
%Then the Bakry--\'Emery curvature at a vertex $x$ can be transformed into an eigenvalue problem and the corresponding matrix is defined as curvature matrix.
\begin{definition}[Curvature matrix]\label{def:curvature_matrix}
    Let $(G,\sigma)$ be a signed graph with $G=(V,E)$ being $d$-regular. The curvature matrix at a vertex $x\in V$ is defined as 
    \[A_{\infty}(G,\sigma,x):=(2BQ(x)B^\top)_{\hat{1}}-\omega a^{\dagger}\omega^{\top},\]
    where $a^{\dagger}=\frac{1}{a}$ if $a>0$ and $0$ if $a=0$.
\end{definition}
The Bakry--\'Emery curvature at a vertex of a signed graph is actually equal to the smallest eigenvalue of the curvature matrix.
\begin{theorem}[{\cite[Theorem 3.2]{HL22}}]\label{thm:curvature_matrix}
   Let $(G,\sigma)$ be a signed graph with $G=(V,E)$ being $d$-regular. Then we have
    \[K_{BE}(G,\sigma, x,\infty)=\lambda_{\min}(A_{\infty}(G,\sigma,x)),\]
    where $\lambda_{\min}(\cdot)$ stands for the smallest eigenvalue of the matrix.
\end{theorem}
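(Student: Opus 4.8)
The plan is to recast the curvature-dimension inequality $CD^\sigma(K,\infty)$ at $x$ as a semidefinite matrix inequality and then strip it down to the curvature matrix by two successive Schur complements, each justified by a positivity fact already isolated in the excerpt.

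First I would reformulate the problem in matrix terms. Since $\Gamma_2^\sigma(f)(x)$ depends only on $f|_{B_2(x)}$, it is the quadratic form $f^\top\Gamma_2^\sigma(x)f$; likewise $\Gamma^\sigma(f)(x)$ depends only on $f|_{B_1(x)}$ and is the quadratic form of a symmetric $(d+1)\times(d+1)$ matrix $\Gamma^\sigma(x)$, which I view inside $\mathbb{R}^{B_2(x)}$ by zero-padding the $S_2$-block. Then $CD^\sigma(K,\infty)$ holds at $x$ if and only if $\Gamma_2^\sigma(x)-K\,\Gamma^\sigma(x)\succeq 0$, so that $K_{BE}(G,\sigma,x,\infty)=\sup\{K:\Gamma_2^\sigma(x)-K\,\Gamma^\sigma(x)\succeq 0\}$.

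Next I would eliminate the $S_2$-coordinates. In $\Gamma_2^\sigma(x)-K\,\Gamma^\sigma(x)$ the lower-right $S_2\times S_2$ block is $\frac14\mathrm{diag}(d_{z_1}^-,\dots,d_{z_\ell}^-)$, which is strictly positive definite because every $z_k\in S_2(x)$ has a neighbour in $S_1(x)$, and the $K$-term meets neither the $S_2$-rows nor the $S_2$-columns. Hence by the Schur complement characterization of positive semidefiniteness, $\Gamma_2^\sigma(x)-K\,\Gamma^\sigma(x)\succeq 0$ iff its Schur complement with respect to this block is $\succeq 0$; by the definition of $Q(x)$ that Schur complement is exactly $Q(x)-K\,\Gamma^\sigma(x)$, so $K_{BE}(G,\sigma,x,\infty)=\sup\{K:Q(x)\succeq K\,\Gamma^\sigma(x)\}$. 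I would then normalize $\Gamma^\sigma(x)$: a direct computation gives $2\Gamma^\sigma(f)(x)=\sum_{y:\,xy\in E}(f(y)-\sigma_{xy}f(x))^2$, so $\Gamma^\sigma(x)$ is positive semidefinite of rank $d$ with one-dimensional kernel; using switching invariance (Proposition~\ref{prop:switching}) to assume $\sigma_{xy_i}=+1$ for all $i$, one checks that transforming by the invertible matrix $B$ of \eqref{eq:B0} gives $2B\,\Gamma^\sigma(x)\,B^\top=\mathrm{diag}(0,I_d)$. Since $B$ is invertible, $Q(x)\succeq K\,\Gamma^\sigma(x)$ is equivalent to $2BQ(x)B^\top\succeq K\,\mathrm{diag}(0,I_d)$, i.e. to $\begin{pmatrix}a&\omega^\top\\\omega&(2BQ(x)B^\top)_{\hat 1}-KI_d\end{pmatrix}\succeq 0$. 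Finally, using $a\geq 0$ from \cite[Proposition~A.1]{HL22} (together with the fact that $\omega=0$ whenever $a=0$, which disposes of the only degenerate case), a last Schur complement in the $(1,1)$-entry shows this is equivalent to $A_{\infty}(G,\sigma,x)-KI_d\succeq 0$, i.e. to $K\le\lambda_{\min}(A_{\infty}(G,\sigma,x))$; taking the supremum over $K$ finishes the proof.

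The point requiring the most care will be ensuring that every one of these reductions is an equivalence rather than a one-way implication — that is, keeping track of the two positivity facts (strict positive-definiteness of the $\frac14\mathrm{diag}(d_{z_k}^-)$ block and $a\ge 0$) and the invertibility of $B$. The only genuinely computational input is the identity $2B\,\Gamma^\sigma(x)\,B^\top=\mathrm{diag}(0,I_d)$, where the explicit shape of $B$ together with the switching reduction $\sigma_{xy_i}=+1$ is used; and the residual subtlety is the borderline case $a=0$, where one must know that $\omega$ vanishes so that the convention $a^\dagger=0$ returns the correct value.
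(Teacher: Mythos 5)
Your argument is correct and follows exactly the route the paper sets up in Section 4.1 (the paper itself imports this statement from \cite[Theorem 3.2]{HL22} without reproving it): reformulate $CD^\sigma(K,\infty)$ at $x$ as $\Gamma_2^\sigma(x)-K\Gamma^\sigma(x)\succeq 0$, eliminate the $S_2$-block by a Schur complement using the strict positivity of $\frac14\mathrm{diag}(d_{z_k}^-)$, conjugate by the invertible $B$ so that $2B\Gamma^\sigma(x)B^\top=\mathrm{diag}(0,I_d)$, and finish with a Schur complement in the $(1,1)$-entry. You correctly identify the only two delicate points: the identity $2B\Gamma^\sigma(x)B^\top=\mathrm{diag}(0,I_d)$ with the all-ones first row in \eqref{eq:B0} requires the switching normalization $\sigma_{xy_i}=+1$ (so one must either note that $A_\infty$ transforms under switching by conjugation with a diagonal $\pm1$ matrix, which preserves $\lambda_{\min}$, or put the signs $\sigma_{xy_i}$ into the first row of $B$ as in \cite{HL22}), and the degenerate case $a=0$ needs $\omega=0$, which is indeed part of what \cite[Proposition A.1]{HL22} supplies.
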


\subsection{Bakry--\'Emery curvature formula with balanced signatures}

In this subsection, we derive the following formula for the Bakry--\'Emery curvature of an amply regular graph $G=(V,E)$. Note that $G$ can be considered as a signed graph $(G,\sigma)$ with $\sigma$ being the all-$+1$ signature.

\begin{theorem}\label{thm:ARGCurvature}
    Let $G=(V,E)$ be an amply regular graph with parameters $(n,d,\alpha,\beta)$. Let $x\in V$ be a vertex, and $A_{S_1(x)}$ be the adjacency matrix of the local graph induced by $S_1(x)$. Then we have 
    \begin{equation}\label{eq:curvatureformula}
    K_{BE}(+,x)=2+\frac{\alpha}{2}+\left(\frac{2d(\beta-2)-\alpha^2}{2\beta}+\frac{2}{\beta}\min_{\lambda\in \mathrm{sp} \left(A_{S_1(x)}\right)}\left(\lambda-\frac{\alpha}{2}\right)^2\right)_-,
    \end{equation}
    where the minimum is taken over all eigenvalues of $A_{S_1(x)}$,
    %with an eigenvector vertical to $\mathbf{1}$
    and $a_-:=\min\{0,a\}$, for any $a\in \mathbb{R}$.
    \end{theorem}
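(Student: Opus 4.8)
The strategy is to compute the curvature matrix $A_\infty(G,+,x)$ explicitly using formulas \eqref{eq:2.1}--\eqref{eq:2.4} specialized to the all-$+1$ signature and to the combinatorial structure of an amply regular graph, and then to identify its smallest eigenvalue via Theorem \ref{thm:curvature_matrix}. First I would plug $\sigma\equiv +1$ into \eqref{eq:2.1}--\eqref{eq:2.4}. For an amply regular graph, each $z_k\in S_2(x)$ satisfies $d_{z_k}^-=\beta$ (since $x$ and $z_k$ are at distance $2$), and $\sum_{k} a_{y_iz_k}a_{y_jz_k}$ counts vertices in $S_2(x)$ adjacent to both $y_i$ and $y_j$. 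Writing $A:=A_{S_1(x)}$ for the local adjacency matrix, one has $\sum_{j\neq i}a_{y_jy_i}=\alpha$ for every $i$ (each $y_i\in S_1(x)$ is adjacent to $x$, so it has $\alpha$ common neighbours with $x$, all lying in $S_1(x)$), so $A\mathbbm{1}=\alpha\mathbbm{1}$; and $\sum_k a_{y_iz_k}/\beta = (d-1-\alpha)/\beta$ for every $i$, since $y_i$ has $d$ neighbours, one being $x$, $\alpha$ of them in $S_1(x)$, and the remaining $d-1-\alpha$ in $S_2(x)$, each contributing in-degree towards $\beta$. The key combinatorial identity I expect to need is that for $y_i\neq y_j$, $\sum_k a_{y_iz_k}a_{y_jz_k}$ equals $\beta-a_{y_iy_j}-1$ if $y_iy_j\notin E$ (then $d(y_i,y_j)=2$, their $\beta$ common neighbours are $x$ plus vertices in $S_1(x)\cup S_2(x)$) and equals $\alpha-1-(\text{common nbrs of }y_i,y_j\text{ in }S_1(x))$ adjusted similarly if $y_iy_j\in E$ — I would record the clean uniform statement that $(A^2)_{ij}+(\text{off-diagonal }S_2\text{ overlap})_{ij}$ combine into something expressible through $A$ alone.

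The second step is to assemble $Q(x)$, then $2BQ(x)B^\top$, then take the Schur complement of the top-left entry $a$. Because all the row sums and the relevant local quantities are constant across $i$ (the amply regular hypothesis is exactly what makes this work), I expect $2BQ(x)B^\top$ to have a highly structured form: a scalar block plus a multiple of the identity plus a multiple of $A$ plus a rank-one correction involving $\mathbbm{1}$. Concretely I anticipate the $\hat 1$-block to look like $cI + \frac{2}{\beta}(A - \tfrac{\alpha}{2}I)^2 + (\text{rank-one in }\mathbbm{1}) + (\text{scalar})I$ for an appropriate constant, with the $\omega a^\dagger \omega^\top$ term cancelling or absorbing the $\mathbbm{1}$-direction contribution (this is the standard phenomenon that the $\mathbbm{1}$-eigendirection of $A$, which corresponds to the function constant on $S_1(x)$, gets handled separately by the Schur complement). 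Since $A$ is symmetric it is orthogonally diagonalizable, and $A(A-\tfrac{\alpha}{2}I)^2$ shares eigenvectors with $A$; the curvature matrix then becomes simultaneously diagonalizable with $A$ except possibly on the $\mathbbm{1}$-direction. On each eigenvector of $A$ with eigenvalue $\lambda$, the curvature matrix acts as the scalar $2+\frac{\alpha}{2} + \frac{2}{\beta}(\lambda-\frac{\alpha}{2})^2 + \frac{2d(\beta-2)-\alpha^2}{2\beta}$ or similar, and the $\lambda=\alpha$ direction (forced by $A\mathbbm{1}=\alpha\mathbbm{1}$) must be checked to give exactly $2+\frac{\alpha}{2}$ after the rank-one/Schur correction. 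Taking the minimum over eigenvalues $\lambda\in\mathrm{sp}(A_{S_1(x)})$ and then observing that the minimum of a sum is bounded by, and the formula's use of $(\cdot)_-$ reflects, whether the $\lambda$-dependent term can push below the baseline $2+\frac{\alpha}{2}$ — i.e. the curvature is $2+\frac{\alpha}{2}$ when $\frac{2d(\beta-2)-\alpha^2}{2\beta}+\frac{2}{\beta}\min_\lambda(\lambda-\frac{\alpha}{2})^2 \ge 0$ and drops by that (negative) amount otherwise — yields \eqref{eq:curvatureformula}.

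The main obstacle, I expect, is the bookkeeping in the Schur-complement step: correctly tracking how the top-left entry $a$ and the border vector $\omega$ of $2BQ(x)B^\top$ interact with the $\mathbbm{1}$-eigenvector of $A$, and verifying that after subtracting $\omega a^\dagger \omega^\top$ the resulting matrix is genuinely a polynomial in $A$ (plus the right scalars) with no leftover cross terms. A subtlety is the case $a=0$, where $a^\dagger=0$ and the rank-one term vanishes; one should check whether this case arises for amply regular graphs and, if so, that the formula still holds — I would argue $a>0$ always here, or handle it by continuity/direct inspection. A second, milder technical point is ensuring the combinatorial counting identities for $\sum_k a_{y_iz_k}a_{y_jz_k}$ hold uniformly; here one may need the fact that $y_i$ and $y_j$ have no common neighbours outside $B_2(x)$ relevant to this sum, which follows from the definition of $S_2(x)$. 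Once the curvature matrix is pinned down as a function of $A_{S_1(x)}$ alone, Theorem \ref{thm:curvature_matrix} delivers the smallest-eigenvalue formula immediately, and the algebra of extracting the $(\cdot)_-$ form is routine.
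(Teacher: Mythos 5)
Your plan follows essentially the same route as the paper: specialize \eqref{eq:2.1}--\eqref{eq:2.4} to the all-$+1$ signature, use $d_{z_k}^-=\beta$ and the combinatorial identity expressing $\sum_k a_{y_iz_k}a_{z_ky_j}$ through $A_{S_1(x)}$ and $A_{S_1(x)}^2$ (the paper's clean uniform form is $\sum_k a_{y_iz_k}a_{z_ky_j}=\alpha (A_{S_1(x)})_{ij}+\beta(1-(A_{S_1(x)})_{ij})-1-(A^2_{S_1(x)})_{ij}$), obtain a curvature matrix that is a quadratic polynomial in $A_{S_1(x)}$ plus a multiple of $\mathbf{J}_d$, and read off the smallest eigenvalue after treating the $\mathbf{1}$-direction separately. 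Two small corrections to your anticipated details. First, your guess that $a>0$ is wrong: for the all-$+1$ signature one finds $a=0$ \emph{and} $\omega=0$ in $2BQ(x)B^\top$, so the Schur correction $\omega a^\dagger\omega^\top$ vanishes identically and the curvature matrix is just $\tfrac12 P(x)$; your hedge of handling this by direct inspection suffices, but it should be carried out. Second, and more substantively, your computation naturally yields the minimum over $\mathrm{sp}\bigl(A_{S_1(x)}\vert_{\mathbf{1}^\perp}\bigr)$, i.e.\ over $\{\lambda_2,\dots,\lambda_d\}$, whereas the stated formula minimizes over all of $\mathrm{sp}(A_{S_1(x)})$ including $\lambda_1=\alpha$; reconciling the two requires an extra argument that you do not supply. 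The paper does this by noting that if the local graph has a non-complete connected component then $\lambda_2\ge 0$, so $(\lambda_2-\tfrac{\alpha}{2})^2\le (\tfrac{\alpha}{2})^2=(\lambda_1-\tfrac{\alpha}{2})^2$ and adjoining $\lambda_1$ does not lower the minimum, while otherwise the local graph is a disjoint union of at least two copies of $K_{\alpha+1}$ (using $d\ge\alpha+2$) and $\alpha$ already appears in $\mathrm{sp}\bigl(A_{S_1(x)}\vert_{\mathbf{1}^\perp}\bigr)$. This step is short but genuinely needed to reach \eqref{eq:curvatureformula} as stated.
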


%\begin{remark}
 %   Let $\mathrm{sp} \left(A_{S_1(x)}\right):=\{\lambda_1,\cdots, \lambda_{d-1},\lambda_d\}$ be the multiset of adjacency eigenvalues counting multiplicity. Then the set $\mathrm{sp} \left(A_{S_1(x)}\vert_{\mathbf{1}^\perp}\right)=\mathrm{sp} \left(A_{S_1(x)}\right)\setminus \{\alpha\}$. Notice that, if the local graph induced by $S_1(x)$ is disconnected, then the set $\mathrm{sp} \left(A_{S_1(x)}\vert_{\mathbf{1}^\perp}\right)$ still contains the eigenvalue $\alpha$.
%\end{remark}

\begin{proof}
For $x\in V$, recall our notations $S_1(x)=\{y_1,\ldots, y_d\}$ and $S_2(x)=\{z_1,\ldots,z_\ell\}$.
Taking the signature $\sigma$ to be the all-$+1$ signature, we derive the matrix $4Q(x)$  from \eqref{eq:2.1}-\eqref{eq:2.4} as follows.
\begin{align*}
4Q(x)=
\begin{pmatrix}
    (4+\alpha) d & -4-\alpha & \cdots & -4-\alpha \\
    -4-\alpha &    &   &  \\
    \vdots   &    & P(x)  &   \\
    -4-\alpha &   &   &  
\end{pmatrix},
\end{align*}
where the $d\times d$ submatrix $P(x)$ is given by
\begin{align}
    (P(x))_{ii}=&2d+\alpha+2-\frac{4}{\beta}(d-\alpha-1), \,\,\,i=1,2,\ldots,d,\label{eq:Pii}\\ 
    (P(x))_{ij}=&2-4a_{y_iy_j}-\frac{4}{\beta}\sum_{k=1}^\ell a_{y_{i}z_k}a_{z_ky_{j}},\,\,\,i\neq j.\label{eq:Pij}
\end{align}
In the above, we have applied the fact that $d_{z_k}^{-}=\beta$ and \[\sum_{k=1}^\ell d_{z_k}^-=\sum_{i=1}^d(d-\alpha-1)=d(d-\alpha-1).\]
Recalling the definition of matrix $B$ in (\ref{eq:B0}), we calculate the product 
\begin{align}\notag
    4BQ(x)B^{\top}=
    \begin{pmatrix}
        0 & 0 & \cdots & 0 \\
        0 &   &        &   \\
        \vdots &   &  P(x) \\
        0 &   &        &
    \end{pmatrix}.
\end{align} 
By Definition \ref{def:curvature_matrix}, the matrix $\frac{1}{2}P(x)$ is the curvature matrix at $x$. Hence, due to Theorem \ref{thm:curvature_matrix}, we have 
\begin{equation}\label{eq:curvature_matrix}
    K_{BE}(+,x)=\frac{1}{2}\lambda_{\min}(P(x)).
\end{equation}
We observe that for $i\neq j$
\[\sum_{w\in V}a_{y_iw}a_{wy_j}=1+\left(A^2_{S_1(x)}\right)_{ij}+\sum_{k=1}^\ell a_{y_iz_k}a_{z_ky_j}=\left\{
  \begin{array}{ll}
    \alpha, & \hbox{if $a_{y_iy_j}=1$;} \\
    \beta, & \hbox{if $a_{y_iy_j}=0$.}
  \end{array}
\right.\]
That is, we have
\begin{equation}\label{eq:key_AS1}
    \sum_{k=1}^\ell a_{y_iz_k}a_{z_ky_j}=\alpha\left(A_{S_1(x)}\right)_{ij}+\beta\left(1-\left(A_{S_1(x)}\right)_{ij}\right)-1-\left(A^2_{S_1(x)}\right)_{ij}.
\end{equation}
Inserting \eqref{eq:key_AS1} into \eqref{eq:Pij} yields for $i\neq j$ that
\begin{equation}\label{eq:Pij_rearrangement}
    (P(x))_{ij}=-\frac{2(\beta-2)}{\beta}-\frac{4\alpha}{\beta}\left(A_{S_1(x)}\right)_{ij}+\frac{4}{\beta}\left(A^2_{S_1(x)}\right)_{ij}.
\end{equation}
Let us rearrange \eqref{eq:Pii} accordingly as 
\begin{equation}\label{eq:Pii_rearrangement}
      (P(x))_{ii}=\frac{2(\beta-2)(d-1)}{\beta}+\frac{4\alpha}{\beta}+(4+\alpha).
\end{equation}
Combining \eqref{eq:Pij_rearrangement} and \eqref{eq:Pii_rearrangement}, we obtain the curvature matrix
\begin{align*}
\frac{1}{2}P&=\left(\frac{\beta-2}{\beta}(d-1)+\frac{2\alpha}{\beta}+2+\frac{\alpha}{2}\right) \mathbf{I}_d+\frac{\beta-2}{\beta}(\mathbf{I}_d-\mathbf{J}_d)-\frac{2\alpha}{\beta}A_{S_1(x)}+\frac{2}{\beta}\left(A_{S_1(x)}^2-\alpha \mathbf{I}_d\right)\\
&=\left(\frac{d(\beta-2)}{\beta}+2+\frac{\alpha}{2}\right) \mathbf{I}_d-\frac{\beta-2}{\beta}\mathbf{J}_d-\frac{2\alpha}{\beta}A_{S_1(x)}+\frac{2}{\beta}A_{S_1(x)}^2,
\end{align*}
where $\mathbf{I}_d$ and $\mathbf{J}_d$ are the $d\times d$ identity matrix and  all-$1$ matrix, respectively, and we used the fact $\left(A_{S_1(x)}^2\right)_{ii}=\alpha$. 

Since the local graph induced by $S_{1}(x)$ is $\alpha$-regular, the constant vector $\mathbf{1}$ is an eigenvector of its adjacency matrix $A_{S_1(x)}$ with eigenvalue $\alpha$. It is then straightforward to check that 
\[\frac{1}{2}P(x)\mathbf{1}=\left(2+\frac{\alpha}{2}\right)\mathbf{1}.\]
The eigenvalues of $\frac{1}{2}P(x)$ corresponding eigenvectors vertical to the constant vector $\mathbf{1}$ are given by
\begin{align}\notag
2+\frac{\alpha}{2}+\frac{d(\beta-2)}{\beta}-\frac{2\alpha}{\beta}\lambda+\frac{2}{\beta}\lambda^2=2+\frac{\alpha}{2}+\frac{2d(\beta-2)-\alpha^2}{2\beta}+\frac{2}{\beta}\left(\lambda-\frac{\alpha}{2}\right)^2,
\end{align}
where $\lambda\in \mathrm{sp}(A_{S_1(x)}\vert_{\mathbf{1}^\perp})$ is any eigenvalue of $A_{S_1(x)}$ corresponding to an eigenvector vertical to $\mathbf{1}$.
By \eqref{eq:curvature_matrix}, we arrive at \begin{equation*}
    K_{BE}(+,x)=2+\frac{\alpha}{2}+\left(\frac{2d(\beta-2)-\alpha^2}{2\beta}+\frac{2}{\beta}\min_{\lambda\in \mathrm{sp}\left({A_{S_1(x)}}\vert_{\mathbf{1}^\perp}\right)}\left(\lambda-\frac{\alpha}{2}\right)^2\right)_-.
    \end{equation*} 
Let $\lambda_1=\alpha \geq \lambda_2 \geq \cdots \geq \lambda_{d}$ be the eigenvalues of $A_{S_1(x)}$. Then we have \[\mathrm{sp}\left({A_{S_1(x)}}\vert_{\mathbf{1}^\perp}\right)=\{\lambda_2,\ldots, \lambda_d\}.\] 
Recall that the second largest eigenvalue is non-negative for any non-complete connected graph, see, e.g., \cite[Lemma 1.7 (iii)]{Chung}. Therefore, whenever the local graph induced by $S_1(x)$ has a non-complete connected component, we have $\lambda_2\ge 0$, and hence
\begin{equation}\label{simple}
\min_{\lambda\in \mathrm{sp}\left({A_{S_1(x)}}\vert_{\mathbf{1}^\perp}\right)}\left(\lambda-\frac{\alpha}{2}\right)^2=\min_{\lambda\in \mathrm{sp}\left({A_{S_1(x)}}\right)}\left(\lambda-\frac{\alpha}{2}\right)^2.
\end{equation}
If the local graph induced by $S_1(x)$ has no non-complete connected components, it is a disjoint union of at least two complete graphs $K_{\alpha+1}$ of size $\alpha+1$, since $d\ge \alpha +2$ (see Definition \ref{ARG} and comments afterwards). Then, $\lambda_2= \alpha=\lambda_1$, and \eqref{simple} still holds.

    In conclusion, we simplify the formula for $K_{BE}(+,x)$ as follows:
\begin{equation*}
    K_{BE}(+,x)=2+\frac{\alpha}{2}+\left(\frac{2d(\beta-2)-\alpha^2}{2\beta}+\frac{2}{\beta}\min_{\lambda\in \mathrm{sp}\left({A_{S_1(x)}}\right)}\left(\lambda-\frac{\alpha}{2}\right)^2\right)_-.
    \end{equation*}
    This completes the proof.
\end{proof}

%\begin{align*}
    %K_{BE}(+,x)=&2+\frac{\alpha}{2}+\left(d+\frac{2}{\beta}\min_{\lambda\in \mathrm{sp}\left({A_{S_1(x)}}\right)}\left(\lambda^2-\alpha\lambda-d\right)\right)_-.\\
    %K_{BE}(-,x)=&2+\left(d+\frac{\alpha}{2}+\frac{2}{\beta}\min_{\lambda\in \mathrm{sp}\left({A_{S_1(x)}}\right)}\left(\lambda^2+(2\beta-\alpha)\lambda-d\right)\right)_-.
%\end{align*}

Consequently, we obtain the Bakry--\'Emery curvature of an amply regular graph with parameters $(n,d,\alpha,\beta)$, under various restrictions on the relation between $\alpha$ and $\beta$.

%\begin{corollary}\label{BEcorollary}
   % Let $G=(V,E)$ be an amply regular graph with parameters $(n,d,\alpha,\beta)$.
%\begin{itemize}
 % \item [(i)]If $2d(\beta-2)\geq \alpha^2$, then  $K_{BE}(+,x)=2+\frac{\alpha}{2}$.
  
 % \item [(ii)]If $\beta\geq\alpha\geq3$, then  $K_{BE}(+,x)=2+\frac{\alpha}{2}$.
  
 % \item [(iii)]If $2d(\beta-2)\geq \alpha(\alpha-\beta)$, then  $K_{BE}(+,x)\geq2$.
%\end{itemize}
%\end{corollary}

%\begin{proof}
%    For the conclusion
%(i), if $2d(\beta-2)\geq \alpha^2$, then $$\left(\frac{2d(\beta-2)-\alpha^2}{2\beta}+\frac{2}{\beta}\min_{\lambda\in \mathrm{sp} \left(A_{S_1(x)}\vert_{\mathbf{1}^\perp}\right)}\left(\lambda-\frac{\alpha}{2}\right)^2\right)_-=0.$$ This implies $K_{BE}(+,x)=2+\frac{\alpha}{2}$.
%    It follows from $d\geq\alpha+2$ that if $\beta\geq\alpha\geq3$, then $$2d(\beta-2)-\alpha^2\geq2(\alpha+2)(\beta-2)-\alpha^2\geq2(\alpha+2)(\alpha-2)-\alpha^2\geq1.$$ Therefore, the  conclusion (ii) holds. The  conclusion (iii) follows from the fact that if $2d(\beta-2)\geq \alpha(\alpha-\beta)$, then $$\frac{\alpha}{2}+\frac{2d(\beta-2)-\alpha^2}{2\beta}\geq0,$$ which implies $K_{BE}(+,x)\geq2$.
%\end{proof}

\begin{corollary}\label{cor:BEcurvature}
    Let $G=(V,E)$ be an amply regular graph with parameters $(n,d,\alpha,\beta)$. Let $x$ be a vertex.
    \begin{itemize}
        \item [(i)] If $\beta=1$, then $K_{BE}(+,x)=2+\frac{\alpha}{2}-d$.
        \item [(ii)] If  $1\neq \beta\geq \alpha$ and $(\alpha,\beta)\neq (2,2)$, then $K_{BE}(+,x)=2+\frac{\alpha}{2}$.
        \item [(iii)] If $\alpha=\beta=2$, then $2\leq K_{BE}(+,x)\leq 3$. Moreover, the upper bound is achieved if and only if the local graph induced by $S_1(x)$ is a disjoint union of at least two $3$-cycles, and the lower bound is achieved if and only if the local graph contains an $m$-cycle with $m$ being a multiple of $6$.
    \end{itemize}
\end{corollary}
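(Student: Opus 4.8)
The plan is to invoke the explicit Bakry--\'Emery curvature formula \eqref{eq:curvatureformula} of Theorem \ref{thm:ARGCurvature} and examine the sign and size of the correction term $\left(\frac{2d(\beta-2)-\alpha^2}{2\beta}+\frac{2}{\beta}\min_{\lambda}\left(\lambda-\frac{\alpha}{2}\right)^2\right)_-$ case by case. For part (i), plugging $\beta=1$ into \eqref{eq:curvatureformula} is delicate because of the $\frac{1}{\beta}$ factors, so instead I would go back to \eqref{eq:curvature_matrix} and the matrix $P(x)$ in \eqref{eq:Pii}--\eqref{eq:Pij}: when $\beta=1$ any two non-adjacent neighbors of $x$ have exactly one common neighbor, and one computes $\tfrac12 P(x)$ directly; the key point is that then $S_1(x)$ induces a disjoint union of cliques $K_{\alpha+1}$ (any two non-adjacent vertices in $S_1(x)$ would force $d(y_i,y_j)=2$ with $\beta=1$ common neighbor, but $x$ is already such a common neighbor, hence there are no non-adjacent pairs inside a component), so $A_{S_1(x)}$ has only the eigenvalues $\alpha$ and $-1$, and the curvature reads off as $2+\frac{\alpha}{2}-d$ after substituting $\lambda=-1$ into the rearranged formula. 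For part (ii), I observe that $1\neq\beta\geq\alpha$ forces the bracketed quantity to be non-negative: taking $\lambda=\alpha$ (which is admissible by the discussion around \eqref{simple}, i.e.\ $\min_\lambda(\lambda-\frac{\alpha}{2})^2 \ge \frac{\alpha^2}{4}$ when... actually one needs the reverse) — more carefully, I would bound $\min_\lambda(\lambda-\frac{\alpha}{2})^2$ from below and check $\frac{2d(\beta-2)-\alpha^2}{2\beta}+\frac{2}{\beta}\min_\lambda(\lambda-\frac{\alpha}{2})^2\geq 0$ using $\beta\geq\alpha$, $\beta\geq 2$ (for $(\alpha,\beta)\neq(2,2)$ one has $\beta\geq 3$ or $\beta=2>\alpha$) together with the degree estimate $d\geq\alpha+2$, so that the $(\cdot)_-$ term vanishes and $K_{BE}(+,x)=2+\frac{\alpha}{2}$.

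For part (iii), set $\alpha=\beta=2$. Then \eqref{eq:curvatureformula} collapses to $K_{BE}(+,x)=3+\left(\frac{-4}{4}+\min_\lambda(\lambda-1)^2\right)_- = 3+\left(\min_{\lambda\in\mathrm{sp}(A_{S_1(x)})}(\lambda-1)^2-1\right)_-$. The local graph induced by $S_1(x)$ is $2$-regular, hence a disjoint union of cycles; its eigenvalues lie in $[-2,2]$, so $(\lambda-1)^2\in[0,9]$ and in particular $\min_\lambda(\lambda-1)^2\in[0,1]$ — wait, one must check the minimum is at most $1$: since $\lambda=2$ is always an eigenvalue (each cycle contributes it), $(\lambda-1)^2$ can be as large as $1$ at $\lambda=2$ but the \emph{minimum} over the spectrum is what matters, and it is $\le 1$ because... here the point is that for a single $m$-cycle the eigenvalues are $2\cos(2\pi k/m)$, and one checks whether some eigenvalue makes $(\lambda-1)^2$ small. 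So the minimum of $(\lambda-1)^2$ over the spectrum is always $\le 1$ (achieved e.g.\ if $\lambda=0$ or $\lambda=2$ occurs and nothing closer to $1$ does), giving the correction term in $[-1,0]$ and thus $2\le K_{BE}(+,x)\le 3$.

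The characterization of the two extremes is the substantive part. The upper bound $K_{BE}(+,x)=3$ holds iff the correction term is $0$, i.e.\ iff $\min_{\lambda\in\mathrm{sp}(A_{S_1(x)})}(\lambda-1)^2\geq 1$, i.e.\ iff \emph{every} eigenvalue $\lambda$ of the $2$-regular graph on $S_1(x)$ satisfies $\lambda\le 0$ or $\lambda\ge 2$; since eigenvalues of a disjoint union of cycles lie in $(-2,2]$ and the only way to have all of them in $(-2,0]\cup\{2\}$ is for every cycle to be a triangle (as $C_3$ has spectrum $\{2,-1,-1\}\subset(-2,0]\cup\{2\}$, whereas $C_m$ for $m\ge 4$ has the eigenvalue $2\cos(2\pi/m)\in(0,2)$, and the condition $d\ge\alpha+2=4$ rules out a single triangle, forcing at least two). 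For the lower bound $K_{BE}(+,x)=2$, I need $\min_\lambda(\lambda-1)^2=0$, i.e.\ $1\in\mathrm{sp}(A_{S_1(x)})$; among cycles, $C_m$ has $1=2\cos(\pi/3)=2\cos(2\pi/6)$ as an eigenvalue exactly when $6\mid m$ (so that $2\pi k/m=\pm\pi/3$ for some integer $k$), which is precisely the stated condition. The main obstacle is this last spectral bookkeeping: pinning down exactly which $2$-regular graphs have $1$ in their adjacency spectrum and which have all eigenvalues outside $(0,2)$, and handling the degenerate "single triangle" configuration via the degree constraint $d\ge 4$.
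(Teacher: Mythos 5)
Your overall strategy---feeding the spectrum of the local graph into formula \eqref{eq:curvatureformula} of Theorem \ref{thm:ARGCurvature} and doing casework on $(\alpha,\beta)$---is exactly the paper's, but two steps contain concrete errors. In part (i), the value $2+\frac{\alpha}{2}-d$ does \emph{not} read off "after substituting $\lambda=-1$": with $\mathrm{sp}(A_{S_1(x)})=\{\alpha,-1\}$, the minimizer of $(\lambda-\frac{\alpha}{2})^2$ is $\lambda=\alpha$ (at distance $\frac{\alpha}{2}$ from the vertex of the parabola, versus $1+\frac{\alpha}{2}$ for $\lambda=-1$), and it is this choice that yields $\frac{-2d-\alpha^2}{2}+2\cdot\frac{\alpha^2}{4}=-d$ inside $(\cdot)_-$ and hence $K_{BE}(+,x)=2+\frac{\alpha}{2}-d$. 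Substituting $\lambda=-1$ gives $2+2\alpha-d$ inside $(\cdot)_-$, which is not even always negative and never reproduces the stated answer. The fix is immediate, but the computation as written is wrong.

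The more substantive gap is in part (iii). Your argument that $K_{BE}(+,x)=3$ forces every component of the local graph to be a triangle rests on the claim that $C_m$ for $m\ge 4$ has the eigenvalue $2\cos(2\pi/m)\in(0,2)$. This fails for $m=4$: $2\cos(\pi/2)=0$, and in fact $\mathrm{sp}(C_4)=\{2,0,0,-2\}$ satisfies $(\lambda-1)^2\ge 1$ for every eigenvalue, so a $C_4$ component would also produce curvature $3$. The spectral bookkeeping alone therefore cannot exclude $4$-cycles; you need the combinatorial observation the paper makes, namely that when $\beta=2$ the local graph cannot contain a $C_4$, because two antipodal vertices of such a $4$-cycle lie at distance $2$ in $G$ and have at least three common neighbours ($x$ together with the other two cycle vertices). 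Without this, your characterization of the upper-bound case is incomplete. Separately, part (ii) is only sketched, and the crude bound $\min_\lambda(\lambda-\frac{\alpha}{2})^2\ge 0$ does not suffice in the boundary case $\alpha=1$, $\beta=2$, where $2d(\beta-2)-\alpha^2=-1<0$ and one genuinely needs $\min_\lambda(\lambda-\frac12)^2=\frac14$ from the spectrum $\{1,-1\}$ of a disjoint union of $K_2$'s; the remaining casework (the $6\mid m$ criterion, and the use of $d\ge\alpha+2$ to force at least two triangles) is correct and matches the paper.
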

\begin{remark}
A bit surprisingly, the Bakry--\'Emery curvature is uniquely determined by its parameters when $\beta\geq \alpha$, except for only one case $\alpha=\beta=2$. In the case of $\alpha=\beta=2$, the bound 2 and 3 is sharp for the Shrikhande graph and the $4\times 4$ Rooks' graph, respectively.
\end{remark}
\begin{proof}
Suppose $\beta=1$. Then the local graph induced by $S_1(x)$ is a disjoint union of complete graphs $K_{\alpha+1}$ of size $\alpha+1$ with at least two connected components. Therefore, the set of adjacency eigenvalues $\mathrm{sp}(A_{S_1(x)})$ consists of $\alpha$ and $-1$. Hence, we have 
\[\min_{\lambda\in \mathrm{sp}\left({A_{S_1(x)}}\right)}\left(\lambda-\frac{\alpha}{2}\right)^2=\frac{\alpha^2}{4}.\]
Therefore, we derive from \eqref{eq:curvatureformula} that $K_{BE}(+,x)=2+\frac{\alpha}{2}-d$ if $\beta=1$. This proves $(i)$.

Next, we show $(ii)$.
Notice that $d\geq \alpha+1$. Indeed, if $d=\alpha+1$, the graph has to be complete. Therefore, we have $d\geq \alpha+2$. If $\beta\geq \alpha\geq 3$, then we estimate
\[2d(\beta-2)-\alpha^2\geq 2(\alpha+2)(\alpha-2)-\alpha^2=\alpha^2-8\geq 0. \]
Therefore, we derive from the formula (\ref{eq:curvatureformula}) that
$K_{BE}(+,x)=2+\frac{\alpha}{2}$ if $\beta\geq \alpha\geq 3$.

If $\alpha=0$ and $\beta\geq 2$, we have $2d(\beta-2)-\alpha^2=2d(\beta-2)\geq 0$ and hence $K_{BE}(+,x)=2$. 

If $\alpha=1$ and $\beta\geq 2$, then the local graph induced by $S_1(x)$ is a $1$-regular graph, and hence must be a disjoint union of complete graphs $K_2$ of size $2$. If the local graph has only one connected component, then the graph $G$ is the complete graph $K_3$. Therefore, the local graph has at least two connected components. This leads to 
\[\min_{\lambda\in \mathrm{sp}\left({A_{S_1(x)}}\right)}\left(\lambda-\frac{\alpha}{2}\right)^2=\frac{1}{4}.\]
By \eqref{eq:curvatureformula}, we obtain $K_{BE}(+,x)=2+\frac{\alpha}{2}$ if $\alpha=1$ and $\beta\geq 2$.

If $\alpha=2$ and $\beta\geq 3$, we have $2d(\beta-2)-\alpha^2\geq 2d-\alpha^2>0$, and hence $K_{BE}(+,x)=2+\frac{\alpha}{2}$.
This completes the proof of $(ii)$.

Finally, let us consider the case $\alpha=\beta=2$. Then the formula \eqref{eq:curvatureformula} tells that 
\[K_{BE}(+,x)=3+\left(-1+\min_{\lambda\in \mathrm{sp} \left({A_{S_1(x)}}\right)}\left(\lambda-1\right)^{2}\right)_{-}.\] 
It follows that $2\leq K_{BE}(+,x)\leq 3$, where $K_{BE}(+,x)=3$ holds if and only if
\begin{equation}\label{eq:curvature_upper_sharp}
\min_{\lambda\in \mathrm{sp} \left({A_{S_1(x)}}\right)}\left(\lambda-1\right)^{2}\geq 1,
\end{equation}
and $K_{BE}(+,x)=2$ holds if and only if 
\begin{equation}\label{eq:curvature_lower_sharp}
\min_{\lambda\in \mathrm{sp} \left({A_{S_1(x)}}\right)}\left(\lambda-1\right)^{2}=0.
\end{equation}
Notice that the local graph induced by $S_1(x)$ is a $2$-regular graph, and hence a disjoint union of cycles. We first observe that the local graph can not contain a $4$-cycle, since otherwise there exist two vertices at distance $2$ from this $4$-cycle having at least $3$ common neighbors, contradicting the assumption that $\beta=2$.  

Recall that the eigenvalues of a cycle $C_n$ of $n$ vertices are $2\cos\frac{2j\pi}{n}, j=0,1,\ldots,\left\lfloor\frac{n}{2}\right\rfloor$. We have $K_{BE}(+,x)=3$, or equivalently \eqref{eq:curvature_upper_sharp} holds, if and only if the local graph is a disjoint union of at least two $3$-cycles.

We have  $K_{BE}(+,x)=2$, or equivalently \eqref{eq:curvature_lower_sharp} holds, if and only if the local graph has an eigenvalue $1$, which can only happen when the local graph contains a $m$-cycle such that $m$ is a positive multiple of $6$.
\end{proof}

%\begin{corollary}\label{cor:tezhengzhi}
%Let $G=(V,E)$ be an amply regular graph with parameters $(n,d,\alpha,\beta)$. Let $\theta_1\leq \theta_2\leq \cdots\leq \theta_{n-1}\leq \theta_n=d$ be the eigenvalues of its adjacency matrix.
%\begin{itemize}
 % \item [(i)] If $\beta\geq \alpha \geq3$, then \[\theta_{n-1}\leq d-2-\frac{\alpha}{2}.\]
  %\item [(ii)]If $\alpha>\beta \geq 3$, and $d\geq \frac{\alpha^2}{2(\beta-2)}$, then \[\theta_{n-1}\leq d-2-\frac{\alpha}{2}.\] 
  %\item [(iii)]If $\alpha>\beta \geq 3$, and $d\geq \frac{\alpha^2-\alpha\beta}{2(\beta-2)}$, then  \[\theta_{n-1}\leq d-2.\]
%\end{itemize}
%\end{corollary}

%\begin{proof}
 %   By  Lichnerowich type eigenvalue estimate.
%\end{proof}

\begin{corollary}\label{cor:BE_d}
Let $G=(V,E)$ be an amply regular graph with parameters $(n,d,\alpha,\beta)$. Then, the diameter of $G$ satisfies
\begin{itemize}
    \item [(i)]If $2d(\beta-2)\geq \alpha^2$, then \begin{align}\label{eq:diameterestimate}
    \mathrm{diam}(G)\leq \left\lfloor\frac{2d}{2+\frac{\alpha}{2}}\right\rfloor.
    \end{align}
  \item [(ii)]If $2d(\beta-2)\geq \alpha^2-\alpha\beta$, then  \begin{align}\label{eq:diameterestimate2}\mathrm{diam}(G)\leq d.\end{align}
\end{itemize}
\end{corollary}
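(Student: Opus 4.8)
The plan is to obtain both diameter bounds directly from the Bakry--\'Emery curvature formula of Theorem \ref{thm:ARGCurvature} together with the discrete Bonnet--Myers theorem, Theorem \ref{BEBM}. That formula writes $K_{BE}(+,x)$ as $2+\frac{\alpha}{2}$ plus a non-positive correction term $\left(\frac{2d(\beta-2)-\alpha^2}{2\beta}+\frac{2}{\beta}\min_{\lambda\in\mathrm{sp}(A_{S_1(x)})}(\lambda-\frac{\alpha}{2})^2\right)_-$, in which the $\min$ is a square and hence always $\geq 0$. So in both parts I would first produce, under the stated hypothesis, a uniform lower bound on $K_{BE}(+,x)$ over all $x\in V$, and then feed it into Theorem \ref{BEBM} with $d_{\max}=d$.

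For part (i): under $2d(\beta-2)\geq\alpha^2$ the summand $\frac{2d(\beta-2)-\alpha^2}{2\beta}$ is $\geq 0$, and the square summand is $\geq 0$, so the argument of $(\cdot)_-$ is non-negative, the correction vanishes, and $K_{BE}(+,x)=2+\frac{\alpha}{2}$ for every $x$. Since $2+\frac{\alpha}{2}>0$, Theorem \ref{BEBM} gives $\mathrm{diam}(G)\leq\frac{2d}{2+\alpha/2}$; as the diameter is an integer I then take the floor to obtain \eqref{eq:diameterestimate}.

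For part (ii) the target is weaker: I only need $K_{BE}(+,x)\geq 2$, which yields $\mathrm{diam}(G)\leq\frac{2d}{2}=d$. Here I would split on the sign of the bracket in the formula. If it is non-negative the correction vanishes and $K_{BE}(+,x)=2+\frac{\alpha}{2}\geq 2$. If it is negative I discard the non-negative square term to get $K_{BE}(+,x)\geq 2+\frac{\alpha}{2}+\frac{2d(\beta-2)-\alpha^2}{2\beta}=2+\frac{\alpha\beta+2d(\beta-2)-\alpha^2}{2\beta}$, which is $\geq 2$ precisely when $2d(\beta-2)\geq\alpha^2-\alpha\beta$ --- exactly the hypothesis. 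Then Theorem \ref{BEBM} applies. (No division-by-zero arises: a connected non-complete graph has diameter $\geq 2$, so two vertices at distance $2$ exist and $\beta\geq 1$; and when $\beta=1$ both hypotheses are vacuous, their left-hand sides being negative.)

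I do not expect a real obstacle once Theorem \ref{thm:ARGCurvature} is in hand; both statements are short deductions. The only points needing care are remembering that $\mathrm{diam}(G)$ is an integer (so the floor in \eqref{eq:diameterestimate} is legitimate) and, in part (ii), noticing that discarding the eigenvalue-dependent square term is exactly what relaxes the sharper condition $2d(\beta-2)\geq\alpha^2$ of (i) to the weaker $2d(\beta-2)\geq\alpha^2-\alpha\beta$ of (ii); this also explains why (ii) is a genuinely separate statement rather than a corollary of (i).
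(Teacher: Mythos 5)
Your proposal is correct and follows exactly the route the paper intends: the paper's proof is the one-line remark that the corollary is a direct consequence of Theorem \ref{thm:ARGCurvature} and Theorem \ref{BEBM}, and your case analysis (bracket nonnegative versus negative, discarding the nonnegative square term in part (ii)) fills in precisely the computation that makes this ``direct consequence'' work. The side remarks about $\beta\geq 1$ and the integrality of the diameter are accurate and harmless.
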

\begin{proof}It's a direct consequence of 
 Theorem \ref{thm:ARGCurvature} and Theorem \ref{BEBM}.
\end{proof}
Now we are prepared to show Theorem \ref{thm:main1}, Corollary \ref{cor:QPK_weak} and Theorem \ref{thm:finite_ARG} from the Introduction.
\begin{proof}[Proof of Theorem \ref{thm:main1}]
By Corollary \ref{cor:BEcurvature} and Theorem \ref{BEBM}, we have $$\mathrm{diam}(G)\leq \left\lfloor\frac{2d}{2+\frac{\alpha}{2}}\right\rfloor.$$
Combining the above estimate with Theorem \ref{LLYd}, we obtain the desired conclusion.
\end{proof}

\begin{proof}[Proof of Corollary \ref{cor:QPK_weak}]
This is a direct consequence of Theorem \ref{thm:main1}. We mention that in the case $2=\beta=\alpha>\varepsilon d\geq 1$, we have by Corollary \ref{cor:BE_d} that $\mathrm{diam}(G)\leq d<2/\varepsilon<4/\varepsilon$.
\end{proof}

\begin{proof}[Proof of Theorem \ref{thm:finite_ARG}]
There are finitely many amply regular graphs with diameter $2$ and fixed degree. We suppose that $G$ has diameter at least three. If $d<3(\alpha-\beta+2)$, it follows by Theorem \ref{thm:1.2.3} that $G$ is the icosahedron or the line graph of a regular graph of girth at least $5$. If $G$ is the line graph of a regular graph of girth at least $5$, then $\beta =1$, which is contradictory to the assumption that $\alpha\le 6\beta-9$. 

Now, we suppose that $d\ge3(\alpha-\beta+2)$. If $\alpha\ge\beta$, since $6\beta-9-\alpha\ge0$, we have\begin{align}\notag
    &\frac{2d(\beta-2)-\alpha^2}{2\beta}\geq \frac{6(\alpha-\beta+2)(\beta-2)-\alpha^2}{2\beta}\\ \notag =&\frac{(6\beta-9-\alpha)(\alpha-\beta+3)+3-(\alpha+3)\beta}{2\beta}>-\frac{\alpha+3}{2}.
\end{align}
Then, Theorem \ref{thm:ARGCurvature} shows that $K_{BE}(+,x)>\frac{1}{2}$. Combining with Corollary \ref{cor:BEcurvature}, we have $K_{BE}(+,x)>\frac{1}{2}$ whenever $a\le 6\beta-9$. The positive curvature lower bound implies, by Theorem \ref{BEBM}, that $G$ is finite and has diameter at most $4d$. This completes the proof.
\end{proof}
%%%%%%%%%%%%%%%%%%%%%%%%%%%%%%%%%%%%%%%%%%%%%%%%%%%%%

\subsection{Bakry--\'Emery curvature formula with anti-balanced signatures}
In this subsection, we study the Bakry--\'Emery curvature of an amply regular graph associated with the all-$-1$ signature. By Proposition \ref{prop:switching}, this is equivalent to consider any anti-balanced signature on an amply regular graph.

\begin{theorem}\label{-1curvature}
    Let $(G,-)$ be a signed graph, where $G=(V,E)$ is an amply regular graph with parameters $(n,d,\alpha,\beta)$, and $-$ stands for the all-$-1$ signature. Let $x\in V$ be a vertex, and $A_{S_1(x)}$ be the adjacency matrix of the local graph induced by $S_1(x)$. Then we have
    \begin{equation}\label{-1inftycurvature}
        K_{BE}(-,x)=2+\left(\frac{2d(\beta-2)-\alpha^{2}}{2\beta}+\frac{5}{2}\alpha-2\beta+\frac{2}{\beta}\min_{\lambda\in\mathrm{sp}\left(A_{S_{1}(x)}\right)}\left(\lambda+\beta-\frac{\alpha}{2}\right)^2\right)_{-},
    \end{equation}
 where the minimum is taken over all eigenvalues of $A_{S_1(x)}$, and $a_-:=\min\{0,a\}$, for any $a\in \mathbb{R}$.
\end{theorem}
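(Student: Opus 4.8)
The plan is to mirror the proof of Theorem \ref{thm:ARGCurvature}, but carrying the all-$-1$ signature $\sigma\equiv-1$ through the curvature matrix machinery of Section \ref{section:BE}. First I would specialize the formulas \eqref{eq:2.1}--\eqref{eq:2.4} to $\sigma_{xy_i}=\sigma_{y_iy_j}=\sigma_{y_iz_k}=-1$ for every relevant edge. The key bookkeeping observation is that every product $\sigma_{xy_i}\sigma_{y_iz_k}$, $\sigma_{xy_i}\sigma_{xy_j}$, etc.\ contributes a sign of $+1$ (an even number of $-1$'s), while the isolated terms $-4a_{y_iy_j}\sigma_{y_iy_j}$ and $-4a_{y_iz_k}/d_{z_k}^-$ and $-2(d+1)\sigma_{xy_i}$ flip sign relative to the $+1$ case. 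So $4Q(x)$ will have the same diagonal block as before but with sign changes in the off-diagonal entries; using $d_{z_k}^-=\beta$ and $\sum_k d_{z_k}^-=d(d-\alpha-1)$ exactly as in the balanced case, I would extract the $d\times d$ submatrix $P^-(x)$ analogous to \eqref{eq:Pii}--\eqref{eq:Pij}.

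Next I would form $4BQ(x)B^\top$ and check that, as in the $+1$ case, the first row and column vanish, so that $\tfrac12 P^-(x)$ is again the curvature matrix and $K_{BE}(-,x)=\tfrac12\lambda_{\min}(P^-(x))$ by Theorem \ref{thm:curvature_matrix}. The crucial algebraic step is again to eliminate the combinatorial sum $\sum_k a_{y_iz_k}a_{z_ky_j}$ using the amply-regular identity \eqref{eq:key_AS1}; this rewrites $P^-(x)$ in terms of $A_{S_1(x)}$ and $A_{S_1(x)}^2$ plus multiples of $\mathbf I_d$ and $\mathbf J_d$. The difference from the balanced case is that the off-diagonal sign flips will change the coefficient of $A_{S_1(x)}$ from $-\tfrac{2\alpha}{\beta}$ to something like $+\tfrac{2\alpha}{\beta}-2$ (the extra $-2$ coming from the $-4a_{y_iy_j}\sigma_{y_iy_j}$ term now being $+4a_{y_iy_j}$), and will shift the diagonal. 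Completing the square in the resulting quadratic $\tfrac{2}{\beta}\lambda^2+(\text{linear})\lambda+(\text{const})$ should produce the shift $\lambda+\beta-\tfrac{\alpha}{2}$ appearing in \eqref{-1inftycurvature}.

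The remaining point is the constant vector: since $A_{S_1(x)}$ is $\alpha$-regular, $\mathbf 1$ is an eigenvector with eigenvalue $\alpha$, and I would compute $\tfrac12 P^-(x)\mathbf 1$ to identify the eigenvalue $2$ (the leading term in \eqref{-1inftycurvature}); the eigenvalues on $\mathbf 1^\perp$ give the quadratic expression in $\lambda$. Unlike Theorem \ref{thm:ARGCurvature}, here there is no need to worry about whether the minimum is attained at $\lambda=\alpha$ or on $\mathbf1^\perp$, because the minimizer of $(\lambda+\beta-\tfrac{\alpha}{2})^2$ over $\mathrm{sp}(A_{S_1(x)})\subseteq[-\alpha,\alpha]$ is attained at the smallest eigenvalue $\lambda_d$ (as $\beta-\tfrac\alpha2\ge -\tfrac\alpha2\ge -\alpha$ when $\beta\ge 0$), which always lies in the restricted spectrum $\mathrm{sp}(A_{S_1(x)}|_{\mathbf1^\perp})$ unless the local graph is a union of complete graphs — and in that degenerate case $\lambda_d=-1$ is still in both spectra. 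I therefore expect the statement to follow cleanly once the signs are tracked; the main obstacle is purely the careful sign bookkeeping in \eqref{eq:2.1}--\eqref{eq:2.4}, where a single mis-tracked $\sigma$ would derail the completed square, so I would double-check the off-diagonal entry $4Q(x)_{y_i,y_j}$ and the value of $a$ (which must stay $\ge 0$, consistent with \cite[Proposition A.1]{HL22}) most carefully.
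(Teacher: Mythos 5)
Your overall strategy (push the signature through the curvature-matrix machinery of Section \ref{section:BE} and complete the square) is the right one, but two concrete steps in your plan would fail. First, you cannot feed the all-$-1$ signature directly into \eqref{eq:2.1}--\eqref{eq:2.4} together with the matrix $B$ of \eqref{eq:B0}: that form of $B$, with an all-ones first row, is taken from \cite[(3.33)]{HL22} in the normalization where the signature is $+1$ on all edges incident to $x$. The paper therefore begins by invoking switching invariance (Proposition \ref{prop:switching}) to replace the all-$-1$ signature by the equivalent anti-balanced one with $\sigma_{xy_i}=+1$, $\sigma_{y_iz_k}=+1$ and $\sigma_{y_iy_j}=-1$, and only then applies Definition \ref{def:curvature_matrix} and Theorem \ref{thm:curvature_matrix}. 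Skipping this step gives a wrong answer: for $C_4=Q^2$ (so $d=2$, $\alpha=0$, $\beta=2$; here the all-$-1$ signature is balanced, so the curvature must equal $K_{BE}(+,x)=2$, in agreement with \eqref{-1inftycurvature}), the literal all-$-1$ substitution yields
\[
4Q(x)=\begin{pmatrix}8&4&4\\4&4&0\\4&0&4\end{pmatrix},\qquad
A_\infty=\begin{pmatrix}1&-1\\-1&1\end{pmatrix},\qquad \lambda_{\min}=0\neq 2 .
\]

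Second, your claim that ``as in the $+1$ case, the first row and column \textup{[of $4BQ(x)B^\top$]} vanish, so that $\tfrac12P^-(x)$ is again the curvature matrix'' is false, and this is not cosmetic. Even after the correct switching, the $(1,1)$-entry of $4BQ(x)B^\top$ equals $4\alpha d$ and the other first-row entries equal $6\alpha$, so the curvature matrix is $\tfrac12R(x)-\tfrac{9\alpha}{2d}\mathbf{J}_d$. Since $\tfrac12R(x)\mathbf{1}=\bigl(2+\tfrac{9\alpha}{2}\bigr)\mathbf{1}$, the rank-one Schur correction $-\tfrac{9\alpha}{2d}\mathbf{J}_d$ is exactly what makes $\mathbf{1}$ an eigenvector with eigenvalue $2$ and hence produces the leading constant $2$ in \eqref{-1inftycurvature}; without it your completed square cannot match the stated formula for any $\alpha>0$. (Your closing remark that one should double-check $a\geq0$ sits uneasily with the earlier assertion that the first row and column vanish.) The remaining steps you outline --- eliminating $\sum_k a_{y_iz_k}a_{z_ky_j}$ via \eqref{eq:key_AS1}, diagonalizing on $\mathbf{1}^\perp$, and arguing that the restriction to $\mathbf{1}^\perp$ can be dropped from the minimum --- do follow the paper's argument once these two points are repaired.
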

\begin{proof}
We first switch the all-$-1$ signature to be an anti-balanced signature $\sigma$ such that
\begin{align}
\sigma_{xy_{i}}&=+1,\,\,\text{for any $y_{i}\in S_{1}(x)$},\notag\\  
\sigma_{y_{i}y_{j}}&=-1,\,\,\text{for any edge $y_{i}y_{j}$, where $y_{i},y_{j}\in S_{1}(x)$},\notag\\
\sigma_{y_{i}z_{k}}&=+1,\,\,\text{for any edge $y_{i}z_{k}$, where $y_{i}\in S_{1}(x)$ and $z_{k}\in S_{2}(x)$}.\notag 
\end{align}
This can be achieved by choosing a switching function $\tau: V\to \{+1,-1\}$ such that $\tau(x)=+1, \tau\vert_{S_1(x)}\equiv -1$ and $\tau\vert_{S_2(x)}\equiv +1$. By Proposition \ref{prop:switching}, we have 
\begin{equation}\label{eq:switching_all_minus}
    K_{BE}(-,x)=K_{BE}(G,\sigma,x,\infty).
\end{equation}

Next we apply Theorem \ref{thm:curvature_matrix} to calculate $K_{BE}(G,\sigma,x,\infty)$. First, we derive from \eqref{eq:2.1}-\eqref{eq:2.4} the $(d+1)\times (d+1)$ matrix $4Q(x)$ as follows:
\begin{align*}
4Q(x)=
\begin{pmatrix}
(4+\alpha) d & -4-3\alpha & \cdots & -4-3\alpha \\
-4-3\alpha &   &   &   \\
\vdots &  &   R(x) &  \\
-4-3\alpha &   &   &   
\end{pmatrix},
\end{align*}
where $R(x)$ is a $d\times d$ submatrix given by
\begin{align*}
    (R(x))_{ii}&=2d+\alpha+2-\frac{4}{\beta}(d-\alpha-1),\,\,\,i=1,\dots,d.\\
    (R(x))_{ij}&=2+4a_{y_iy_j}-\frac{4}{\beta}\sum_{k=1}^\ell a_{y_{i}z_k}a_{z_ky_{j}},\,\,\,i\neq j.
\end{align*}
Recall the definition of matrix $B$, we calculate the product
\begin{align*}
    4BQ(x)B^{\top}=
    \begin{pmatrix}
        4\alpha d & 6\alpha & \cdots & 6\alpha \\
        6\alpha &  & &  & \\
        \vdots &  & R(x)   &   &  \\
        6\alpha &  &  &  &  \\
    \end{pmatrix}
\end{align*}
By Definition \ref{def:curvature_matrix}, we obtain the curvature matrix 
\begin{align}
2A_{\infty}(G,-,x)&=R(x)-
\begin{pmatrix}
    6\alpha \\
    6\alpha \\
    \vdots  \\
    6\alpha
\end{pmatrix}
\frac{1}{4\alpha d}
\begin{pmatrix}
    6\alpha & 6\alpha & \cdots & 6\alpha
\end{pmatrix}\notag\\
&=R(x)-\frac{9\alpha}{d}\mathbf{J}_d,\notag
\end{align}
where $\mathbf{J}_d$ is the $d\times d$ all-$1$ matrix. In the remaining part of the proof, we will write $A_\infty(x):=A_\infty(G,-,x)$ for short. 

Applying \eqref{eq:key_AS1}, we derive that for $i\neq j$
\begin{align*}
(R(x))_{ij}=-2+8\left(A_{S_1(x)}\right)_{ij}-\frac{4\alpha}{\beta}\left(A_{S_1(x)}\right)_{ij}+\frac{4}{\beta}+\frac{4}{\beta}\left(A_{S_1(x)}^{2}\right)_{ij}.
\end{align*}
Therefore, the curvature matrix is 
\begin{align*}
A_{\infty}(x)=\frac{2}{\beta}A_{S_1(x)}^{2}+\left(4-\frac{2\alpha}{\beta}\right)A_{S_1(x)}+\left(2+\frac{\alpha}{2}+d-\frac{2d}{\beta}\right)\mathbf{I}_{d}+\left(\frac{2}{\beta}-\frac{9\alpha}{2d}-1\right)\mathbf{J}_{d}.
\end{align*}

Then, we check for the constant vector $\mathbf{1}$ that
\begin{align*}
    A_{\infty}(x)\mathbf{1}=2\mathbf{1}.
\end{align*}
All the other eigenvalue of $A_{\infty}(x)$ corresponding to an eigenvector vertical to $\mathbf{1}$ is given by 
\begin{align*}
   &\frac{2}{\beta}\lambda^{2}+\left(4-\frac{2\alpha}{\beta}\right)\lambda +2+\frac{\alpha}{2}+d-\frac{2d}{\beta}\\
   =&2+\frac{2d(\beta-2)-\alpha^2}{2\beta}+\frac{5\alpha}{2}-2\beta+\frac{2}{\beta}\left(\lambda+\beta-\frac{\alpha}{2}\right)^2,
\end{align*}
where $\lambda\in \mathrm{sp}(A_{S_1(x)}\vert_{\mathbf{1}^\perp})$ is an eigenvalue of $A_{S_1(x)}$ corresponding to an eigenvector vertical to $\mathbf{1}$.
In conclusion, we have the Bakry--\'Emery curvature, by \eqref{eq:switching_all_minus} and Theorem \ref{thm:curvature_matrix},
\begin{align}\notag
      K_{BE}(-,x)=2+\left(\frac{2d(\beta-2)-\alpha^{2}}{2\beta}+\frac{5\alpha}{2}-2\beta+\frac{2}{\beta}\min_{\lambda\in\mathrm{sp}\left(A_{S_{1}(x)}|_{\mathbf{1}^{\perp}}\right)}\left(\lambda+\beta-\frac{\alpha}{2}\right)^2\right)_{-}.
\end{align}
By a similar argument in the proof of Theorem \ref{thm:ARGCurvature}, we arrive at
$$\min_{\lambda\in \mathrm{sp}\left({A_{S_1(x)}}\vert_{\mathbf{1}^\perp}\right)}\left(\lambda+\beta-\frac{\alpha}{2}\right)^2=\min_{\lambda\in \mathrm{sp}\left({A_{S_1(x)}}\right)}\left(\lambda+\beta-\frac{\alpha}{2}\right)^2,$$
\iffalse 
Similarly, we again consider the term $\min_{\lambda\in\mathrm{sp}\left(A_{S_{1}(x)}|_{\mathbf{1}^{\perp}}\right)}$ carefully. Assume that $\lambda_0=\alpha$ achieves the minimum in this term. That is,
$$(\lambda+\beta-\frac{\alpha}{2})^2 \geq (\lambda_0+\beta-\frac{\alpha}{2})^2,\text{ for any }\lambda \in \mathrm{sp}\left(A_{S_1(x)}\right)\setminus \{\lambda_0\}.$$
By a direct computation, we obtain
$$\lambda \leq -2\beta \text{ or }\lambda\geq\alpha.$$
Recall that all eigenvalues of an adjacent matrix for an $\alpha$-regular graphs lie in the interval $[-\alpha,\alpha]$, which gives the equality 
$$\lambda=\alpha, \text{ for all }\lambda \in \mathrm{sp}\left(A_{S_1(x)}\right).$$
However, this is contradiction to the fact that $\mathrm{trace}\left(A_{S_1(x)}\right)=\sum_{\lambda \in \mathrm{sp}\left(A_{S_1(x)}\right)}\lambda=0$.

Therefore, $\lambda=\alpha$ won't achieve the minimum in this term, even in the case of a complete graph, and we can drop the restriction $A_{S_1(x)}\vert_{\mathbf{1}^{\perp}}$ and simplify the formula as follows:
\begin{align}\notag
      K_{BE}(-,x)=2+\left(\frac{2d(\beta-2)-\alpha^{2}}{2\beta}+\frac{5\alpha}{2}-2\beta+\frac{2}{\beta}\min_{\lambda\in\mathrm{sp}\left(A_{S_{1}(x)}\right)}\left(\lambda+\beta-\frac{\alpha}{2}\right)^2\right)_{-}.
\end{align}
This completes the proof.
\fi
and the desired result directly follows.
\end{proof}

%\begin{corollary}
%For an amply regular graph $G=(V,E)$ with parameters $(n,d,\alpha,1)$ associated an anti-balanced signature, the Bakry--\'Emery curvature at a vertex $x\in V$ satisfies
%$$K_{BE}(-,x)=2+\left(-d-2+\frac{5\alpha}{2}\right)_{-}.$$
%\end{corollary}
%\begin{proof}
%Since $\beta=1$, the local graph induced by $S_{1}(x)$ is a disjoint union of complete graphs $K_{\alpha+1}$, with at least $2$ connected components. Therefore, we have  \[\min_{\lambda\in \mathrm{sp} \left(A_{S_1(x)}\vert_{\mathbf{1}^\perp}\right)}\left(\lambda+1-\frac{\alpha}{2}\right)^{2}=\frac{\alpha^2}{2}.\] 
%Applying Theorem \ref{-1curvature} yields
%\[K_{BE}(-,x)=2+\left(-d-2+\frac{5}{2}\alpha\right)_{-}.\]
%This completes the proof. 
%\end{proof}

%%%%%%%%%%%%%%%%%%%%%%%%%%%%%%%%%%%%%%%%%%%%%%%%%%%%%%%%%%%%%
\section{Applications in bounding eigenvalues, isoperimetric constants and volume growth of amply regular graphs}\label{section:furtherApp}
In this section, we discuss further applications of our discrete Ricci curvature estimates.
%\subsection{Application in bounding the diameter }

%\begin{proof}[Proof of Theorem \ref{thm:main1}]
%Since $ \beta\geq \max\{3,\alpha\}$, the bound  \[\mathrm{diam}(G)\leq \left\lfloor\frac{2d}{2+\left\lceil\frac{\beta-\alpha}{\beta-1}\alpha\right\rceil}\right\rfloor\]    follows directly from Theorem \ref{thm:beta_alpha_1} and Theorem \ref{LLYBM}, and the bound \[\mathrm{diam}(G)\leq \left\lfloor\frac{2d}{2+\frac{\alpha}{2}}\right\rfloor\] follows directly from Theorem \ref{BEcorollary} and Theorem \ref{BEBM}. Combining these two diameter estimates, we get the desired conclusion.
%\end{proof}

\subsection{Application in bounding the eigenvalues}\label{subsection:5.1}
In this subsection, we consider finite amply regular graphs. In fact, the finiteness will be guaranteed by the parameter conditions we impose. 
\begin{theorem}\label{thm:eigenvalueestimate}
Let $G=(V,E)$ be an amply regular graph with parameters $(n,d,\alpha,\beta)$. Let \[-d\leq\theta_{1}\leq\cdots\leq\theta_{n-1}\leq\theta_{n}=d\] be the eigenvalues of its adjacency matrix. 
\begin{itemize}
  \item [(i)]If $2d(\beta-2)\geq \alpha^2$, then \[\theta_{n-1}\leq d-2-\frac{\alpha}{2}.\] 
  \item [(ii)]If $2d(\beta-2)\geq \alpha(\alpha-\beta)$, then  \[\theta_{n-1}\leq d-2.\]
  \item [(iii)] If  $2d(\beta-2)\geq (\alpha-\beta)(\alpha-4\beta)$, then
  \[\theta_{1}\geq-d+2.\]
\end{itemize}
\end{theorem}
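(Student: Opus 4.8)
The plan is to translate each adjacency-eigenvalue bound into a Bakry--\'Emery curvature lower bound for the appropriate signature, and then invoke the standard spectral consequence of a curvature bound. Recall the general principle (proved in \cite{LMP}, see also \cite{LMP18}): if $K_{BE}(G,\sigma,x,\infty)\geq K$ holds at every vertex $x$ of a $d$-regular graph, then every eigenvalue $\mu$ of the connection Laplacian $-\Delta^\sigma$ satisfies $\mu\geq K$ (equivalently every eigenvalue of $\Delta^\sigma = A^\sigma - dI$ is at most $-K$, i.e. every eigenvalue of $A^\sigma$ is at most $d-K$). For the all-$+1$ signature this gives $\theta_{n-1}\leq d-K$ since $\theta_n=d$ is the trivial eigenvalue of $A=A^+$ sitting at the top and is excluded; for the all-$-1$ signature, the eigenvalues of $A^- = -A$ are $-\theta_n\leq\cdots\leq-\theta_1$, so an upper bound $d-K$ on all eigenvalues of $A^-$ forces $-\theta_1\leq d-K$, i.e. $\theta_1\geq -d+K$. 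Thus parts (i)--(ii) will follow from the balanced-signature curvature formula of Theorem \ref{thm:ARGCurvature} and part (iii) from the anti-balanced-signature formula of Theorem \ref{-1curvature}.

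For part (i): under the hypothesis $2d(\beta-2)\geq \alpha^2$ the negative-part term in \eqref{eq:curvatureformula} vanishes (since the parenthesised expression is $\geq \frac{2d(\beta-2)-\alpha^2}{2\beta}\geq 0$), so $K_{BE}(+,x)=2+\frac{\alpha}{2}$ at every vertex, and the spectral principle above with $\sigma\equiv+1$ yields $\theta_{n-1}\leq d-2-\frac{\alpha}{2}$. For part (ii): the hypothesis $2d(\beta-2)\geq\alpha(\alpha-\beta)$ rearranges to $\frac{2d(\beta-2)-\alpha^2}{2\beta}\geq -\frac{\alpha}{2}$, so even when the negative part is active we get $K_{BE}(+,x)\geq 2+\frac{\alpha}{2}-\frac{\alpha}{2}=2$ (the extra $\frac{2}{\beta}(\lambda-\frac{\alpha}{2})^2\geq 0$ only helps), hence $\theta_{n-1}\leq d-2$. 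For part (iii): apply Theorem \ref{-1curvature}. The parenthesised quantity in \eqref{-1inftycurvature} is at least $\frac{2d(\beta-2)-\alpha^2}{2\beta}+\frac{5\alpha}{2}-2\beta$; I would check that the hypothesis $2d(\beta-2)\geq(\alpha-\beta)(\alpha-4\beta)$ is exactly what makes this $\geq 0$ (expanding $(\alpha-\beta)(\alpha-4\beta)=\alpha^2-5\alpha\beta+4\beta^2$, the condition becomes $\frac{2d(\beta-2)-\alpha^2}{2\beta}\geq \frac{-5\alpha\beta+4\beta^2}{2\beta}=-\frac{5\alpha}{2}+2\beta$, which rearranges to precisely nonnegativity of the displayed term). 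Hence under this hypothesis the negative part is $0$, $K_{BE}(-,x)=2$ at every vertex, and the anti-balanced spectral principle gives $\theta_1\geq -d+2$.

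The only genuinely delicate point is justifying the spectral consequence "curvature $\geq K$ implies the connection-Laplacian spectrum lies in $[K,\infty)$" in the signed setting; this is where I would be most careful. The cleanest route is to plug an eigenfunction $f$ of $\Delta^\sigma$ with eigenvalue $-\mu$ into the $CD^\sigma(K,\infty)$ inequality \eqref{curvatureineq}, sum over all vertices, and use the summation-by-parts identities $\sum_x \Gamma^\sigma(f)(x) = -\sum_x f(x)\Delta^\sigma f(x) = \mu\sum_x f(x)^2$ and $\sum_x \Gamma_2^\sigma(f)(x) = \sum_x (\Delta^\sigma f)(x)^2 = \mu^2\sum_x f(x)^2$ (the $\Gamma_2$ identity follows because $\sum_x \Delta\Gamma^\sigma(f)(x)=0$ and $\sum_x\Gamma^\sigma(\Delta^\sigma f, f)(x) = -\sum_x \Delta^\sigma f\cdot\Delta^\sigma f(x)$), giving $\mu^2 \geq K\mu$, hence $\mu\geq K$ for $\mu>0$; the eigenvalue $\mu=0$ (constant function, balanced case) is exactly the excluded trivial one. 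Everything else is the short arithmetic of matching the three parameter inequalities to nonnegativity of the respective negative-part expressions, which I have sketched above and would present in a line or two each.
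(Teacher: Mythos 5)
Your proposal is correct and follows essentially the same route as the paper: each parameter inequality is matched to nonnegativity of the negative-part term in the curvature formulas of Theorems \ref{thm:ARGCurvature} and \ref{-1curvature}, and the resulting Bakry--\'Emery lower bounds are converted into adjacency-eigenvalue bounds via the Lichnerowicz-type estimate (Theorem \ref{BEEI}) together with the translation between $\Delta^\sigma$ and $A$ for the all-$+1$ and all-$-1$ signatures (Lemma \ref{lemma:LapAdj}). The only difference is that you sketch a summation-by-parts proof of the Lichnerowicz principle, which the paper simply cites from \cite{LMP}.
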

The Bakry--\'Emery curvature of a signed graph $(G,\sigma)$ is closely related to eigenvalue estimates for the Laplacian $\Delta^\sigma$. We say $\lambda^\sigma\in \mathbb{R}$ an eigenvalue of $\Delta^\sigma$ if there exists a non-zero function $f: V\to \mathbb{R}$ such that $\Delta^\sigma f+\lambda^\sigma f=0$. As a preparation for proving Theorem \ref{thm:eigenvalueestimate}, we recall the following Lichnerowicz type eigenvalue estimate.
\begin{theorem}[{\cite[Theorem 1.9]{LMP}}]\label{BEEI}
 Assume that a finite signed graph $(G,\sigma)$ satisfies $CD^{\sigma}(K,\infty)$ for $K\in \mathbb{R}$. Then, for any non-zero eigenvalue $\lambda^{\sigma}$ of $\Delta^{\sigma}$, we have $\lambda^{\sigma}\geq K$.
\end{theorem}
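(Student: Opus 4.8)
The plan is to adapt the classical Lichnerowicz--Bochner argument to the connection-Laplacian setting. Throughout, let $\langle f,g\rangle:=\sum_{x\in V}f(x)g(x)$ denote the $\ell^2$ inner product with respect to the counting measure on the finite vertex set $V$, and write $\|f\|^2=\langle f,f\rangle$. The backbone of the proof consists of two ``integration by parts'' identities obtained by summing the defining formulas of $\Gamma^\sigma$ and $\Gamma^\sigma_2$ over all vertices.

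First I would exploit the fact that the leading terms in both definitions involve the \emph{unsigned} Laplacian $\Delta$, for which $\sum_{x\in V}\Delta h(x)=0$ holds for every $h:V\to\IR$ (the edge contributions cancel in pairs). Applying this to $h=fg$ in the definition of $2\Gamma^\sigma(f,g)$, and using that $\Delta^\sigma$ is self-adjoint with respect to $\langle\cdot,\cdot\rangle$ (which follows from the symmetry $\sigma_{xy}=\sigma_{yx}$), I obtain
$$\sum_{x\in V}\Gamma^\sigma(f,g)(x)=-\langle\Delta^\sigma f,g\rangle.$$
Applying the same cancellation to $h=\Gamma^\sigma(f,f)$ in the definition of $2\Gamma_2^\sigma(f,f)$ and then substituting the previous identity gives
$$\sum_{x\in V}\Gamma_2^\sigma(f)(x)=-\sum_{x\in V}\Gamma^\sigma(\Delta^\sigma f,f)(x)=\langle(\Delta^\sigma)^2 f,f\rangle=\|\Delta^\sigma f\|^2.$$

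Next I would record the pointwise nonnegativity $\Gamma^\sigma(f)(x)=\tfrac12\sum_{y:\,xy\in E}(f(y)-\sigma_{xy}f(x))^2\ge 0$, obtained by expanding the definition of $2\Gamma^\sigma(f)$ and using $\sigma_{xy}^2=1$. Summing yields $\sum_{x\in V}\Gamma^\sigma(f)(x)=\langle-\Delta^\sigma f,f\rangle\ge 0$, so $-\Delta^\sigma$ is positive semidefinite and every eigenvalue $\lambda^\sigma$ is nonnegative; in particular any \emph{nonzero} eigenvalue is strictly positive. Now take an eigenfunction $f\neq 0$ with $\Delta^\sigma f=-\lambda^\sigma f$. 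The two identities above specialize to
$$\sum_{x\in V}\Gamma^\sigma(f)(x)=\lambda^\sigma\|f\|^2,\qquad \sum_{x\in V}\Gamma_2^\sigma(f)(x)=(\lambda^\sigma)^2\|f\|^2.$$

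Finally I would sum the curvature-dimension inequality $\Gamma_2^\sigma(f)(x)\ge K\,\Gamma^\sigma(f)(x)$ (the $N=\infty$ form of $CD^\sigma(K,\infty)$) over all $x\in V$ and insert the two identities to get $(\lambda^\sigma)^2\|f\|^2\ge K\lambda^\sigma\|f\|^2$. Since $f\neq 0$ and $\lambda^\sigma>0$, dividing by $\lambda^\sigma\|f\|^2>0$ yields $\lambda^\sigma\ge K$, as desired. The only genuinely delicate point is the bookkeeping in the signed integration-by-parts step: one must track that $\Gamma_2^\sigma$ mixes the unsigned Laplacian $\Delta$ (in the outer $\Delta\Gamma^\sigma$ term, which is what makes the total sum telescope to zero) with the signed Laplacian $\Delta^\sigma$ (in the remaining terms, which reassemble into $\|\Delta^\sigma f\|^2$). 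Establishing self-adjointness of $\Delta^\sigma$ and the pointwise sum-of-squares form of $\Gamma^\sigma$ are the two small lemmas that make everything fit together; the rest is the standard Lichnerowicz division argument.
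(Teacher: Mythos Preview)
Your argument is correct and is precisely the standard Lichnerowicz--Bochner computation adapted to the signed setting. Note, however, that the paper itself does not supply a proof of this theorem: it is quoted verbatim as \cite[Theorem~1.9]{LMP} and used as a black box, so there is no in-paper proof to compare against. Your proposal effectively reconstructs the proof from the cited reference, and the integration-by-parts identities you state (using that the outer $\Delta$ in both $\Gamma^\sigma$ and $\Gamma_2^\sigma$ is the unsigned Laplacian, hence sums to zero, while $\Delta^\sigma$ is self-adjoint) are exactly the right ingredients.
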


Recalling $\Delta^\sigma=A^\sigma-D$, the eigenvalues of the matrix $-\Delta^\sigma$ can be easily translated into eigenvalues of the signed adjacency matrix. We mention that the smallest eigenvalue of $\Delta^\sigma$ is $0$ if and only if $(G,\sigma)$ is balanced. Therefore, we have the following observations.
\begin{lemma}\label{lemma:LapAdj}
Let $(G,\sigma)$ be a finite signed graph with $G$ being $d$-regular. Let $\lambda^\sigma$ be the smallest non-zero eigenvalue of $\Delta^\sigma$ and $\theta_1\leq \cdots\leq\theta_{n-1}\leq \theta_n=d$ be the eigenvalues of adjacency matrix of the underlying graph $G$. 
 \begin{itemize}
    \item [(i)] If $\sigma$ is the all-$+1$ signature, then $\lambda^\sigma=d-\theta_{n-1}$.
    \item [(ii)] If $\sigma$ is the all-$-1$ signature,
    then $\lambda^\sigma=d+\theta_{1}$.
\end{itemize}   
\end{lemma}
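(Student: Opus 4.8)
\textbf{Proof proposal for Lemma~\ref{lemma:LapAdj}.}

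The plan is to unwind the definition of $\Delta^\sigma$ in each of the two signature cases and directly translate eigenvalue statements for $\Delta^\sigma$ into eigenvalue statements for the adjacency matrix $A$ of $G$. Since $G$ is $d$-regular, the degree matrix is $D = d\,\mathbf{I}$, so $\Delta^\sigma = A^\sigma - d\,\mathbf{I}$. In case (i), the all-$+1$ signature gives $A^\sigma = A$, hence $\Delta^\sigma = A - d\,\mathbf{I}$; a function $f$ satisfies $\Delta^\sigma f + \lambda^\sigma f = 0$ if and only if $A f = (d - \lambda^\sigma) f$. Thus the eigenvalues of $\Delta^\sigma$ are exactly $\theta_j - d$ for $j = 1, \dots, n$, and the non-zero ones correspond to $\theta_j \neq d$, i.e.\ to $j \leq n-1$ (using $\theta_n = d$ and that the top eigenvalue $d$ of a $d$-regular graph is simple when $G$ is connected — which I should note is the relevant situation, since otherwise ``$\lambda^\sigma$ the smallest non-zero eigenvalue'' must be interpreted with the multiplicity of $0$ in mind; in any case $d - \theta_{n-1}$ is the smallest value of $d - \theta_j$ over $j \leq n-1$). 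Therefore $\lambda^\sigma = d - \theta_{n-1}$.

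In case (ii), the all-$-1$ signature gives $A^\sigma = -A$, so $\Delta^\sigma = -A - d\,\mathbf{I}$, and $\Delta^\sigma f + \lambda^\sigma f = 0$ is equivalent to $A f = -(d - \lambda^\sigma) f = (\lambda^\sigma - d) f$. Hence the eigenvalues of $\Delta^\sigma$ are $\theta_j + d$ for $j = 1, \dots, n$. Here the value $0$ occurs in the spectrum of $\Delta^\sigma$ precisely when $\theta_1 = -d$, i.e.\ when $G$ is bipartite, which matches the stated fact that $0 \in \mathrm{sp}(\Delta^\sigma)$ iff $(G,\sigma)$ is balanced (the all-$-1$ signature is balanced exactly on bipartite graphs). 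The smallest non-zero eigenvalue of $\Delta^\sigma$ is then $\theta_1 + d$ when $\theta_1 > -d$, and when $\theta_1 = -d$ one should check that the next eigenvalue $\theta_2 + d$ is what is meant; either way, in the regime where the lemma is applied ($\theta_1 > -d$, or $\theta_1 = -d$ treated via multiplicity) the identity $\lambda^\sigma = d + \theta_1$ holds. This gives (ii).

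The computation itself is entirely routine; the only genuine point requiring care — and the step I expect to be the main (minor) obstacle — is bookkeeping around the eigenvalue $0$ and the phrase ``smallest non-zero eigenvalue.'' Concretely: in case (i) one must know that $\theta_n = d$ is the unique eigenvalue of $A$ equal to $d$ (equivalently $G$ is connected), so that removing the zero eigenvalue of $\Delta^\sigma$ leaves exactly $\{\theta_j - d : j \leq n-1\}$ with minimum $d - \theta_{n-1}$; and in case (ii) one must handle the possibility that $G$ is bipartite ($\theta_1 = -d$), in which case $0$ is an eigenvalue of $\Delta^\sigma$ and ``smallest non-zero'' refers to $d + \theta_2$. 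Since the lemma is invoked only to produce the bounds $\theta_{n-1} \leq d - K$ and $\theta_1 \geq -d + K$ via Theorem~\ref{BEEI}, and Theorem~\ref{BEEI} asserts $\lambda^\sigma \geq K$ for \emph{every} non-zero eigenvalue, the cleanest way to sidestep this is to observe that Theorem~\ref{BEEI} applied with the curvature bounds from Corollary~\ref{cor:BEcurvature} / Theorem~\ref{-1curvature} directly yields $d - \theta_j \geq K$ for all $j$ with $\theta_j \neq d$ and $d + \theta_j \geq K$ for all $j$ with $\theta_j \neq -d$; the extremal choices $j = n-1$ and $j = 1$ then give the stated inequalities without needing to pin down which eigenvalue is ``smallest.'' I would phrase the proof of the lemma to make these identifications and note the connectedness/bipartiteness caveats explicitly.
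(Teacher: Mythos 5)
Your computation is correct and is exactly the ``straightforward'' argument the paper omits: for a $d$-regular graph $\Delta^\sigma=A^\sigma-d\,\mathbf{I}$ with $A^\sigma=\pm A$, so in the paper's sign convention the eigenvalues of $\Delta^\sigma$ are $d-\theta_j$ (resp.\ $d+\theta_j$), and the smallest non-zero one is $d-\theta_{n-1}$ (resp.\ $d+\theta_1$). Your caveats are well taken --- the statement as written implicitly uses that $G$ is connected in (i) and non-bipartite in (ii) --- and your suggested fix of applying Theorem~\ref{BEEI} to \emph{every} non-zero eigenvalue $d\pm\theta_j$ is indeed the cleanest way to make the downstream eigenvalue bounds immune to that bookkeeping.
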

The proof of the above lemma is straightforward, which we omit. 

\begin{proof}[Proof of  Theorem \ref{thm:eigenvalueestimate}]
    (i) Recall Theorem \ref{thm:ARGCurvature}. If $2d(\beta-2)-\alpha^2\geq 0$, then we obtain $K_{BE}(+,x)=2+\frac{\alpha}{2}.$ 
    Therefore, we derive by applying Theorem \ref{BEEI} and Lemma \ref{lemma:LapAdj} in the case of $\sigma\equiv+1$ that
    \[\theta_{n-1}=d-\lambda^\sigma\leq d-2-\frac{\alpha}{2}.\]
    (ii) If $2d(\beta-2)\geq\alpha^2-\alpha\beta$, then we have $\frac{2d(\beta-2)}{2\beta}\geq-\frac{\alpha}{2}$ and hence $K_{BE}(+,x)\geq2$. 
    Therefore, we derive by Theorem \ref{BEEI} and Lemma \ref{lemma:LapAdj} that
    \[\theta_{n-1}=d-\lambda^\sigma\leq d-2.\]
    (iii) Recall Theorem \ref{-1curvature}. If $2d(\beta-2)\geq(\alpha-\beta)(\alpha-4\beta)$, then
    \begin{align*}
        \frac{2d(\beta-2)-\alpha^2}{2\beta}+\frac{5\alpha}{2}-2\beta\geq 0,
    \end{align*}
   and therefore, $K_{BE}(-,x)=2$. 
     Applying Theorem \ref{BEEI} and Lemma \ref{lemma:LapAdj} with $\sigma\equiv -1$ gives
    \[\theta_{1}=-d+\lambda^\sigma\geq-d+2.\]
We complete the proof.
\end{proof}
Next, we prove Theorem \ref{thm:dual} from the Introduction. 
\begin{proof}[Proof of Theorem \ref{thm:dual}]
By Theorem \ref{thm:eigenvalueestimate}(iii), we only need to prove that$$(\alpha-\beta)(\alpha-4\beta)\le 2d(\beta-2).$$
One one hand, suppose that $2\le\alpha\le\beta$. By Theorem \ref{thm:1.9.3}, we have $d\ge2\beta$. An elementary  computation shows that 
$$(\alpha-\beta)(\alpha-4\beta)\le4\beta(\beta-2)\le 2d(\beta-2).$$
On the other hand, suppose that $\beta<\alpha\le10\beta-12$. If $d< 3(\alpha-\beta+2)$, Theorem \ref{thm:1.2.3} implies that $G$ is the icosahedron or the line graph of a regular graph of girth at least $5$. Since the diameter of $G$ is at least $4$, $G$ is not the icosahedron. If $G$ is the line graph of a regular graph of girth at least $5$, then $\beta=1$, which is contradictory to the assumption that $\alpha\le 10\beta-12$. Thus, we have $d\ge 3(\alpha-\beta +2)$. Then
\begin{align}\notag
(\alpha-\beta)(\alpha-4\beta)-2d(\beta-2) &\le (\alpha-\beta)(\alpha-4\beta)-6(\alpha-\beta +2) (\beta-2)\\ \notag &=(\alpha-\beta)(\alpha-10\beta+12)-12\beta+24\le0.
\end{align}
We complete the proof.
\end{proof}

Besides, a positive Lin--Lu--Yau curvature lower bound also gives a Licnherowicz type eigenvalue estimate. We state below the result restricting to the case of a regular graph.
\begin{theorem}[{\cite[Theorem 4.2]{LLY11}}]\label{LLYEI}
Let $G=(V,E)$ be a $d$-regular finite graph and $\lambda$ be a nonzero eigenvalue of the Laplacian $\Delta$. If $\kappa_{LLY}(x,y)\geq k>0$ for any edge $xy\in E$, then
\[\lambda\geq dk.\]
\end{theorem}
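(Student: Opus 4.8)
The plan is to use the eigenfunction itself as a Lipschitz test function against the measures $\mu_x^{1/2}$, exploiting that the Lin--Lu--Yau hypothesis gives a contraction of the Wasserstein distance $W_1(\mu_x^{1/2},\mu_y^{1/2})$ along every edge, via the limit-free identity $\kappa_{1/2}(x,y)=\tfrac12\kappa_{LLY}(x,y)$ of Theorem \ref{thm:BCMLP}. The only computation one really needs is that, for an eigenfunction, the push-forward of $f$ by the averaging measure $\mu_x^{1/2}$ is again proportional to $f(x)$.

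So first I would fix a non-zero eigenvalue $\lambda$ of $\Delta$ together with an eigenfunction $f\colon V\to\mathbb{R}$, i.e.\ $\Delta f+\lambda f=0$, which reads $\sum_{y\sim x}f(y)=(d-\lambda)f(x)$ for all $x$. Since $\ker\Delta$ consists of the constants and $\lambda\neq 0$, the function $f$ is non-constant, so $L:=\max_{uv\in E}|f(u)-f(v)|>0$, and summing telescoping differences along a geodesic shows that $f/L$ is $1$-Lipschitz for the combinatorial distance. Next I would compute, for every $x\in V$,
\[
\sum_{z\in V}f(z)\,\mu_x^{1/2}(z)=\frac12 f(x)+\frac1{2d}\sum_{y\sim x}f(y)=\Bigl(1-\frac{\lambda}{2d}\Bigr)f(x).
\]
Choosing an edge $xy\in E$ with $|f(x)-f(y)|=L$ and applying the Kantorovich--Rubinstein inequality $\bigl|\sum_z g(z)\mu(z)-\sum_z g(z)\nu(z)\bigr|\le W_1(\mu,\nu)$ to the $1$-Lipschitz function $g=f/L$ with $\mu=\mu_x^{1/2}$, $\nu=\mu_y^{1/2}$, yields
\[
\Bigl|1-\frac{\lambda}{2d}\Bigr|\,L\le L\,W_1\bigl(\mu_x^{1/2},\mu_y^{1/2}\bigr)=L\bigl(1-\kappa_{1/2}(x,y)\bigr)\le L\Bigl(1-\frac{k}{2}\Bigr),
\]
where the middle equality is the definition of $\kappa_{1/2}$ on an edge (distance $1$) and the last step uses $\kappa_{1/2}(x,y)=\tfrac12\kappa_{LLY}(x,y)\ge\tfrac k2$ from Theorem \ref{thm:BCMLP}. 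Dividing by $L>0$ gives $|1-\lambda/(2d)|\le 1-k/2$, and the upper branch $1-\lambda/(2d)\le 1-k/2$ rearranges to $\lambda\ge dk$.

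The computation of the $\mu_x^{1/2}$-average and the final rearrangement are routine; the substance is the choice of the eigenfunction as the transport test function. The one point requiring care is that a max-gradient edge with $L>0$ genuinely exists, which is exactly where $\lambda\neq 0$ is used (it forces $f$ to be non-constant); this is the natural discrete analogue of the fact that a Lichnerowicz-type bound is vacuous for the zero eigenvalue. Regularity of $G$ enters only through Theorem \ref{thm:BCMLP}; alternatively one can avoid that theorem and run the same argument with $\mu_x^p$ as $p\nearrow 1$, using only that $\kappa_p(x,y)/(1-p)\to\kappa_{LLY}(x,y)\ge k$ at the fixed extremal edge to obtain $\lambda\ge d(k-\varepsilon)$ for every $\varepsilon>0$.
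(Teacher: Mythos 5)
Your argument is correct and is essentially the standard proof of this Lichnerowicz-type bound from the cited reference \cite[Theorem 4.2]{LLY11}, which the paper imports without reproving: test the eigenfunction (normalized to be $1$-Lipschitz via a maximal-gradient edge) against the Kantorovich dual of $W_1(\mu_x^{1/2},\mu_y^{1/2})$ and use $\kappa_{1/2}=\tfrac12\kappa_{LLY}$ from Theorem \ref{thm:BCMLP}. All steps check out, including the observation that $L>0$ because a $\lambda\neq 0$ eigenfunction cannot be constant on every component.
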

Combining both estimates derived from the Bakry--\'Emery curvature and Lin--Lu--Yau curvature bounds on amply regular graphs, we obtain Theorem \ref{thm:upper_bound_for_adjacency_eigencalue}.

\begin{proof}[Proof of Theorem \ref{thm:upper_bound_for_adjacency_eigencalue}]
(i) It follows by Theorem \ref{thm:beta_alpha_1} and Theorem \ref{LLYEI} that $$\theta_{n-1}\leq d-2-\left\lceil\frac{(\beta-\alpha)\alpha}{\beta-1}\right\rceil.$$
Corollary \ref{cor:BEcurvature}, Theorem \ref{BEEI}, and Lemma \ref{lemma:LapAdj} shows that $$\theta_{n-1}\leq d-2-\frac{\alpha}{2}.$$

(ii) The result is directly obtained from Corollary \ref{cor:BEcurvature}, Theorem \ref{BEEI}, and Lemma \ref{lemma:LapAdj}.
\end{proof}

%{\color{red} Below is moved from the original Introduction, which I temporarily put it here. Shiping.
%Two-sided estimate:
%\begin{corollary}
%Let $G=(V,E)$ be an amply regular graph with parameter $(n,d,\alpha,\beta)$. 
%If $2d(\beta-2)\geq \max\{\alpha^2, (\alpha-\beta)(\alpha-4\beta)\}$, then the adjacency eigenvalues of $G$ satisfies \[-d+2\leq \theta_1\leq\cdots\leq \theta_{n-1}\leq d-2-\max\left\{\frac{\alpha}{2},\left\lceil\frac{(\beta-\alpha)\alpha}{\beta-1}\right\rceil\right\}.\] 
%\end{corollary}}

\subsection{Application in bounding the 
 isoperimetric constants}

We first give the definitions of two important isoperimetric constants on graphs. For any two vertex set $A$ and $B$ of a graph $G=(V,E)$, let us denote by $|\cdot|$ the cardinality of a set, and by $|E(A, B)|:=\sum_{u\in A}\sum_{v\in B}a_{uv}$, where $a_{uv}=1$ if $uv\in E$ and $0$ otherwise. Notice that $|E(A,A)|$ is twice the number of edges with both ends in $A$.

\begin{definition}[Edge Cheeger constant]
    Let $G=(V,E)$ be a $d$-regular connected graph, the edge Cheeger constant $h$ is defined as
    \[h:=\inf_{\Omega\subset V,|\Omega|\leq\frac{1}{2}|V|}\frac{|E(\Omega,\Omega^{c})|}{d|\Omega|}.\]
\end{definition}
This constant is also called \emph{edge expansion}, measuring how fast this graph expand.
\begin{definition}
    [\cite{BJ13,Trevisan}, Bipartiteness constant]
    Let $G=(V,E)$ be a $d$-regular connected graph, the bipartiteness constant $\beta_0$ is defined as
    \[\beta_0:=\inf_{L,R\subset V,L\cap R =\emptyset}\frac{|E(L,L)|+|E(R,R)|+|\partial(L\cup R)|}{d|L\cup R|}.\]
    Here the notation $\partial(\Omega):=E(\Omega,\Omega^c)$ is the \emph{edge boundary set} of a subset $\Omega$.
\end{definition}
Notice that $\beta_0=0$ if and only if the graph $G$ is bipartite. The constant $\beta_0$ measures how far a graph is from being bipartite. The bipartite constant is introduced independently by Bauer and Jost \cite{BJ13} and Trevisan \cite{Trevisan}. Bauer and Jost call the constant $1-\beta_0$ dual Cheeger constant.

Both $h$ and $\beta_0$ are closed related to eigenvalues of Laplacians via so-called Cheeger and dual Cheeger inequalities, for which we refer to \cite{BJ13} and references therein. Translating into adjacency eigenvalues, there holds
$dh^2/2\leq d-\theta_{n-1}\leq 2dh,\,\,\text{and}\,\,d\beta_0^2/2\leq d+\theta_{1}\leq 2d\beta_0$.

%We now recall Cheeger's inequality and dual Cheeger's inequality on graphs.
%\begin{theorem}[Cheeger inequality]
%    Let $G=(V,E)$ be a graph, $\lambda_{1}$ be the smallest non-zero eigenvalue of normalized Laplacian $L$ and $h$ is the Cheeger constant on graph, then we have the following estimate
%    \[\frac{h^2}{2}\leq\lambda_{1}\leq2h.\]
%\end{theorem}
%\begin{theorem}[\cite{BJ13}, Dual Cheeger's inequality]
%      Let $G=(V,E)$ be a graph, $\lambda_{n-1}$ be the largest non-zero eigenvalue of normalized Laplacian $L$ and $\beta_{0}$ is the bipartiteness constant on graph, then we have the following estimate
%      \[\frac{\beta_0^2}{2}\leq2-\lambda_{n-1}\leq2\beta_0.\]
%\end{theorem}
Recalling the eigenvalue estimates on amply regular graphs from above, we further derive estimates for edge Cheeger and bipartiteness constants.
\begin{corollary}\label{cor:CheegerDual}
Let $G=(V,E)$ be an amply regular graph with parameters $(n,d,\alpha,\beta)$. 
\begin{itemize}
    \item [(i)] Suppose $1\neq \beta\geq \alpha$. Then for any subset $\Omega\subseteq V$ with $|\Omega|\leq \frac{1}{2}|V|$, we have 
    \[|E(\Omega, \Omega^c)|\geq |\Omega|.\]
    If we further assume $(\alpha,\beta)\neq (2,2)$, then
        \[|E(\Omega, \Omega^c)|\geq \left(1+\max\left\{\frac{\alpha}{2},\left\lceil\frac{(\beta-\alpha)\alpha}{\beta-1}\right\rceil\right\}\right)|\Omega|.\]
    \item [(ii)] Suppose $2d(\beta-2)\geq(\alpha-\beta)(\alpha-4\beta)$. Then for any subset $\Omega\subseteq V$, we have
        \begin{align*}
        \min_{L\sqcup R=\Omega}\left(|E(L,L)|+|E(R,R)|\right)+|E(\Omega,\Omega^{c})|\geq |\Omega|.
    \end{align*}
    where the minimization is taking over all possible partitions $\{L,R\}$ of $\Omega$, where one of $L$ and $R$ can be empty.
\end{itemize}
\end{corollary}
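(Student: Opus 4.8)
The plan is to read both parts off the eigenvalue estimates already proved for amply regular graphs, combined with the edge Cheeger and dual Cheeger inequalities recalled just above the corollary (namely $d-\theta_{n-1}\le 2dh$ and $d+\theta_1\le 2d\beta_0$). Part (ii) and the first inequality of part (i) come out at once this way; the second inequality of part (i) is the delicate point, because the spectral route alone loses a factor of $2$ on the term $M:=\max\{\alpha/2,\lceil(\beta-\alpha)\alpha/(\beta-1)\rceil\}$.

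\textbf{Part (ii) and the first inequality of (i).} The hypothesis $2d(\beta-2)\ge(\alpha-\beta)(\alpha-4\beta)$ of (ii) is exactly the one under which Theorem \ref{thm:eigenvalueestimate}(iii) gives $\theta_1\ge -d+2$, i.e.\ $d+\theta_1\ge 2$, and the dual Cheeger inequality then yields $d\beta_0\ge 1$. Now fix $\Omega\subseteq V$ and a partition $\Omega=L\sqcup R$; since the edge boundary $\partial(L\cup R)=E(\Omega,\Omega^c)$ is independent of the partition, the definition of $\beta_0$ gives $|E(L,L)|+|E(R,R)|+|E(\Omega,\Omega^c)|\ge d\beta_0|\Omega|\ge|\Omega|$, and minimising over partitions finishes (ii). For the first inequality of (i): Corollary \ref{cor:BEcurvature}(ii)--(iii) gives $K_{BE}(+,x)\ge 2$ at every vertex whenever $1\neq\beta\ge\alpha$, so Theorem \ref{BEEI} applied with the all-$+1$ signature, together with Lemma \ref{lemma:LapAdj}(i), gives $d-\theta_{n-1}\ge 2$; the edge Cheeger inequality then gives $h\ge 1/d$, hence $|E(\Omega,\Omega^c)|\ge dh|\Omega|\ge|\Omega|$ for $|\Omega|\le|V|/2$.

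\textbf{The second inequality of (i).} Here the spectral input is $\theta_{n-1}\le d-2-M$ from Theorem \ref{thm:upper_bound_for_adjacency_eigencalue}(i), but $d-\theta_{n-1}\ge 2+M$ only produces $|E(\Omega,\Omega^c)|\ge(1+M/2)|\Omega|$ through Cheeger, one factor of two short of the claim. To reach the sharp constant I would bypass the spectral Cheeger inequality and argue directly from the Lin--Lu--Yau curvature bound and the local combinatorics behind it. Take $\Omega$ minimising $|E(\Omega,\Omega^c)|/|\Omega|$ (finiteness of $G$ is guaranteed by the parameter hypotheses, via Theorem \ref{thm:finite_ARG}); testing the Wasserstein definition of $\kappa_{LLY}$ against the $1$-Lipschitz function $\mathbf 1_\Omega$ forces, at every boundary edge $xy$ with $x\in\Omega$, $y\notin\Omega$, the inequality $b_x+a'_y\ge d\kappa_{LLY}(x,y)$, where $b_x$ is the number of neighbours of $x$ outside $\Omega$ and $a'_y$ the number of neighbours of $y$ inside $\Omega$. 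I would then feed in the matching $\mathcal M_{xy}$ of the auxiliary bipartite graph $H_G$ from the proof of Theorem \ref{thm:beta_alpha_1}, with its guaranteed $\lceil\alpha(\beta-\alpha)/(\beta-1)\rceil$ edges inside $E_5$, together with the Bakry--\'Emery formula $K_{BE}(+,x)=2+\alpha/2$, to upgrade these per-edge relations into the averaged bound $2e(\Omega)\le(d-1-M)|\Omega|$ on the number $e(\Omega)$ of internal edges of $\Omega$; since $|E(\Omega,\Omega^c)|=d|\Omega|-2e(\Omega)$, this is precisely the claim.

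\textbf{Main obstacle.} The difficulty is exactly this last upgrade. The pointwise consequence of positive Ollivier curvature, $b_x+a'_y\ge 2+M$ at boundary edges, is by itself too weak to give $|E(\Omega,\Omega^c)|\ge(1+M)|\Omega|$ — it remains consistent with the weaker bound $(1+M/2)|\Omega|$ — so the argument must genuinely exploit the rigidity of the amply-regular structure (constancy of $\alpha$ and $\beta$, hence the full K\"onig decomposition of $H_G$ into perfect matchings) rather than the numerical curvature value alone, and controlling how a boundary vertex with many internal neighbours is compensated by additional crossing edges elsewhere is the step that requires real work.
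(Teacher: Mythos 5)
Your arguments for part (ii) and for the first inequality of part (i) are correct and coincide with the paper's proof, which consists of the single observation that the claims follow directly from $d-\theta_{n-1}\leq 2dh$, $d+\theta_1\leq 2d\beta_0$ and the eigenvalue estimates of Subsection \ref{subsection:5.1}; your first two paragraphs are exactly this argument written out.

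For the second inequality of part (i) your diagnosis is correct, and it is the key point. Writing $M=\max\{\frac{\alpha}{2},\lceil\frac{(\beta-\alpha)\alpha}{\beta-1}\rceil\}$, the input $\theta_{n-1}\le d-2-M$ from Theorem \ref{thm:upper_bound_for_adjacency_eigencalue}(i) combined with $d-\theta_{n-1}\le 2dh$ gives only $|E(\Omega,\Omega^c)|\ge dh|\Omega|\ge\frac{2+M}{2}|\Omega|=(1+\frac{M}{2})|\Omega|$, and even the sharper Rayleigh-quotient form $|E(\Omega,\Omega^c)|\ge(d-\theta_{n-1})\frac{|\Omega||\Omega^c|}{n}$ cannot beat this when $|\Omega|$ is near $\frac{n}{2}$. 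Since the paper's proof of this corollary is precisely that spectral one-liner, the factor-of-two loss you identify is not a defect of your route relative to the paper's: the constant $1+M$ does not follow from the paper's cited facts either. (Concretely, for the $3\times 3$ rook's graph one has $d=4$, $M=1$, $\theta_{n-1}=1$, so the spectral bound gives $\frac{3}{2}|\Omega|$ while the corollary asserts $2|\Omega|$, which is attained with equality by a row; so any proof of the stated constant must use more than the spectral gap.) Your proposed repair --- upgrading the per-boundary-edge inequality $b_x+a'_y\ge 2+M$ into $2e(\Omega)\le(d-1-M)|\Omega|$ using the K\"onig matchings of $H_G$ --- is not carried out, and as you acknowledge the averaging step is exactly where the difficulty lies; no argument is supplied for it, so as a proof of the corollary as stated your proposal is incomplete. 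What you have actually established is the version with constant $1+\frac{1}{2}\max\{\frac{\alpha}{2},\lceil\frac{(\beta-\alpha)\alpha}{\beta-1}\rceil\}$, which is also all that the paper's own argument delivers.
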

\begin{proof}
This follows directly from the fact $d-\theta_{n-1}\leq 2dh$ and $d+\theta_1\leq 2d\beta_0$ and the eigenvalue estimates proved in Subsection \ref{subsection:5.1}. 
\end{proof}
%\begin{corollary}
%    Let $G=(V,E)$ be an amply regular graph with parameters $(n,d,\alpha,\beta)$. Let $h$ and $\beta_0$ be Cheeger constant and bipartiteness constant, respectively. Then we have the following estimates
%    \begin{itemize}
%        \item [(i)] If $1\neq\beta\geq\alpha$ and $(\alpha,\beta)\neq (2,2)$, then
%        \[h\geq\frac{2+\max\left\{\frac{\alpha}{2},\left\lceil\frac{(\beta-\alpha)\alpha}{\beta-1}\right\rceil\right\}}{2d}.\]
%        \item [(ii)] If $(\alpha,\beta)=(2,2)$, then 
%        \[h\geq\frac{1}{d}.\]
%        \item [(iii)] If $2d(\beta-2)\geq(\alpha-\beta)(\alpha-4\beta)$, then
%        \[\beta_0\geq\frac{1}{d}.\]
 %   \end{itemize}
%\end{corollary}
%\begin{remark}
%    Observe that in the case of $\beta\geq\alpha$, combining $(i)$ and $(ii)$ for the Cheeger constant $h$ gives $$h\geq\frac{1}{d}.$$ Recalling the definition of $h$, for any subset $\Omega$ with $|\Omega|\leq\frac{1}{2}|V|$, we have 
%   \begin{align}
 %   |E(\Omega,\Omega^{c})|\geq |\Omega|.
  %  \end{align}
 %   This is a nontrivial combinatorical property for amply regular graphs.
%\end{remark}
%\begin{remark}
%    Similarly, from the estimate $(iii)$ for the bipartiteness constant $\beta_{0}$, we can further derive that for any subset $\Omega\subset V$, it holds that
    %\begin{align}
   %     \min_{\textbf{partition }L\sqcup R=\Omega}(|E(L,L)|+|E(R,R)|)+|E(\Omega,\Omega^{c})|\geq |\Omega|.
   % \end{align}
%\end{remark}

Now, we recall the definition of \emph{expander}.

\begin{definition}[{\cite[p.151]{AS16}}]
A graph $G = (V, E)$ is called an ($n, d, c$)-expander if it has $n$ vertices, the maximum degree of a vertex is $d$, and, for every set of vertices $W \subset V $of cardinality $|W| \le n/2$, the inequality $|N(W)| \ge c|W|$ holds, where $N(W)$ denotes the set of all vertices in $V \backslash W$ adjacent to some vertex in $W$.
\end{definition}

The following theorem mentioned in \cite[p.153]{AS16} shows a correspondence between the second largest eigenvalue of adjacency matrix of a graph and its expansion properties. For a proof, we refer to \cite[Theorem 4.3]{AM85}.

\begin{theorem}[{\cite[p.153]{AS16}}]\label{expander}
A $d$-regular graph $G$ with $n$ vertices is an ($n$, $d$, $c$)-expander for $c = 2(d-\theta_{n-1})/(3d-2\theta_{n-1})$.
\end{theorem}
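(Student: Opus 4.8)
The plan is to recognize Theorem \ref{expander} as a purely spectral consequence of known results and reduce it to a standard vertex-expansion estimate. First I would recall the elementary inequality relating the vertex boundary $N(W)$ of a set $W$ with $|W| \le n/2$ to the adjacency eigenvalue $\theta_{n-1}$ in a $d$-regular graph. The key algebraic input is the following: if $W \subset V$ with $|W| = w$ and one writes $\mathbf{1}_W = \tfrac{w}{n}\mathbf{1} + g$ with $g \perp \mathbf{1}$, then $\langle A\mathbf{1}_W, \mathbf{1}_W\rangle = \tfrac{d w^2}{n} + \langle A g, g\rangle \le \tfrac{dw^2}{n} + \theta_{n-1}\|g\|^2 = \tfrac{dw^2}{n} + \theta_{n-1} w (1 - \tfrac{w}{n})$. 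On the other hand, $\langle A\mathbf{1}_W, \mathbf{1}_W\rangle = |E(W,W)| = dw - |E(W, W^c)|$, so one obtains $|E(W,W^c)| \ge (d - \theta_{n-1}) w (1 - \tfrac{w}{n}) \ge (d-\theta_{n-1}) w / 2$ when $w \le n/2$.

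Next I would pass from the edge boundary to the vertex boundary $N(W)$. Since $G$ is $d$-regular, every edge leaving $W$ has its other endpoint in $N(W)$, so $|E(W, W^c)| \le d\,|N(W)|$, giving $|N(W)| \ge \tfrac{1}{d}|E(W,W^c)| \ge \tfrac{d - \theta_{n-1}}{2d} w$. This already yields expansion with constant $(d-\theta_{n-1})/(2d)$. To get the sharper constant $c = 2(d-\theta_{n-1})/(3d - 2\theta_{n-1})$ claimed in the statement, I would run the standard refinement: apply the edge-boundary bound not to $W$ itself but to $W \cup N(W)$, or equivalently exploit that the edges inside $N(W)$ and those returning from $N(W)$ into $W$ can be double-counted more carefully. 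Concretely, one estimates $|E(W \cup N(W), (W\cup N(W))^c)|$ from below using the spectral bound and from above by $d(|N(W)| - \text{something})$; balancing the two counts of edges incident to $N(W)$ produces the denominator $3d - 2\theta_{n-1}$. Since the excerpt explicitly points to \cite[Theorem 4.3]{AM85} for the proof, I would cite that reference for this optimized counting rather than reproduce it.

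The main obstacle is precisely this last optimization: getting the exact constant $2(d-\theta_{n-1})/(3d-2\theta_{n-1})$ rather than the cruder $(d-\theta_{n-1})/(2d)$ requires the two-sided edge count on $W \cup N(W)$ together with an arithmetic optimization over the overlap, and it is easy to be off by constant factors if done carelessly. Everything else — the Rayleigh-quotient bound on $\langle Ag,g\rangle$, the identity $|E(W,W)| = dw - |E(W,W^c)|$, and the degree bound $|E(W,W^c)| \le d|N(W)|$ — is routine. Accordingly, my proof would state the spectral edge-expansion inequality with full detail, then invoke Theorem \ref{expander}'s cited source \cite{AM85} for the sharp vertex-expansion constant, noting only that in our setting the relevant $\theta_{n-1}$ is controlled by the curvature estimates of Theorem \ref{thm:upper_bound_for_adjacency_eigencalue}, which is what makes the statement quantitatively meaningful for amply regular graphs.
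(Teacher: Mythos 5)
The paper offers no proof of this statement at all: it is quoted from \cite[p.153]{AS16} with the proof deferred to \cite[Theorem 4.3]{AM85}, which is exactly where you also end up, so your proposal matches the paper's treatment, and your warm-up derivation of the weaker constant $(d-\theta_{n-1})/(2d)$ via the Rayleigh-quotient bound on $\langle Ag,g\rangle$, the identity $|E(W,W)|=dw-|E(W,W^c)|$, and $|E(W,W^c)|\le d|N(W)|$ is correct. The one caveat is that your sketched refinement (reapplying the edge-boundary bound to $W\cup N(W)$) is not the mechanism that yields the exact constant $2(d-\theta_{n-1})/(3d-2\theta_{n-1})$ --- the cited argument instead uses a three-valued test function, constant on $W$, zero on $N(W)$, and constant on $V\setminus(W\cup N(W))$, orthogonal to $\mathbf{1}$ --- but since you explicitly invoke \cite{AM85} for that step rather than relying on the sketch, nothing essential is missing.
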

Combining with our eigenvalue bound, we obtain the following conclusion.
\begin{theorem}\label{thm:expander}
 For an amply regular graph $G$ with parameters $(n,d,\alpha,\beta)$, if $1\ne \beta > \alpha$, then $G$ is an ($n$, $d$, $c$)-expander with
 $$c=1-\frac{d}{d+4+\max\left\{\alpha,2\left\lceil\frac{(\beta-\alpha)\alpha}{\beta-1}\right\rceil\right\}}.$$
%That is, for any $W \subset V$ with $|W| \le n/2$, we have$$|N(W)| \ge \left(1-\frac{d}{d+4+\max\left\{\alpha,2\left\lceil\frac{(\beta-\alpha)\alpha}{\beta-1}\right\rceil\right\}}\right)|W|.$$
\end{theorem}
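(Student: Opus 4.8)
The plan is to combine the eigenvalue estimate in Theorem~\ref{thm:upper_bound_for_adjacency_eigencalue}(i) with the eigenvalue–expansion correspondence in Theorem~\ref{expander}. Since $G$ is amply regular with parameters $(n,d,\alpha,\beta)$ and $1\neq \beta>\alpha$, we automatically have $\beta\geq 2$ and $(\alpha,\beta)\neq(2,2)$, so Theorem~\ref{thm:upper_bound_for_adjacency_eigencalue}(i) applies and yields
\[
\theta_{n-1}\leq d-2-\max\left\{\frac{\alpha}{2},\left\lceil\frac{(\beta-\alpha)\alpha}{\beta-1}\right\rceil\right\}.
\]
Write $m:=\max\left\{\frac{\alpha}{2},\left\lceil\frac{(\beta-\alpha)\alpha}{\beta-1}\right\rceil\right\}$, so that $d-\theta_{n-1}\geq 2+m$. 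Note also that $n$ is finite here: the parameter condition $\beta>\alpha$ forces $\alpha\leq 6\beta-9$ for $\beta\geq 2$ (indeed $6\beta-9-\alpha > 6\beta-9-\beta = 5\beta-9\geq 1$), so Theorem~\ref{thm:finite_ARG} guarantees finiteness and Theorem~\ref{expander} is applicable.

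Next I would feed this bound into the expansion constant $c=\frac{2(d-\theta_{n-1})}{3d-2\theta_{n-1}}$ from Theorem~\ref{expander}. The key elementary observation is that the function $t\mapsto \frac{2t}{3d-2(d-t)}=\frac{2t}{d+2t}=1-\frac{d}{d+2t}$ is monotonically increasing in $t=d-\theta_{n-1}$. Hence, using $d-\theta_{n-1}\geq 2+m$,
\[
c=1-\frac{d}{d+2(d-\theta_{n-1})}\geq 1-\frac{d}{d+2(2+m)}=1-\frac{d}{d+4+2m}.
\]
Finally, $2m=\max\{\alpha,\, 2\lceil\frac{(\beta-\alpha)\alpha}{\beta-1}\rceil\}$, which gives exactly the stated value of $c$.

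There is essentially no obstacle: the proof is a short chain of substitutions, the only points requiring a line of justification being (a) that the hypotheses of Theorem~\ref{thm:upper_bound_for_adjacency_eigencalue}(i) are met under $1\neq\beta>\alpha$, (b) that $G$ is finite so that Theorem~\ref{expander} can be invoked, and (c) the monotonicity of $t\mapsto 1-\frac{d}{d+2t}$ used to turn the eigenvalue inequality into the expansion inequality in the correct direction. I would write these three points explicitly and leave the algebraic simplification of $3d-2\theta_{n-1}=d+2(d-\theta_{n-1})$ as a one-line computation.
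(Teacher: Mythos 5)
Your proposal is correct and is exactly the paper's argument: the paper proves Theorem \ref{thm:expander} by directly combining Theorem \ref{thm:upper_bound_for_adjacency_eigencalue} with Theorem \ref{expander}, and your write-up simply fills in the routine details (applicability of the hypotheses, finiteness, and the monotone rewriting $c=1-\frac{d}{d+2(d-\theta_{n-1})}$) that the paper leaves implicit.
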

\begin{proof}
It is a direct consequence of Theorem \ref{thm:upper_bound_for_adjacency_eigencalue} and Theorem \ref{expander} .
\end{proof}

\subsection{Application in bounding the volume growth}

\begin{theorem}\label{thm:Volume Growth}
    Let $G=(V,E)$ be an amply regular graph with parameters $(n,d,\alpha,\beta)$ such that $\kappa_{LLY}(x,y)\geq k >0$ for any $xy\in E$. Then, we have for any $x \in V$ and $i\geq1$ that
    $$|S_{i+1}(x)|\leq \frac{d-\max\left\{\alpha+1, ikd/2\right\}}{\beta}|S_{i}(x)|.$$
\end{theorem}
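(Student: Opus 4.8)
The plan is to combine a standard counting of edges between consecutive spheres with two separate improvements on the "out-degree" of a vertex in $S_i(x)$: one coming from the amply regular structure (Terwilliger's lemma), and one coming from the positive Lin--Lu--Yau curvature bound (via a Lin--Lu--Yau-type volume argument). First I would set up the double counting. For a vertex $y\in S_i(x)$ with $i\geq 1$, write $k_i^-(y):=|S_1(y)\cap S_{i-1}(x)|$, $a_i(y):=|S_1(y)\cap S_i(x)|$, and $b_i(y):=|S_1(y)\cap S_{i+1}(x)|$, so that $k_i^-(y)+a_i(y)+b_i(y)=d$. Counting edges between $S_i(x)$ and $S_{i+1}(x)$ in two ways gives
\[
|E(S_i(x),S_{i+1}(x))| = \sum_{y\in S_i(x)} b_i(y) = \sum_{z\in S_{i+1}(x)} k_{i+1}^-(z).
\]
The amply regular hypothesis controls the right-hand side from below: any $z\in S_{i+1}(x)$ has a neighbour $y\in S_i(x)$, and $z$ together with any vertex of $S_{i-1}(x)$ on a geodesic to $x$ (a vertex at distance $2$ from $z$) have $\beta$ common neighbours, forcing $k_{i+1}^-(z)\geq \beta$. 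Hence $|S_{i+1}(x)|\leq \frac{1}{\beta}\sum_{y\in S_i(x)} b_i(y)$, and it remains to bound $b_i(y)=d-k_i^-(y)-a_i(y)$ from above, i.e. to bound $k_i^-(y)+a_i(y)$ from below, by $\max\{\alpha+1,\ ikd/2\}$.

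The bound $k_i^-(y)+a_i(y)\geq \alpha+1$ is the easy half: pick a neighbour $y'\in S_{i-1}(x)$ of $y$; then $y$ and $y'$ are adjacent, so they have $\alpha$ common neighbours, all of which lie in $S_{i-1}(x)\cup S_i(x)$ (neighbours of $y'$ are at distance $\leq i$ from $x$), and together with $y'$ itself this gives $k_i^-(y)+a_i(y)\geq \alpha+1$. This is exactly the kind of estimate appearing in \cite[Lemma 3.3]{Terwilliger83}. The bound $k_i^-(y)+a_i(y)\geq ikd/2$ is the curvature half, and this is where I expect the real work to be. The idea, following \cite[Lemma 4.4]{LLY11}, is that positive Lin--Lu--Yau curvature $\kappa_{LLY}\geq k$ forces, via Theorem \ref{thm:upperk}, a matching-type constraint at every edge: along a geodesic edge $\{y',y\}$ with $y'\in S_{i-1}(x)$, $y\in S_i(x)$, one gets $|\Delta_{y'y}|\geq kd-2$, and more importantly an iterated application along a geodesic from $x$ to $y$ propagates a linear-in-$i$ lower bound on the number of neighbours of $y$ that are "closer to $x$ or at the same level", yielding $k_i^-(y)+a_i(y)\geq ikd/2$. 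Concretely I would induct on $i$: the curvature lower bound at the edge between $S_{i-1}$ and $S_i$ contributes an increment of $kd/2$ to the quantity $k_i^-(y)+a_i(y)$ relative to the corresponding quantity one level in, using that a perfect/large matching between the $N$-sets forces many common neighbours to persist across levels.

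The main obstacle is making the curvature-to-geometry step rigorous: Theorem \ref{thm:upperk} is stated for a single edge and gives a lower bound on $|\Delta_{xy}|$, but here I need to aggregate this information along an entire geodesic and translate it into a lower bound on $|S_1(y)\cap (S_{i-1}(x)\cup S_i(x))|$ that grows linearly in $i$ with slope $kd/2$. I would handle this by invoking precisely the argument of \cite[Lemma 4.4]{LLY11} (which the paper explicitly flags as one of its two key lemmas for the volume estimate), adapting it to the amply regular setting where the $\Delta$-sets are uniformly of size $\alpha$; the factor $kd/2$ should emerge because $\kappa_{LLY}=2\kappa_{1/2}$ (Theorem \ref{thm:BCMLP}) converts the curvature bound at the $p=\tfrac12$ scale into a statement about half of the $d$ neighbours. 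Once both lower bounds $k_i^-(y)+a_i(y)\geq \alpha+1$ and $k_i^-(y)+a_i(y)\geq ikd/2$ are in hand, taking the maximum and substituting $b_i(y)=d-(k_i^-(y)+a_i(y))$ into $|S_{i+1}(x)|\leq \frac{1}{\beta}\sum_{y\in S_i(x)}b_i(y)$ immediately gives the claimed inequality.
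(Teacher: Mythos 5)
Your overall frame — double counting $|E(S_i(x),S_{i+1}(x))|$, the lower bound $d_w^{x,-}\geq\beta$ for $w\in S_{i+1}(x)$ from the amply regular condition, and the bound $d_y^{x,+}\leq d-\alpha-1$ from the $\alpha$ common neighbours of $y$ and a neighbour in $S_{i-1}(x)$ — is exactly the paper's proof. The gap is in the curvature half, which you yourself flag as the unresolved obstacle. The mechanism you propose (an induction on $i$ in which each edge of a geodesic contributes an increment of $kd/2$ to $d_y^{x,-}+a_i(y)$, driven by Theorem \ref{thm:upperk} and the persistence of matchings between $N$-sets across levels) would not work: in an amply regular graph $|\Delta_{y'y}|=\alpha$ for every edge, so the edge-wise bound $|\Delta_{y'y}|\geq kd-2$ carries no $i$-dependent information, and the existence of a matching between $N_{y'}$ and $N_y$ at one edge says nothing about neighbours of $y$ accumulating in $S_{i-1}(x)\cup S_i(x)$ as $i$ grows.

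The missing idea is that Lemma 4.4 of Lin--Lu--Yau (recalled in the paper as Lemma \ref{LLY4.3}) is stated for an \emph{arbitrary pair} of distinct vertices, not just an edge, and already has the factor $1/d(x,y)$ built in: $\kappa_{LLY}(x,y)\leq \bigl(1+(d_y^{x,-}-d_y^{x,+})/d\bigr)/d(x,y)$. Since an edge-wise lower bound $\kappa_{LLY}\geq k$ extends to all pairs (by the triangle inequality for $W_1$, as in Lin--Lu--Yau), a \emph{single} application of this lemma to the pair $(x,y)$ with $d(x,y)=i$ gives $ikd\leq d+d_y^{x,-}-d_y^{x,+}$; combined with $d_y^{x,-}\leq d-d_y^{x,+}$ this yields $d_y^{x,+}\leq (1-ik/2)d$ in one line, with no induction and no matching argument. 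Substituting this, together with $d_y^{x,+}\leq d-\alpha-1$, into your double-counting inequality finishes the proof. So the route you intend to take is the right one, but the step you identify as "the real work" is resolved by reading the key lemma at the correct level of generality rather than by the propagation argument you sketch.
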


We need the following Lemma shown by Lin, Lu and Yau \cite[Lemma 4.4]{LLY11}. Let $x$ be a given vertex. For any $y\in S_i(x)$, $i\geq 1$, we use the following notations:
\[d^{x,+}_y:=|\{z\in S_{i+1}(x)\vert \,yz\in E\}| \,\,\text{and}\,\,d^{x,-}_y:=|\{z\in S_{i-1}(x)\vert\,zy\in E\}|.\]
\begin{lemma}\label{LLY4.3}
    For any two distinct vertices $x$ and $y$, we have $$\kappa_{LLY}(x,y)\leq \frac{1+(d^{x,-}_y-d^{x,+}_y)/d_y}{d(x,y)}.$$
\end{lemma}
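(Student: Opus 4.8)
The plan is to derive the bound directly from the definition of the Lin--Lu--Yau curvature, by testing the Wasserstein distance $W_1(\mu_x^p,\mu_y^p)$ against one carefully chosen $1$-Lipschitz function and then passing to the limit $p\to 1$. Set $i:=d(x,y)$, so that $y\in S_i(x)$, and take as test function the distance to $x$, namely $\phi(z):=d(x,z)$, which is $1$-Lipschitz on $G$.

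First I would record the elementary ``easy half'' of Kantorovich duality, which suffices here and avoids invoking the full duality theorem. For any transport plan $\pi$ from $\mu_x^p$ to $\mu_y^p$, the inequality $d(u,w)\ge\phi(w)-\phi(u)$, summed against $\pi$ and combined with the two marginal constraints, gives
\[W_1(\mu_x^p,\mu_y^p)\ \ge\ \sum_{w}\phi(w)\mu_y^p(w)-\sum_{u}\phi(u)\mu_x^p(u).\]

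Next I would evaluate the two averages. Every neighbor of $x$ lies in $S_1(x)$, so $\sum_u\phi(u)\mu_x^p(u)=p\cdot 0+(1-p)\cdot 1=1-p$. For $\mu_y^p$ I would partition the neighbors of $y$ into those in $S_{i-1}(x)$, $S_i(x)$ and $S_{i+1}(x)$, on which $\phi$ takes the values $i-1$, $i$ and $i+1$ respectively. Writing $d_y=d^{x,-}_y+d^{x,0}_y+d^{x,+}_y$, where $d^{x,0}_y$ counts the neighbors of $y$ lying on the same sphere $S_i(x)$, the bulk term proportional to $i$ collects all of $d_y$ while the offsets from the inner and outer shells combine into $d^{x,+}_y-d^{x,-}_y$, leaving
\[\sum_{w}\phi(w)\mu_y^p(w)=i+\frac{1-p}{d_y}\bigl(d^{x,+}_y-d^{x,-}_y\bigr).\]

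Finally I would combine these two computations with the displayed lower bound on $W_1$ and with $\kappa_p(x,y)=1-W_1(\mu_x^p,\mu_y^p)/i$ to obtain
\[\kappa_p(x,y)\ \le\ \frac{1-p}{i}\left(1+\frac{d^{x,-}_y-d^{x,+}_y}{d_y}\right),\]
after which dividing by $1-p$ and letting $p\to 1$ in $\kappa_{LLY}(x,y)=\lim_{p\to1}\kappa_p(x,y)/(1-p)$ yields the claim, since $i=d(x,y)$. The argument is entirely elementary; the only step needing care is the bookkeeping of the three shells of neighbors of $y$ and the verification that the same-sphere and bulk contributions cancel, so that precisely the forward/backward degree asymmetry survives. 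I do not expect any genuine obstacle beyond this accounting and the routine limit.
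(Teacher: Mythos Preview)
Your argument is correct: the distance function $\phi=d(x,\cdot)$ is $1$-Lipschitz, the ``easy'' direction of Kantorovich duality gives the lower bound on $W_1$, and the shell decomposition of the neighbors of $y$ is carried out accurately, yielding the stated inequality after the limit $p\to 1$. The paper itself does not supply a proof of this lemma but simply quotes it from Lin--Lu--Yau \cite[Lemma~4.4]{LLY11}; your argument is essentially the standard one given there.
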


\begin{proof}[Proof of Theorem \ref{thm:Volume Growth}]
Let $x$ be a given vertex. For any $y \in S_{i}(x)$ with $i\geq 1$, there exists a vertex $u \in S_{i-1}(x)$ such that $uy\in E$, and hence $u$ and $y$ share $\alpha$ common neighbors. These common neighbors can not fall into $S_{i+1}(x)$. Hence, we deduce that 
\begin{equation}\label{eq:d_alpha_1}
    d^{x,+}_y \leq d-\alpha-1. 
\end{equation}

On the other hand, for any $w\in S_{i+1}(x)$ with $i\geq 1$, there exists a vertex $v \in S_{i-1}(x)$ such that $d(v,w)=2$.  The two vertices $v$ and $w$ share $\beta$ common neighbors. These common neighbors all lie in $S_{i}(x)$. Hence we have \begin{equation}\label{eq:indegree_beta}
    d^{x,-}_w \geq \beta.
\end{equation}

Therefore, we estimate from $\sum_{w\in S_{i+1}(x)}d_w^{x,-}=E(S_i(x),S_{i+1}(x))=\sum_{y\in S_i(x)}d_y^{x,+}$ that
\begin{align}\label{volume1}
    |S_{i+1}(x)|\leq \frac{d-\alpha-1}{\beta}|S_{i}(x)|.
\end{align}

 Due to our curvature condition, it follows by Lemma \ref{LLY4.3} that $idk \leq d+d^{x,-}_y-d^{x,+}_y$.
    Since $d^{x,-}_y \leq d-d^{x,+}_y$, we have 
    \begin{equation}\label{eq:outdegree_k}
        d^{x,+}_y \leq \left(1-\frac{ik}{2}\right)d.
    \end{equation}
    Hence, by a similar argument as in \eqref{volume1}, we derive
    \begin{align}\label{volume2}
|S_{i+1}(x)| \leq \frac{\left(1-\frac{ik}{2}\right)d}{\beta}|S_{i}(x)|.
    \end{align}
    Inequalities \eqref{volume1} and \eqref{volume2} together yield the desired result.
\end{proof}

\begin{remark}\label{rmk:Seidel}
From the proof, it is straightforward to show that  Theorem \ref{thm:Volume Growth} holds still for graphs with maximal vertex degree $d$ such that any two adjacent vertices have at least $\alpha$ common neighbors and any two vertices at distance $2$ have at least $\beta$ common neighbors. A volume estimate for this class of graphs with diameter $2$ has been obtained by Seidel  \cite[Page 158]{Seidel} (see also \cite[Proposition 1.4.1]{BCN89}). Our results can be considered as a generalization of Seidel's volume estimate.
\end{remark}

To conclude this subsection, we prove Theorem \ref{thm:volume} from the Introduction. We need the following key lemma due to Terwilliger \cite[Lemma 3.3]{Terwilliger83} about $(s,c,a,k)$-graphs. Below we state a particular case of \cite[Lemma 3.3]{Terwilliger83} that $s=2$, i.e., the case of amply regular graphs, see also \cite[Proposition 1.9.1]{BCN89}.
\begin{lemma}\label{lemma:Terwilliger}
Let $G=(V,E)$ be an amply regular graph with parameters $(n,d,\alpha,\beta)$ such that $1\neq \beta\geq \alpha$. For any $x\in V$ and $w\in S_{i+1}(x)$ with $i\geq 1$, we have
\[d_w^{x,-}\geq \beta+i-1.\]
\end{lemma}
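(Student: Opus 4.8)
The plan is to prove the bound by induction on $i$, establishing along the way the sharper one-step estimate $d_w^{x,-}\ge d_y^{x,-}+1$ for a suitably chosen neighbour $y$ of $w$ in $S_i(x)$. The base case $i=1$ is immediate: if $w\in S_2(x)$ then $\dist(x,w)=2$, so $x$ and $w$ have exactly $\beta$ common neighbours, each of which lies in $S_1(x)\cap S_1(w)$; hence $d_w^{x,-}\ge\beta=\beta+1-1$.

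For the inductive step I fix $w\in S_{i+1}(x)$ with $i\ge 2$, choose any neighbour $y\in S_1(w)\cap S_i(x)$ (such a $y$ exists because $w$ lies at distance $i+1$ from $x$), and set $U:=S_1(y)\cap S_{i-1}(x)$ and $W:=S_1(w)\cap S_i(x)$. Since $y\in S_i(x)=S_{(i-1)+1}(x)$, the induction hypothesis gives $|U|=d_y^{x,-}\ge\beta+i-2$. The goal is to produce an injection $\phi\colon U\to W\setminus\{y\}$, for then $|W|\ge|U|+1\ge\beta+i-1$, as required. The two structural facts I would record first are: (a) every $u\in U$ satisfies $\dist(u,w)=2$, because $\dist(u,w)\le\dist(u,y)+\dist(y,w)=2$ while $\dist(u,w)\ge\dist(x,w)-\dist(x,u)=2$; and (b) the $\beta$ common neighbours $C_u:=S_1(u)\cap S_1(w)$ of $u$ and $w$ all lie in $W$, since any vertex adjacent both to $u\in S_{i-1}(x)$ and to $w\in S_{i+1}(x)$ must sit at distance exactly $i$ from $x$. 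Note $y\in C_u$ for every $u$, so each $C_u\setminus\{y\}$ consists of $\beta-1\ge 1$ elements of $W\setminus\{y\}$ (recall $\beta\ge 2$ under the hypothesis $1\neq\beta\ge\alpha$, as any two vertices at distance $2$ already force $\beta\ge 1$).

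The existence of $\phi$ I would obtain from Hall's marriage theorem applied to the bipartite incidence graph between $U$ and $W\setminus\{y\}$ in which $u$ is joined to the vertices of $C_u\setminus\{y\}$. The main point, and the step I expect to be the real obstacle, is verifying Hall's condition $\bigl|\bigcup_{u\in U'}(C_u\setminus\{y\})\bigr|\ge|U'|$ for every $U'\subseteq U$. I would do this by double counting the edges between $U'$ and $V':=\bigcup_{u\in U'}(C_u\setminus\{y\})$: each $u\in U'$ contributes exactly $\beta-1$ such edges, so their total number is $(\beta-1)|U'|$. For the reverse estimate, fix $v\in V'\subseteq W\setminus\{y\}$; any $u\in U'$ adjacent to $v$ is a common neighbour of $v$ and $y$ lying in $S_{i-1}(x)$. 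Since $v$ and $y$ are distinct vertices both adjacent to $w$, they are at distance $1$ or $2$ and hence share at most $\max\{\alpha,\beta\}=\beta$ common neighbours; crucially $w$ itself is one of these common neighbours and $w\notin S_{i-1}(x)$, so $v$ has at most $\beta-1$ neighbours in $U'$. Comparing the two counts yields $(\beta-1)|U'|\le(\beta-1)|V'|$, i.e. $|V'|\ge|U'|$, which is precisely Hall's condition. This produces the injection, closes the induction, and proves the lemma. The delicate point throughout is the sharpening from $\beta$ to $\beta-1$ obtained by discarding the ``free'' common neighbour $w$, without which the counting falls just short of Hall's condition.
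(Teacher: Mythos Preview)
Your proof is correct. The paper does not give its own argument for this lemma but cites Terwilliger \cite[Lemma 3.3]{Terwilliger83} and \cite[Proposition 1.9.1]{BCN89}; your inductive proof via Hall's theorem, counting common neighbours of $v$ and $y$ and discarding the shared neighbour $w\in S_{i+1}(x)$ to get the crucial degree bound $\beta-1$ on the $W$-side, is essentially the classical argument found in those references.
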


\begin{proof}[Proof of Theorem \ref{thm:volume}]
The proof is done by applying the procedure in the proof for Theorem \ref{thm:Volume Growth} via replacing \eqref{eq:indegree_beta} by 
\begin{equation}\label{eq:replace1}
    d_w^{x,-}\geq \beta+i-1, \,\,\text{for}\,\,w\in S_{i+1}(x)
\end{equation}
and replacing \eqref{eq:outdegree_k} by
\begin{equation}\label{eq:replace2}d_y^{x,+}\leq d-i\left(1+\frac{1}{2}\left\lceil\frac{\alpha(\beta-\alpha)}{\beta-1}\right\rceil\right),\,\,\text{for}\,\,y\in S_i(x).
\end{equation}
Notice that in \eqref{eq:replace1} and \eqref{eq:replace2}, we have applied Lemma \ref{lemma:Terwilliger} and Theorem \ref{thm:beta_alpha_1}.
\end{proof}

In fact, Theorem \ref{thm:volume} provides sharp volume estimate. The equality can be achieved for hypercube graphs. 
\begin{example}\label{ex:hypercube}
    For any $d\geq 2$, the $d$-dimensional hypercube graph $Q^d$ is amply regular with parameters $(2^d,d,0,2)$. Applying Theorem \ref{thm:volume} to $Q^d$ yields the estimates 
    \begin{equation}\label{eq:volume_hypercube}
        |S_{i+1}(x)|\leq \frac{d-i}{i+1}|S_i(x)|,\,\,\text{for}\,\,i\geq 1.
    \end{equation}
     Indeed, we have $|S_i(x)|=\binom{d}{i}$ for $1\leq i\leq \mathrm{diam}(Q^d)=d$. Thus, all the inequalities in \eqref{eq:volume_hypercube} are equalities. Therefore, the estimate for $|G|=\sum_{i\geq 0}|S_i(x)|$ derived from Theorem \ref{thm:volume} is also sharp for $Q^d$.
\end{example}

%%%%%%%%%%%%%%%%%%%%%%%%%%%%%%%%%%%%%%%%%%%%%%%%%%%%%
\section*{Acknowledgement}
  This work is supported by the National Key R \& D Program of China 2020YFA0713100 and the National Natural Science Foundation of China No. 12031017 and No. 12431004. We are very grateful to Shuliang Bai, Jack H. Koolen, Qianqian Yang for discussions on explicit constructions of amply regular graphs, especially in the case $\beta=1$.
%%%%%%%%%%%%%%%%%%%%%%%%%%%%%%%%%%%%%%%%%%%%%%%%%%%%%

%\section*{Appendix}\label{section:appendix}
% In this appendix, we construct a sequence of amply regular graphs $\{G_n\}$ with parameters $\left(3(2^{n+1}-1),4,1,1\right)$ such that ${\rm diam}(G_n)$ tends to infinity.

%We first construct a sequence of graphs $\{H_n\}$. Let $H_0$ be a triangle. For $k\ge 0$, we construct $H_{k+1}$ based on $H_{k}$ as follows: Let $\{v_1,...,v_s\}$ be the set of vertices of degree $2$ in $H_k$. Add $2s$ number of vertices $\{v_{1a},v_{1b},..., v_{sa},v_{sb}\}$. For $1\le i\le s$, connect $v_iv_{ia}$, $v_iv_{ib}$ and $v_{ia}v_{ib}$. Then, we obtain the graph $H_{k+1}$.

%\begin{figure}[H]
%\centering
%\includegraphics[width=\linewidth]{}
%\caption{The construction of $H_0$, $H_1$ and $H_2$ }
%\label{H0H1H2}
%\end{figure}

%Now, we construct $G_n$ based on $H_{n}$ for $n\ge 2$. We can  check by induction that there are $3\times 2^n$ number of vertices of degree $2$ in $H_n$. We label them every $12$ vertices a group as follows $1,2,3,4,1,3,2,4,1,4,2,3,5,6,7,8,5,7,6,8,5,8,6,7,...$(see Figure \ref{G3}). Connecting every two vertices with the same label, we obtain the graph $G_{n}$.

%\begin{figure}[H]
%\centering
%\includegraphics[scale=0.09]{}
%\caption{The construction of $G_3$}
%\label{G3}
%\end{figure}

%We can directly check that, for $n\ge 2$, $G_{n}$ is an amply regular graph with parameters $\left(3(2^{n+1}-1),4,1,1\right)$. Considering the distance between the first labeled vertex and the last labeled vertex, we deduce that the diameter of $G_n$ is $2n+1$.

\end{document}